\newcommand{\rbb}{\mathbb{R}}
\newcommand{\zbb}{\mathbb{Z}}
\newcommand{\W}{\mathcal{W}}
\newcommand{\Pcal}{\mathcal{P}}
\newcommand{\la}{\langle}
\newcommand{\ra}{\rangle}
\renewcommand{\i}{\textup{i}}
\newcommand{\mi}{\wedge}
\renewcommand{\d}{\textup{d}}
\newcommand{\f}{\varphi}
\newcommand{\Fcal}{\mathcal{F}}
\newcommand{\E}{\mathbb{E}}
\renewcommand{\P}{\mathbb{P}}
\newcommand{\Law}{\textup{Law}}
\newcommand{\A}{\mathcal{A}}
\newcommand{\Tcal}{\mathcal{T}}
\newcommand{\TV}{\textup{TV}}
\theoremstyle{plain}
\newtheorem{theorem}{Theorem}[section]
\newtheorem{lemma}[theorem]{Lemma}
\newtheorem{proposition}[theorem]{Proposition}
\newtheorem{definition}[theorem]{Definition}
\theoremstyle{definition}
\newtheorem{remark}[theorem]{Remark}
\numberwithin{equation}{section}
 \title{Exponential mixing for the stochastic Kuramoto-Sivashinsky equation on the 1D torus}
\author{Peng Gao$^1$ and Hung D.~Nguyen$^2$}
\address{$^1$ School of Mathematics and Statistics, and Center for Mathematics
and Interdisciplinary Sciences, Northeast Normal University, Changchun, China}
\address{\hspace{0.15cm} Email: gaopengjilindaxue@126.com}
\address{$^2$ Department of Mathematics, University of Tennessee, Knoxville, Tennessee, USA}
\address{\hspace{0.15cm} Email: hnguye53@utk.edu}
\begin{document}

\begin{abstract}
    In this paper, we study the large-time behaviors of the Kuramoto-Sivashinsky equation (KSE) on the 1D torus while being subjected to random perturbation via additive Gaussian noise. It is well-known that under suitable assumptions on the stochastic forcing, the KSE admits a unique invariant probability measure. In this work, we make further progress on the topic of ergodicity by addressing the issue of convergence rate toward equilibrium. In comparison with the previous results, we can prove that the unique invariant probability measure is exponentially attractive and smallness condition of anti-diffusion coefficient is not necessary here. The proof relies on a coupling argument while making use of Lyapunov functions motivated by those of deterministic equations.
\end{abstract}

\maketitle

%\tableofcontents

%\setcounter{section}{-1}

\section{Introduction} \label{sec:intro}

Letting $L>0$, we are interested in the long-time behaviors of the KSE on the one-dimensional periodic domain $[-L/2,L/2]$, under the impact of random perturbations. The equation is given by  
\begin{align} \label{eqn:KSE}
    \d u(t) + D^2 u(t)\d t +\gamma D^4 u(t)\d t + u(t) Du(t)\d t = \sigma\d W(t),\quad u(0)=u_0,
\end{align}
where $D^m=\frac{\partial^m}{\partial x^m}$, $\gamma>0$ is the diffusion constant, and $\sigma W$ is an additive Gaussian noise that is white in time and colored in space. The classical KSE (without noise) arises naturally in the modeling of phase turbulence in reaction-diffusion system  and plane flame propagation, originally developed independently in the pioneered work of \cite{kuramoto1978diffusion,kuramoto1975formation, kuramoto1976persistent} and \cite{michelson1977nonlinear,sivashinsky1977nonlinear}, respectively. Since then, there has been an extensive literature on both the deterministic and stochastic variations of the KSE (see Section \ref{sec:intro:literature} below). In particular, concerning the large-time asymptotics of equation \eqref{eqn:KSE}, it was established in \cite{weinan2002gibbsian, ferrario2008invariant} that under suitable assumptions on the noise term $\sigma W$, the dynamics admits a unique invariant probability measure in the $L^2-$space of periodic functions, albeit without a convergence speed. More recently, the setting of unbounded domains was explored in the work of \cite{gao2024polynomial}, and it can be shown that the solutions to a fourth order parabolic equation on the whole line are attracted toward equilibrium with power law decaying rate. Our goal of the paper is to extend the unique ergodicity results from \cite{ferrario2008invariant} and to bridge the gap on the mixing rate of \eqref{eqn:KSE} between the torus and the whole space. More specifically, the main theorem of the present article can be summarized as follows:
\begin{theorem} \label{thm:mixing:meta}
    Under appropriate assumptions on the stochastic forcing $\sigma W$, for all $\gamma>0$, equation \eqref{eqn:KSE} admits a unique invariant probability measure $\mu$. Moreover, there exists a positive constant $c$ such that for all initial condition $u_0\in L^2$ and suitable observables $f:L^2\to \rbb$, the following holds:
    \begin{align} \label{ineq:mixing:meta}
       \Big| \E f(u(t;u_0))- \int_{L^2}f(u)\mu(\d u)\Big| \le C e^{-c t},\quad t\ge 0,
    \end{align}
    for some positive constant $C=C(f,u_0)$ independent of $t$.
\end{theorem}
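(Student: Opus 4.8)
The plan is to establish exponential mixing via a coupling argument at the level of the associated Markov semigroup, exploiting the smoothing and dissipative structure of the KSE together with the nondegeneracy of the noise. First I would set up the functional-analytic framework: work in $L^2$ of mean-zero periodic functions (the mean is conserved up to the noise, so one restricts to an appropriate invariant subspace or handles the zero mode separately), and recall from the well-posedness theory that \eqref{eqn:KSE} generates a Feller Markov semigroup $(P_t)_{t\ge 0}$ on $L^2$ with a unique invariant measure $\mu$ by \cite{ferrario2008invariant}. The key quantitative input will be a Lyapunov (Foster--Lyapunov) structure: I would show that there is a function $V(u)$ — the natural candidate being $V(u) = \|u\|_{L^2}^2$ augmented by higher Sobolev norms, or more precisely a quantity motivated by the deterministic energy estimates for the KSE such as $\|u - \langle u\rangle\|^2$ controlled against an explicit background flow — satisfying $P_t V \le e^{-\kappa t} V + C$ for constants $\kappa, C > 0$. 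The crucial point emphasized in the abstract is that this should hold for \emph{all} $\gamma > 0$ without a smallness assumption on the anti-diffusion coefficient; achieving this requires the sharper deterministic Lyapunov functions rather than a crude energy inequality, since the $D^2u$ term pumps energy at low modes and must be absorbed by the $\gamma D^4 u$ term via a careful interpolation/Poincaré-type argument combined with the nonlinear transport term's conservation properties.

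Second, I would establish a \emph{local mixing / small-set} property: on bounded balls $\{V \le R\}$ in $L^2$, the transition kernels $P_t(u_0, \cdot)$ satisfy a minorization (coupling) condition, $\|P_t(u_1,\cdot) - P_t(u_2,\cdot)\|_{\TV} \le 1 - \delta$ (or a $d$-contraction in a suitable Wasserstein-type metric) uniformly for $u_1, u_2$ in the ball. The standard route here is the asymptotic strong Feller property plus irreducibility, or more directly a coupling construction: couple two copies driven by the same noise, show the difference $w = u^{(1)} - u^{(2)}$ obeys a linear equation with coefficients controlled by the solutions, and derive a contraction of $\|w\|$ in expectation using the parabolic smoothing of the $\gamma D^4$ operator — here the $\gamma > 0$ smoothing is essential and the bilinear term $\tfrac12 D(u^{(1)2} - u^{(2)2}) = \tfrac12 D((u^{(1)}+u^{(2)})w)$ is handled by Sobolev embedding in 1D and the a priori bounds. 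If the noise is not elliptic in all directions one would instead combine a synchronous coupling on the high modes (driven by dissipation) with a control argument or Girsanov shift on the finitely many low modes to achieve the minorization; this is the classical Hairer--Mattingly--type scheme.

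Third, I would combine the Lyapunov drift with the local minorization using the Harris theorem (in the form of Hairer--Mattingly or Meyn--Tweedie) to conclude geometric ergodicity: there exist $C, c > 0$ such that $\|P_t(u_0,\cdot) - \mu\|_{V} \le C V(u_0) e^{-ct}$ in the $V$-weighted total variation (or Wasserstein) norm, which immediately yields \eqref{ineq:mixing:meta} for observables $f$ bounded by a multiple of $V$ (or Lipschitz, depending on which metric the contraction is proved in), with $C(f,u_0)$ depending on $\|f\|$ and $V(u_0)$. The main obstacle I anticipate is the second step's \emph{contraction estimate uniform in $\gamma$}: the anti-diffusion $D^2 u$ term makes the linearized dynamics potentially expansive at intermediate frequencies, and closing the coupling/Lyapunov estimates without assuming $\gamma$ large (equivalently, without a smallness condition on the anti-diffusion relative to diffusion) is exactly the technical heart of the paper — it is precisely where the deterministic-equation-motivated Lyapunov functionals, rather than naive $L^2$ energy, must be deployed to extract a genuine dissipation rate that survives for every fixed $\gamma > 0$.
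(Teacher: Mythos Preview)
Your high-level architecture (Lyapunov drift $+$ local coupling/minorization $+$ weak Harris) matches the paper, and you correctly isolate the crux: handling the anti-diffusion $D^2u$ for every $\gamma>0$ via the Collet--Goodman background-shift technique rather than a naive energy estimate. Two points, however, are not quite right. First, you invoke \cite{ferrario2008invariant} for uniqueness of $\mu$; that result requires noise non-degenerate in every Fourier direction, whereas the paper works under the degenerate assumption \eqref{form:sigma.W}--\eqref{cond:W}, so uniqueness must come \emph{a posteriori} from the spectral gap itself, not from Ferrario. Second, and more substantively, your proposed Lyapunov function $V(u)=\|u\|^2_{L^2}$ (even augmented) would not close the argument. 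The pathwise contraction of the coupled pair (the paper's Lemma~\ref{lem:coupling}) produces a random factor $\exp\{c\int_0^t\|D^2u(s)\|_H\,ds\}$, and taking expectation of this forces one to control \emph{exponential} moments of $\int_0^t\|D^2u\|^2_H\,ds$. This is why the paper takes $V(u)=e^{\beta\|u\|^2_H}$ and proves the exponential-moment Lemma~\ref{lem:moment:H:exponential}; a polynomial $V$ is insufficient.

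On the coupling side, your ``Girsanov shift on low modes'' alternative is exactly what is done, but in the Butkovsky--Kulik--Scheutzow \emph{generalized coupling} form rather than Hairer--Mattingly asymptotic strong Feller: one introduces the nudged process \eqref{eqn:KSE:v-coupling} with extra drift $\lambda P_N(u-v)$, obtains a deterministic pathwise contraction for $u-v$, and controls $\W_{\TV}(\mathrm{Law}(v),P_t(v_0,\cdot))$ via Pinsker/Girsanov. The output is a contraction in a weighted Wasserstein distance $\tilde d_{K,\beta}$, not in $V$-weighted total variation. There is one further technical wrinkle you would hit and the paper flags explicitly: the background-shift trick yields a Lyapunov-type inequality of the form $V(u(t))+\int_0^t S \le C_1 V(u_0)+M(t)+Ct$ with $C_1>1$ (in fact $C_1=4$, cf.\ Lemma~\ref{lem:moment:int.|D^2u|^2}), which violates the standard hypothesis $C_1=1$ of \cite{butkovsky2020generalized}. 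The paper has to modify that framework (Lemmas~\ref{lem:A}--\ref{lem:B2} and the generalized triangle inequality~\eqref{ineq:W_(d_K,beta):triangle} with $\theta_{\beta}$ versus $\theta_{\beta/2}$) to accommodate this; your outline does not anticipate this obstruction.
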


We refer the reader to Theorem \ref{thm:geometric-ergodicity} for a precise statement of Theorem \ref{thm:mixing:meta}. Before diving into the methodology used to establish Theorem \ref{thm:mixing:meta}, in Section \ref{sec:intro:literature} below, we will briefly review the related literature of equation \eqref{eqn:KSE}.

\subsection{Literature review} \label{sec:intro:literature}
Since its introduction in the 1970s, the KSE has been instrumental in the connection between PDE and dynamical systems. The well-posedness of the deterministic equation was established in the work of \cite{nicolaenko1985some,tadmor1986well}. It is also known that the KSE admits finitely many determining modes and a finite-dimensional global attractor \cite{nicolaenko1985some}, which is contained in an inertial manifold characterizing the long-time behavior of the solutions \cite{foias1986inertial}. Furthermore, while numerical evidences in \cite{fantuzzi2015construction,goluskin2019bounds,wittenberg1999scale} indicate that $\limsup_{t\to\infty}\|u(t)\|_{L^2}$ is insensitive to the length of the spatial intervals, justifying this limit rigorously remains an open challenge in the study of the asymptotic behaviors of the KSE. Analytical results in this direction appeared as early as in the work of \cite{nicolaenko1985some} for a special class of solutions. The technique of \cite{nicolaenko1985some} was then refined in \cite{collet1993global,goodman1994stability} to cover arbitrary solutions. Recent progress on optimal bounds dependent on $L$ has been made in \cite{giacomelli2005new,goldman2015new,otto2009optimal}. In particular, we note that the framework from \cite{collet1993global,goodman1994stability} motivates the construction of Lyapunov functions, which are one of the main crucial ingredients of proving Theorem \ref{thm:mixing:meta} in the present article. See Section \ref{sec:intro:methodology} for a further discussion of this point. It is important to note that in spite of an extensive development for the 1D KSE, the global well-posedness in higher dimensions is still not resolved completely \cite{larios2024remarks}.

The KSE under random perturbations has also received a lot of attention during the last several decades. Under different assumptions on the stochastic forcing, the well-posedness of \eqref{eqn:KSE} was investigated in \cite{cuerno1995renormalization,duan2001stochastic, ferrario2008invariant,karma1993competition}. On the one hand, there are many work exploring the solutions' properties in finite time windows. To mention a few examples, we refer the reader to the study of the associated Kolmogorov equation \cite{yang2012kolmogorov}, the controllability \cite{gao2018control,gao2020control}, the absolute continuity of the solutions' laws \cite{ferrario2008absolute}, the averaging principle \cite{gao2018averaging}, the large deviation with multiplicative noise \cite{rose2021large}, and the moderate deviation \cite{rose2022moderate}. On the other hand, as discussed elsewhere in \cite{ferrario2008invariant,kuramoto1978diffusion}, while chaotic behaviors are observed numerically temporarily, the presence of noise induces large-time stability through statistically steady states, e.g., the existence of random attractors \cite{duan1998dynamics,yang2006random,yang2007dynamics} and invariant probability measures \cite{ferrario2008invariant,gao2022irreducibility, gao2024polynomial}. 
In particular, the unique ergodicity of \eqref{eqn:KSE} was established as early as in \cite{weinan2002gibbsian} by adopting a strategy of \cite{weinan2001gibbsian}, in which a Gibbsian dynamics
for the noisy low-mode evolution is considered and whose high-mode dynamics is entirely characterized
by the historical behavior of the low modes. In addition, the work of \cite{ferrario2008invariant} studied ergodicity making use of the strong Feller property, which captures the instantaneous smoothing property of the corresponding Markov semigroup. We remark that this crucial fact relies on non-degenerate noise, i.e., the stochastic forcing is excited in every direction of the Fourier space. More recently, under the assumption that only a finite number of Fourier modes is randomly perturbed, the irreducibility of \eqref{eqn:KSE} was proven in \cite{gao2022irreducibility}. 
We note that despite a similarity between the noise structure of equation \eqref{eqn:KSE} and those in \cite{gao2024polynomial}, the main difference is that we restrict the equation to the torus, which allows us to construct strong Lyapunov functions, ultimately deducing the exponential mixing. In what follows, we will provide a brief description of the methodology used to formulate Theorem \ref{thm:mixing:meta}.

\subsection{Methodology in this work} \label{sec:intro:methodology} 

Turning back to equation \eqref{eqn:KSE}, we follow closely the coupling framework of \cite{butkovsky2014subgeometric,butkovsky2020generalized,hairer2011asymptotic,kulik2017ergodic,kulik2018generalized} to establish the exponential convergence rate stated in Theorem \ref{thm:mixing:meta}. The argument essentially consists of three main ingredients: a Lyapunov function, the contracting property of the Markov semigroup with respect to a distance-like function $d$, and the $d$-small property of bounded sets, cf. Definition \ref{def:contracting}. Notably, owing to the presence of the anti diffusion term $D^2$, the derivation of Lyapunov function is the main challenge that we face to the extend that it induces further complication when we proceed to derive contracting properties. More specifically, as typically found in other stochastic dynamics presented in \cite{butkovsky2020generalized, hairer2011asymptotic,kulik2018generalized}, one seeks to obtain an estimate of the form
\begin{align} \label{ineq:Lyapunov:meta}
     V(u(t)) + \int_0^t S(u(\ell)) \d \ell \le V(u(0))+ M(t)+ Ct, 
\end{align}
where $V(u)$ and $S(u)$ are energy-like functionals of the solution $u$, $M(t)$ is a semi Martingale process whose quadratic variation is controlled by $S(u)$, and $C$ is a positive constant independent of time $t$ and the initial condition. However, since the linear operator $D^2+\gamma D^4$ is not guaranteed to be strictly positive for arbitrary value of $\gamma$, a simple calculation on the torus $[-L/2,L/2]$ only produces 
\begin{align*}
    \frac{1}{2}\d \|u\|^2_{L^2}-\|Du\|^2_{L^2}\d t+\gamma\|D^2 u\|^2_{L^2}\d t=C\d t+ \d M .
\end{align*}
While this is sufficient to conclude the well-posedness, it does not provide the dissipative effect as in \eqref{ineq:Lyapunov:meta}, which is needed to deduce the mixing rate. To overcome the difficulty, we resort to the technique developed in \cite{collet1993global,goodman1994stability} dealing with the same issue for deterministic settings. As it turns out, one can shift the solution $u(t)$ by a periodic function $\varphi(t)$ (possibly depending on $u(t)$) to arrive at
\begin{align} \label{ineq:Lyapunov:meta:phi}
     V(u(t)-\f(t)) + \int_0^t S(u(\ell)-\f(\ell)) \d \ell \le V(u(0))+ M(t)+ Ct.
\end{align}
We note that \eqref{ineq:Lyapunov:meta:phi} is possible thanks to the torus domain together with the nonlinear structure and the diffusion $\gamma$ that we are able to leverage to handle the $D^2$ term. 
Upon using Cauchy inequality and periodicity, we may recover from \eqref{ineq:Lyapunov:meta:phi} a \eqref{ineq:Lyapunov:meta}-like estimate of the form
\begin{align*} 
     V(u(t)) + \int_0^t S(u(\ell)) \d \ell \le C_1 V(u(0))+ M(t)+ Ct,
\end{align*}
where a new constant $C_1$ now appears in front of the term $V(u(0))$. Unlike the framework of \cite{butkovsky2020generalized} requiring $C_1=1$, cf. \eqref{ineq:Lyapunov:meta}, in our setting, $C_1$ may be large and thus does not satisfy the criterion of Lyapunov function and contracting properties in \cite{butkovsky2020generalized}. To this end, we tackle the issue by modifying the argument of \cite{butkovsky2020generalized} and leveraging delicate exponential moment bounds. This allows us to successfully achieve the existence of a spectral gap with respect to a suitable Wasserstein distance. This is precisely stated in Theorem \ref{thm:geometric-ergodicity}, whose result implies Theorem \ref{thm:mixing:meta}. As mentioned in Section \ref{sec:intro:literature}, in comparison with previous literature, although we restrict equation \eqref{eqn:KSE} to the torus, we are able to establish the exponential mixing rate, which was not addressed in \cite{ferrario2008invariant}. All of this will be explained in detail in Section \ref{sec:mixing} where we carry out the proof of exponential mixing.

The rest of the paper is organized as follows: in Section \ref{sec:results}, we introduce the conditions on the noise structure and the functional settings of periodic domains that are needed for the analysis. We also state the main result on exponential mixing of \eqref{eqn:KSE} through Theorem \ref{thm:geometric-ergodicity}. In Section \ref{sec:moment-estimate}, we collect useful moment bounds which play the role of Lyapunov functions for \eqref{eqn:KSE}. In Section \ref{sec:mixing}, we present the proof of Theorem \ref{thm:geometric-ergodicity} in details making use of the estimates from Section \ref{sec:moment-estimate}. The paper concludes with two appendices: in Appendix \ref{sec:periodic_functions}, we consider periodic functions and establish crucial inequalities that are employed to construct Lyapunov functions for \eqref{eqn:KSE}. In Appendix \ref{sec:aux}, we collect useful estimates that are used to prove the main theorem.

\section{Main result} \label{sec:results}

\subsection{Functional settings} \label{sec:results:functional-setting}

Following \cite{collet1993global,ferrario2008invariant}, we consider the space $\Pcal_L$ of real $L$-periodic functions with vanishing integral, namely,
\begin{align*}
\Pcal_L=\Big\{u: u(x+L)=u(x),\,\int_{-L/2}^{L/2}u(x)\d x=0  \Big\}.
\end{align*}
Recall that given $u\in \Pcal_L$, we may formally recast $u$ using Fourier series representation 
\begin{align*}
    u=\frac{1}{\sqrt{L}}\sum_{k\in \zbb\setminus\{0\}}u_k e^{i\frac{2\pi k}{L}x},
\end{align*}
with $u_{-k}=\overline{u_k}$ (so that $u$ is real-valued). 

For every $m\ge0$, we denote by $H^m_L$ the Sobolev space of $L$-periodic functions endowed with the inner product
\begin{align*}
    \la u,v\ra_{H^m_L} = \Big(\frac{2\pi}{L}\Big)^{2m}\!\!\!\sum_{k\in \zbb\setminus\{0\}}|k|^{2m}u_kv_k,
\end{align*}
and the norm
\begin{align*}
    \|u\|^2_{H^m_L} = \Big(\frac{2\pi}{L}\Big)^{2m}\!\!\!\sum_{k\in \zbb\setminus\{0\}}|k|^{2m}|u_k|^2.
\end{align*}
That is,
\begin{align*}
H^m_L =\Big\{ u\in \Pcal_L: \|u\|_{H^m_L}<\infty  \Big\}.
\end{align*}
We note that when $m$ is a non-negative integer, by Parseval's identity, it holds that
\begin{align*}
    \la D^m u,D^m v\ra_{H^0_L} = \la u,v\ra_{H^m_L}.
\end{align*} 
Let $\{e_k\}_{k\ge 1}$ be an orthonormal basis in $H^0_L$ that diagonalizes the Laplacian, i.e.,
\begin{align} \label{eqn:D^2e_k=-alpha_ke_k}
    D^2e_k = -\alpha_k e_k,
\end{align}
where $\alpha_k\ge 0$ is diverging to infinity. For $N\ge 1$, we denote by $P_N$ the projection of $H^0_L$ onto span$\{e_1,\dots,e_N\}$, namely,
\begin{align*}
P_Nu = \sum_{k=1}^N \la u,e_k\ra_{H_L}e_k.
\end{align*}
If there is no confusion about the $L$-periodicity, we will drop the subscript $L$ and simply denote $H^m:=H^m_L$. Particularly, we will  use the notation $H:=H^0$.

Next, we turn to the stochastic forcing $\sigma W$ that appears on the right-hand side of \eqref{eqn:KSE}. We assume that $\sigma W$ is degenerate in the sense that it has the representation
\begin{align} \label{form:sigma.W}
    \sigma W(t) = \sum_{k=1}^M \sigma_k B_k(t),
\end{align}
where $M$ is a positive integer, $\{\sigma_k\}_{k=1,\dots,M}$ is a sequence of elements in $H$ and $\{B_k\}_{k=1,\dots,M}$ is a sequence of i.i.d. one-dimensional standard Brownian motions, each defined on the same stochastic basis $(\Omega,\Fcal,(\Fcal_t)_{t\ge 0},\P)$ satisfying the usual conditions \cite{karatzas2012brownian}. Following the framework of \cite{butkovsky2020generalized,glatt2021long,glatt2017unique, kulik2018generalized}, we may regard $\sigma$ as a linear bounded map from $\rbb^M$ to $H$, and fix a maximal integer $N$ such that
\begin{align} \label{cond:W}
    P_NH\subset \textup{Range}(\sigma) = \textup{Span}\{\sigma_1,\dots,\sigma_M\}.
\end{align}
It follows from the choice of $N$ that the inverse map $\sigma^{-1}:P_NH\to\rbb^M$ is also a bounded operator.

Having introduced the functional settings needed for the analysis, we briefly discuss the well-posedness of \eqref{eqn:KSE}. More specifically, we record the following result in \cite{ferrario2008invariant} giving the existence and uniqueness of weak solutions.
\begin{proposition}{\cite[Theorem 3.4]{ferrario2008invariant}} \label{prop:well-posed}
    Let $\sigma W$ be given as in \eqref{form:sigma.W}. Then, for all $\gamma>0$ and $u_0\in H$, the following holds:

1. There exists a unique stochastic process $u(t)=u(t;u_0)$ such that $u\in C([0,\infty);H)$ is $\Fcal_t$-adapted and that for every $T>0$ and $v\in H^2$, $\P$-a.s.
\begin{align*}
    \la u(t),v\ra_{H} &= \int_0^t\la Du(s),Dv\ra_{H}-\gamma\la D^2u(s),D^2v\ra_{H}+\frac{1}{2}\la u(s)^2,Dv\ra_{H}\d s\\
    &\qquad +\la u_0,v\ra_{H}+ \int_0^t \la v,\sigma\d W(s)\ra_{H},\quad \textup{a.e. }\, t\in[0,T].
\end{align*}

2. The solution $u(t)$ is continuous with respect to initial condition. That is, for all $t\ge 0$,
\begin{align*}
    \E\|u(t;u_0^n)-u(t;u_0)\|^2_{H}\to 0,\quad \textup{as }\,n\to\infty,
\end{align*}
whenever $\|u_0^n-u_0\|_{H}\to 0$, as $n\to\infty$.
\end{proposition}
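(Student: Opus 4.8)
The plan is to construct the solution by Galerkin truncation together with uniform a priori energy estimates, and then to obtain uniqueness and continuity in the initial datum from a difference estimate. Write $A=D^2+\gamma D^4$ for the linear part; on $\Pcal_L$ its Fourier symbol is $\big(\tfrac{2\pi k}{L}\big)^4\big(\gamma-\big(\tfrac{L}{2\pi k}\big)^2\big)$, so $-A$ is bounded above, generates an analytic semigroup on $H$, and $-A+c$ is strictly positive for $c$ large enough. For $N\ge 1$ let $P_N$ be the Fourier projection and consider the finite-dimensional SDE
\[
\d u^N + P_N A u^N\,\d t + P_N\big(u^N D u^N\big)\,\d t = P_N\sigma\,\d W, \qquad u^N(0)=P_N u_0,
\]
whose drift is a polynomial in the Fourier coefficients and hence locally Lipschitz, so local strong solutions exist up to a possible explosion time.

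Global existence and the passage to the limit both rest on the $L^2$ energy estimate, uniform in $N$. Applying It\^o's formula to $\|u^N\|_H^2$ and using the cancellation $\langle u^N D u^N, u^N\rangle_H=\tfrac13\int_{-L/2}^{L/2}D\big((u^N)^3\big)\,\d x=0$, valid by periodicity (and unaffected by $P_N$ since $u^N\in P_NH$), one gets
\[
\tfrac12\,\d\|u^N\|_H^2+\gamma\|D^2u^N\|_H^2\,\d t=\|Du^N\|_H^2\,\d t+\tfrac12\textstyle\sum_{k=1}^M\|P_N\sigma_k\|_H^2\,\d t+\langle u^N,\sigma\,\d W\rangle_H.
\]
The anti-diffusion term is absorbed via the interpolation (Poincar\'e) inequality on $\Pcal_L$, $\|Du^N\|_H^2\le\tfrac{\gamma}{2}\|D^2u^N\|_H^2+C_\gamma\|u^N\|_H^2$, and then Gronwall together with the Burkholder--Davis--Gundy inequality yields $\E\sup_{t\le T}\|u^N(t)\|_H^2+\E\int_0^T\|D^2u^N\|_H^2\,\d t\le C(T,\|u_0\|_H)$ uniformly in $N$; in particular no explosion occurs. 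Combining this with a bound on $\partial_t u^N$ in a negative-order space (from the equation, using $\|u^N D u^N\|_{H^{-2}}\lesssim\|u^N\|_{L^4}^2\lesssim\|u^N\|_H\|u^N\|_{H^1}$ by Gagliardo--Nirenberg in 1D), one extracts a subsequence converging weakly in $L^2(\Omega;L^2([0,T];H^2))$ and, through the Aubin--Lions--Simon lemma (together with a Skorokhod/Gy\"ongy--Krylov argument to upgrade to convergence in probability on the original probability space), strongly in $L^2([0,T];H^1)$. Strong $H^1$-convergence passes to the limit in the quadratic term $u^N D u^N$, the stochastic integral passes by the It\^o isometry since $P_N\sigma\to\sigma$, and the limit is a solution in the sense of Proposition \ref{prop:well-posed}; the path regularity $u\in C([0,\infty);H)$ follows by splitting $u=z+v$ with $z$ the (spatially smooth, finite-rank) stochastic convolution and $v$ the remainder, for which parabolic regularity gives continuity in $H$.

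For uniqueness and continuity, let $u_1,u_2$ be two solutions (with data $u_0^1,u_0^2$) and $w=u_1-u_2$. Since the noise is additive it drops out of the equation for $w$, and applying the chain rule to $\|w\|_H^2$, using $\langle u_2 Dw, w\rangle_H=-\tfrac12\langle Du_2, w^2\rangle_H$, gives
\[
\tfrac12\ddt\|w\|_H^2+\gamma\|D^2w\|_H^2=\|Dw\|_H^2-\langle w Du_1, w\rangle_H+\tfrac12\langle Du_2, w^2\rangle_H.
\]
Bounding the right-hand side by the 1D Agmon inequality $\|w\|_{L^\infty}\lesssim\|w\|_H^{1/2}\|w\|_{H^1}^{1/2}$ and interpolation, it is at most $\tfrac{\gamma}{2}\|D^2w\|_H^2+g(t)\|w\|_H^2$ with $g$ locally integrable and controlled by $\|u_1\|_{H^1},\|u_2\|_{H^1}$, so Gronwall yields $\|w(t)\|_H^2\le\|w(0)\|_H^2\exp\!\int_0^t g$. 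This gives pathwise uniqueness, and after a stopping-time localization to control $\int g$ plus a uniform integrability argument using the moment bounds above, the $L^2$-continuity claim in part 2.

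The main obstacle is precisely the anti-diffusion term $D^2$: it has the sign of a backward heat operator and is what rules out a one-line energy estimate, forcing the interpolation inequality on $\Pcal_L$ to absorb it into the $\gamma D^4$ dissipation. For well-posedness this costs only a $\gamma$-dependent constant and works for every $\gamma>0$; the genuine difficulty it creates — a large constant in front of $V(u(0))$ in the resulting Lyapunov-type bound — is the theme of the rest of the paper and matters only for the mixing rate, not for Proposition \ref{prop:well-posed} itself.
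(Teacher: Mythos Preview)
Your argument is correct, but it follows a different route from the one the paper outlines. The paper (following Ferrario) first splits off the stochastic part: it introduces the linear process $z_a$ solving $\d z + D^2 z\,\d t + \gamma D^4 z\,\d t + a z\,\d t = \sigma\,\d W$ with an auxiliary damping $a>0$, obtains $z_a\in C([0,\infty);H)$ via the factorization method, and then observes that $v=u-z_a$ solves a \emph{random} but pathwise deterministic PDE. The Galerkin approximation is then run on this deterministic equation for $v$, so compactness is purely deterministic (ordinary Aubin--Lions, $\omega$ by $\omega$), and no Skorokhod or Gy\"ongy--Krylov machinery is needed. Your approach instead applies Galerkin directly to the full SPDE and extracts convergence through probabilistic compactness; you only invoke the splitting $u=z+v$ at the very end to get $C([0,\infty);H)$ regularity. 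Both strategies are standard and both work here thanks to the additive, finite-rank noise; the paper's version is lighter precisely because the additive structure lets one remove the stochastic integral before taking limits, whereas yours is more robust in principle (it would adapt more readily to multiplicative noise). Your treatment of uniqueness and continuous dependence via the difference estimate with Gronwall and stopping-time localization is essentially the same as what the splitting approach would produce and is correct.
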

The proof of Proposition \ref{prop:well-posed} can be carried out by employing an argument typically found in the settings of degenerate additive noise \cite{ ferrario2008invariant,flandoli1994dissipativity, glatt2021long}. Namely, for $a>0$, we consider the linear equation 
\begin{align*}
    \d z(t) + D^2 z(t)\d t +\gamma D^4 z(t)\d t + az(t)\d t = \sigma \d W(t),\quad z(0)=u_0.
\end{align*}
In the above, $a>0$ can be regarded as an auxiliary damping constant. Using the factorization method \cite{da2014stochastic}, it can be shown that for all $a$ sufficiently large, $z_a$ is a well-defined stochastic process taking values in $C([0,\infty);H)$. Then, by subtracting $z_a$ away from $u$, observe that the difference $v=u-z_a$ satisfies the following random PDE
\begin{align*}
    \frac{\d}{\d t} v(t) + D^2 v(t) +\gamma D^4 v(t) + (v(t)+z_a(t)\big) D\big(v(t)+z_a(t)\big) - az(t) = 0,\quad v(0)=0.
\end{align*}
Then, the well-posedness of $v$ can be derived using a classical Galerkin approach together with appropriate a priori bounds., Then, upon recovering $u=v+z_a$, we obtain the solutions for \eqref{eqn:KSE}, as well as the continuity with respect to initial conditions. We refer the reader to \cite[Section 3]{ferrario2008invariant} for a more detailed proof of Proposition \ref{prop:well-posed}.

As a consequence of the well-posedness result, we may introduce the Markov transition probabilities given by
\begin{align*}
    P_t(u_0,A) = \P(u(t;u_0)\in A),
\end{align*}
which are well-defined for all $u_0\in H$ and Borel set $A\subset H$. The associated Markov semigroup $P_t$ given by
\begin{align}
    P_t f(u_0) = \E f(u(t;u_0)),
\end{align}
is a mapping from $\mathcal{B}_b (H)$ to $\mathcal{B}_b(H)$ where $\mathcal{B}_b(H)$ denotes the class of bounded Borel measurable functions $f:H\to\rbb$. We note that thanks to the continuity with respect to initial data, cf. Proposition \ref{prop:well-posed} part 2, $P_t$ has the Feller property. That is, $P_tf\in C_b(H)$ whenever $ f\in C_b(H)$ where $C_b(H)$ denotes the set of bounded continuous functions $f:H\to\rbb$. 

Letting $\nu$ be an element in $\Pcal r(H)$, the collection of probability measures on $H$, the action of $P_t$ on $\nu$ is defined as
\begin{align*}
    P_t\nu(A) = \int_{H} P_t(u,A)\nu(\d u).
\end{align*}
We recall that $\nu$ is said to be invariant for $P_t$ if for all $t\ge 0$,
\begin{align*}
    P_t\nu=\nu.
\end{align*}
In the work of \cite{ferrario2008invariant}, under the hypothesis of non-degenerate stochastic forcing, i.e., noise is actively excited in every Fourier direction, it can be shown that $P_t$ admits a unique invariant measure \cite[Theorem 7.1]{ferrario2008invariant}. In particular, while the existence result relies on the usual Krylov-Bogoliubov procedure, the uniqueness result follows from establishing the irreducibility and the strong Feller properties, which relies heavily on the non-degeneracy nature of the noise.

\subsection{Exponential mixing} \label{sec:results:Exponential mixing} We now turn to the main topic of the paper on the exponential mixing of $P_t$ in the presence of degenerate noise as in \eqref{form:sigma.W}-\eqref{cond:W}. Following the framework of \cite{butkovsky2020generalized, hairer2011asymptotic}, we recall that a function $d:H\times H\to [0,\infty)$ is called \emph{distance-like} if it is symmetric, lower semicontinuous, and $d(u,v)=0$ if and only if $u=v$, cf. \cite[Definition 4.3]{hairer2011asymptotic}. The Wasserstein distance on $\Pcal r (H)$ associated with $d$ is denoted by $\W_d$ given by
\begin{align} \label{form:W}
    \W_d(\nu_1,\nu_2) = \inf \E d(X_1,X_2),
\end{align}
where the infimum runs over all pairs of random variables $(X_1,X_2)$ such that $X_1\sim\nu_1$ and $X_2\sim \nu_2$. The most well-known example of a distance-like function is the discrete metric $d(u,v) = \mathbf{1}\{u\neq v\}$, which corresponds to the total variation Wasserstein distance, denoted by $\W_{\TV}$. The second distance-like function that will also be helpful for the analysis is defined for 
$K,\beta>0$
\begin{align} \label{form:d_K}
    d_{K,\beta}(u,v) = K\theta_\beta(u,v) \mi K\theta_\beta(v,u) \mi 1.
\end{align}
where the function $\theta_\beta:H \times H \to[0,\infty)$ is given by
\begin{align} \label{form:theta(u,v)}
    \theta_\beta(u,v) = \|u-v\|_H e^{\beta \|u\|^2_H}.
\end{align}
 The distance-like function that we will actually employ to measure the convergence rate toward equilibrium is defined for $K,\beta>0$
 \begin{align} \label{form:dtilde_K,beta}
\tilde{d}_{K,\beta}(u,v)=\sqrt{d_{K,\beta}(u,v)\big(1+e^{\beta\|u\|^2_H}+e^{\beta\|v\|^2_H}\big)}
 \end{align}

 We now state the main result of the paper that gives the exponential mixing of $P_t$.

\begin{theorem} \label{thm:geometric-ergodicity}
    Let $\tilde{d}_{K,\beta}$ be the distance-like function defined in \eqref{form:dtilde_K,beta} and $N$ be the constant from expression \eqref{cond:W}. Then, for all $\beta$ sufficiently small, there exist positive constants $K$ and $N$ sufficiently large such that the following holds
    \begin{align}\label{ineq:geometric-ergodicity}
        \W_{\tilde{d}_{K,\beta}}(P_t\nu_1,P_t\nu_2) \le Ce^{-ct}\W_{\tilde{d}_{K,\beta}}(\nu_1,\nu_2),\quad t\ge T,\, \nu_1,\,\nu_2\in \Pcal r(H),
    \end{align}
    for some positive constants $C = c(\gamma,N,K,\beta),c=c(\gamma,N,K,\beta)$ and $T=T(\gamma,N,K,\beta)$ independent of $t$, $\nu_1$, and $\nu_2$. In the above, $\W_{\tilde{d}_{K,\beta}}$ is the Wasserstein distance associated with $\tilde{d}_{K,\beta}$ defined in \eqref{form:dtilde_K,beta}.
\end{theorem}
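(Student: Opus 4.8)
The plan is to establish \eqref{ineq:geometric-ergodicity} via the generalized coupling (weak Harris) scheme of \cite{butkovsky2020generalized,hairer2011asymptotic,kulik2018generalized}, modified to tolerate the loss of the normalization $C_1=1$ flagged in Section~\ref{sec:intro:methodology}. Three ingredients are needed: a Lyapunov structure for the exponential moment $V(u)=e^{\beta\|u\|_H^2}$; a one-step contraction of the Markov kernel in the Wasserstein distance $\W_{d_{K,\beta}}$ on sublevel sets of $V$; and the $d_{K,\beta}$-smallness of those sublevel sets.

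\emph{Step 1: the Lyapunov function.} Starting from the shifted energy estimate~\eqref{ineq:Lyapunov:meta:phi} (made rigorous in Section~\ref{sec:moment-estimate}) and using coercivity of $S$, periodicity, and Cauchy's inequality, I would derive that, for a horizon $t_\ast$ fixed large enough and all sufficiently small $\beta>0$,
\begin{align*}
    \E\big[e^{\beta\|u(t_\ast;u_0)\|_H^2}\big]\le \kappa\, e^{\beta\|u_0\|_H^2}+b,\qquad \kappa\in(0,1),\ b<\infty .
\end{align*}
The subtlety is that the bad constant $C_1>1$ ruins this for small $t$; only for $t_\ast$ large does the dissipation from $\gamma D^4$ (strictly dominant on high Fourier modes) overcome $C_1$. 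Exponential moments are then obtained by exponentiating the martingale term in~\eqref{ineq:Lyapunov:meta:phi}, whose quadratic variation is dominated by $S$.

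\emph{Step 2: coupling and one-step contraction.} For $u_1,u_2$ in a sublevel set $\{V\le R\}$, I would couple $P_{t_\ast}\delta_{u_1}$ and $P_{t_\ast}\delta_{u_2}$ by running one copy $u$ with the given noise and the other copy $\tilde u$ with the noise modified on the first $N$ Fourier modes by a feedback control steering $P_N(u-\tilde u)\to 0$ exponentially. Since $P_NH\subset\textup{Range}(\sigma)$ and $\sigma^{-1}$ is bounded on $P_NH$ by~\eqref{cond:W}, this control is admissible and its Girsanov cost is controlled using only the exponential moments of Step~1. On the orthogonal complement $Q_N=I-P_N$, the difference $w=Q_N(u-\tilde u)$ obeys a dissipative equation because the symbol $\big(\tfrac{2\pi|k|}{L}\big)^2\big(\gamma\big(\tfrac{2\pi|k|}{L}\big)^2-1\big)$ of $D^2+\gamma D^4$ is strictly positive for $|k|$ large; a Foias--Prodi-type estimate then forces $\|w(t)\|_H\to 0$, the nonlinearity being absorbed by the low-frequency energy, which stays bounded on $\{V\le R\}$. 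Combining the two regimes gives $\|u(t)-\tilde u(t)\|_H\to 0$ a.s.\ on an event of probability bounded below; taking $K$ large then yields $\W_{d_{K,\beta}}(P_{t_\ast}\delta_{u_1},P_{t_\ast}\delta_{u_2})\le\alpha\, d_{K,\beta}(u_1,u_2)$ with $\alpha\in(0,1)$, together with the $d_{K,\beta}$-smallness of $\{V\le R\}$.

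\emph{Step 3: passage to a spectral gap.} Because $C_1\neq 1$, one cannot quote the contraction theorem of \cite{butkovsky2020generalized} directly; instead I would rerun its proof with the weighted distance $\tilde d_{K,\beta}$ from~\eqref{form:dtilde_K,beta}. A Cauchy--Schwarz argument, splitting according to whether $(u_1,u_2)\in\{V\le R\}^2$ (apply Step~2) or not (use the Lyapunov decay of Step~1 to shrink the weight $1+e^{\beta\|u\|_H^2}+e^{\beta\|v\|_H^2}$), shows that $P_{t_\ast}$ contracts $\W_{\tilde d_{K,\beta}}$ by a factor $\theta<1$ once $R$ is taken large enough; iterating over multiples of $t_\ast$ and interpolating over $t\in[nt_\ast,(n+1)t_\ast]$ via the $P_t$-contractivity of $\W_{\tilde d_{K,\beta}}$ produces~\eqref{ineq:geometric-ergodicity}. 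The hard part will be Step~2: making the Foias--Prodi/nudging mechanism work despite the anti-diffusion $D^2$ while keeping the Girsanov cost bounded using only exponential moments, all under the double constraint that $N$ be large enough both for high-mode dissipativity and for $\sigma^{-1}$ to be bounded on $P_NH$.
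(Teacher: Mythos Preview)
Your three-step plan is essentially the paper's: exponential Lyapunov via the shifted energy (Lemmas~\ref{lem:moment:H:exponential} and~\ref{lem:Lyapunov}), nudged coupling on $P_NH$ with Girsanov control (Lemmas~\ref{lem:coupling}--\ref{lem:error-in-law}), and then the weak Harris machinery. Two points deserve sharpening.

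First, you locate the $C_1\neq 1$ obstruction in Step~3, but in the paper it arises earlier, in establishing the contracting and $d$-small properties themselves: the defect prevents verifying \cite[Assumption~H2]{butkovsky2020generalized}, so the paper replaces \cite[Lemma~4.3]{butkovsky2020generalized} by direct exponential-moment arguments (Lemmas~\ref{lem:A}--\ref{lem:B2}, see Remark~\ref{rem:A-B2}); once Lemmas~\ref{lem:Lyapunov}--\ref{lem:d-small} are in hand, \cite[Theorem~4.8]{hairer2011asymptotic} applies verbatim and no rerunning is needed. Note also that the contracting property must be proved for all pairs with $d_{K,\beta}(u,v)<1$, not merely on a sublevel set $\{V\le R\}$, and this is what forces the $\theta_{\beta/2}$ (rather than $\theta_\beta$) on the right of~\eqref{ineq:A:theta(v,u)<e^(-ct)theta(u_0,v_0)} in combination with the generalized triangle inequality~\eqref{ineq:W_(d_K,beta):triangle}.

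Second, the nonlinearity in the Foias--Prodi estimate is not ``absorbed by low-frequency energy bounded on $\{V\le R\}$''. In Lemma~\ref{lem:coupling} it contributes $\int_0^t\|Du\|_{L^\infty}\,ds\le\sqrt{L}\int_0^t\|D^2u\|_H\,ds$ to the exponent in~\eqref{ineq:|u-v|^2<|u_0-v_0|^2.e^(-ct+int.|Du|_infty)}; this quantity grows in $t$ and is controlled only in expectation via the exponential moment~\eqref{ineq:moment:H:exponential:int_0^t|D^2u|}, which itself grows like $e^{\beta C_0 t+4\beta\|u_0\|_H^2}$ and must be beaten by taking $N$ (hence the dissipation rate $C_2$) large enough.
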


 In order to establish Theorem \ref{thm:geometric-ergodicity}, we will draw upon the framework of generalized coupling from \cite{butkovsky2020generalized} tailored to our setting. The first two main ingredients of the proof include proving that $d_{K,\beta}$ is contracting for $P_t$ and that bounded sets are $d_{K,\beta}$-small. See Definition \ref{def:contracting} below. Combining them with suitable Lyapunov bounds will then allow us to ultimately produce the spectral gap \eqref{ineq:geometric-ergodicity}. As mentioned in the introduction, owing to the presence of the anti-diffusion term $D^2$, we have to navigate the issue of Lyapunov functions. This will be addressed in Section \ref{sec:moment-estimate} using inequalities on periodic functions collected in Appendix \ref{sec:periodic_functions}. Moreover, as a result of a Lyapunov function designed specifically for $P_t$, the argument for contracting properties in our work is slightly different from \cite{butkovsky2020generalized}, leveraging delicate exponential moment bounds. The exact argument will be presented in Section \ref{sec:mixing} where we supply the proof of Theorem \ref{thm:geometric-ergodicity}.

\section{Moment estimates} \label{sec:moment-estimate}

Throughout the rest of the paper, $c$ and $C$ denote generic positive constants that may change from line to line. The main parameters that they depend on will appear between parenthesis, e.g., $c(T,q)$ is a function of $T$ and $q$.

In this section, we collect useful energy estimates on the solution $u(t;u_0)$ of \eqref{eqn:KSE} through Lemma \ref{lem:moment:int.|D^2u|^2} and Lemma \ref{lem:moment:H:exponential}. More specifically, in Lemma \ref{lem:moment:int.|D^2u|^2}, we perform path-wise estimates on the solutions while making use of the auxiliary results from Appendix \ref{sec:periodic_functions}. In lemma \ref{lem:moment:H:exponential}, we derive exponential moment bounds, which play the role of Lyapunov functions employed to establish Exponential mixing stated in Theorem \ref{thm:geometric-ergodicity}.

In order to precisely formulate the results, following the framework of \cite{collet1993global,nicolaenko1985some}, we introduce $\A_L$ denoting the space of odd $L$-periodic functions, namely,
\begin{align} \label{form:A_L}
    \A_L := \big\{u:u(x+L)=u(x),\, u(-x)=-u(x)\big\}.
\end{align}
 Let $\f\in \A_{2L}$ be the function as in Lemma \ref{lem:R(u)} with $\gamma/2$. That is, for all $u\in H$ and $b\in\rbb$, it holds that 
    \begin{align} \label{ineq:R(u)>|D^2u|^2:gamma/2}
       &\frac{1}{2}\gamma\|D^2 u\|^2_{H_{2L}}-\|Du\|^2_{H_{2L}}+\frac{1}{2}\la u^2,D\f(\cdot+b)\ra_{H_{2L}}  \notag  \\
       &\ge \frac{1}{8}\gamma\|D^2u\|^2_{H_{2L}} +\frac{1}{2}\|u\|^2_{H_{2L}}-\frac{1}{4L}\big|\la u,D\f(\cdot+b)\ra_{H_{2L}}  \big|^2.
    \end{align}
In the above, we recall $\la \cdot,\cdot\ra_{H_{2L}}$ is the inner product in $H_{2L}$
\begin{align*}
    \la u,v\ra_{H_{2L}} = \int_{-L}^L u(x)v(x)\d x.
\end{align*}
Also, for $b(\cdot)\in C^1([0,\infty);\rbb)$, we denote by $\f_{b(\cdot)}$ the translation of $\f$ defined as
    \begin{align} \label{form:phi_b(t)}
        \f_{b(t)}(x) = \f(x+b(t)) ,\quad x\in\rbb.
    \end{align}
In particular, given $u(t)$ the solution of \eqref{eqn:KSE}, we introduce the process $b(t)$ satisfying the equation
\begin{align}\label{eqn:b(t)}
    \frac{\d}{\d t}b(t) = \frac{1}{4L}\la u,D\f_{b(t)}\ra_{H_{2L}},\quad b(0)=0.
\end{align}
We note that since $u\in C([0,\infty);H)$, the solution $b(t)$ of \eqref{eqn:b(t)} is guaranteed to exist. See \cite[Appendix A]{collet1993global}.

In what follows, we state the first result of this section through Lemma \ref{lem:moment:int.|D^2u|^2} giving a pathwise estimate on the solution $u(t)$ in terms of the function $\f_{b(t)}$ defined in \eqref{ineq:R(u)>|D^2u|^2:gamma/2}-\eqref{form:phi_b(t)}-\eqref{eqn:b(t)}.

\begin{lemma} \label{lem:moment:int.|D^2u|^2}
For every $u_0\in H$, the following holds
    \begin{align} \label{ineq:moment:int.|D^2u|^2}
       & \|u(t)\|^2_{H} + \frac{1}{2}\gamma\int_0^t \|D^2u(s)\|^2_{H}\d s +\frac{1}{2}\int_0^t\|u(s)-\f_{b(s)}\|^2_{H_{2L}}\d s\notag \\
    & \le 4\|u_0\|^2_{H}+2\int_0^t\la u(s)-\f_{b(s)},\sigma\d W(s)\ra_{H_{2L}}+C_0t+C_1,\quad t\ge 0,
    \end{align}
    where $\f$ and $\f_{b(\cdot)}$ are the functions as in \eqref{ineq:R(u)>|D^2u|^2:gamma/2}-\eqref{form:phi_b(t)}-\eqref{eqn:b(t)}, and $C_0$ and $C_1$ are defined as
    \begin{align} \label{form:C_0}
    C_0 = \Big(1+\frac{2}{\gamma}\Big)\|\f\|^2_{H_{2L}}+2\gamma\|D^2\f\|^2_{H_{2L}} +\|\sigma\|^2_{H_{2L}},\quad
    C_1 = 3\|\f\|^2_{H_{2L}}.
\end{align}
\end{lemma}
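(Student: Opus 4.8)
The plan is to derive the estimate \eqref{ineq:moment:int.|D^2u|^2} by applying the It\^o formula to $\|u(t)\|^2_H$, then correcting the problematic $-\|Du\|^2_H$ term using the shifted function $\f_{b(t)}$ and the key inequality \eqref{ineq:R(u)>|D^2u|^2:gamma/2}. First I would apply It\^o's formula to $t\mapsto\|u(t)\|^2_H$ using the equation \eqref{eqn:KSE}; since $\la D^2 u,u\ra_H=-\|Du\|^2_H$, $\la D^4 u,u\ra_H=\|D^2u\|^2_H$, and $\la uDu,u\ra_H=\frac13\int D(u^3)=0$ by periodicity, this gives
\begin{align*}
    \tfrac12\d\|u\|^2_H + \|D^2u\|^2_H\cdot\gamma\,\d t - \|Du\|^2_H\,\d t = \tfrac12\|\sigma\|^2_{\HS}\,\d t + \la u,\sigma\,\d W\ra_H.
\end{align*}
The trouble is the indefinite sign of $-\|Du\|^2_H+\gamma\|D^2u\|^2_H$. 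To fix this I would reinterpret the $H=H_L^0$ integrals on $[-L/2,L/2]$ as integrals on $[-L,L]$ over the $2L$-periodic extension (which is legitimate since everything in sight is $L$-periodic and hence $2L$-periodic), so that $\|u\|^2_{H_{2L}}=2\|u\|^2_H$, etc., and similarly rewrite the nonlinear term: $\la uDu,u\ra_{H_{2L}}=0$ still holds, so I can freely add $\tfrac12\la u^2,D\f_{b(t)}\ra_{H_{2L}}$ for free? — not quite, this term is not zero, so I must instead \emph{add and subtract} it. Concretely, I would write
\begin{align*}
    -\|Du\|^2_{H_{2L}}+\tfrac12\gamma\|D^2u\|^2_{H_{2L}} = \Big(-\|Du\|^2_{H_{2L}}+\tfrac12\gamma\|D^2u\|^2_{H_{2L}}+\tfrac12\la u^2,D\f_{b(t)}\ra_{H_{2L}}\Big) - \tfrac12\la u^2,D\f_{b(t)}\ra_{H_{2L}},
\end{align*}
keeping the remaining $\tfrac12\gamma\|D^2u\|^2_{H_{2L}}$ aside, and then invoke \eqref{ineq:R(u)>|D^2u|^2:gamma/2} with $b=b(t)$ to bound the parenthesized quantity below by $\tfrac18\gamma\|D^2u\|^2_{H_{2L}}+\tfrac12\|u\|^2_{H_{2L}}-\tfrac{1}{4L}|\la u,D\f_{b(t)}\ra_{H_{2L}}|^2$.

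The second key idea is to match the nonlinear term $-\tfrac12\la u^2,D\f_{b(t)}\ra_{H_{2L}}$ and the bad $-\tfrac1{4L}|\la u,D\f_{b(t)}\ra_{H_{2L}}|^2$ term with the exact differential coming from $\f_{b(t)}$. Here I would compute $\d\la u(t),\f_{b(t)}\ra_{H_{2L}}$ by the product/It\^o rule: one piece is $\la \d u(t),\f_{b(t)}\ra_{H_{2L}}$ (which produces $+\tfrac12\la u^2,D\f_{b(t)}\ra_{H_{2L}}$ after integration by parts on the nonlinear term, plus linear terms in $\f$ and the stochastic term $\la\f_{b(t)},\sigma\,\d W\ra_{H_{2L}}$), and the other is $\la u(t),\partial_t\f_{b(t)}\ra_{H_{2L}}\,\d t = b'(t)\la u,D\f_{b(t)}\ra_{H_{2L}}\,\d t$, which by the definition \eqref{eqn:b(t)} of $b(t)$ equals exactly $\tfrac1{4L}|\la u,D\f_{b(t)}\ra_{H_{2L}}|^2\,\d t$. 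Adding a suitable multiple of $\d\la u,\f_{b(t)}\ra_{H_{2L}}$ to $\tfrac12\d\|u\|^2_{H_{2L}}$ (equivalently, working with $\tfrac12\|u\|^2_{H_{2L}}+\la u,\f_{b(t)}\ra_{H_{2L}}$, or better yet completing the square to get $\tfrac12\|u-\f_{b(t)}\|^2_{H_{2L}}$ modulo lower-order terms), the offending $\la u^2,D\f\ra$ and $|\la u,D\f\ra|^2$ terms cancel, leaving the dissipative term $\tfrac12\|u\|^2_{H_{2L}}$ from \eqref{ineq:R(u)>|D^2u|^2:gamma/2} and the desired $\tfrac18\gamma\|D^2u\|^2_{H_{2L}}$. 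I would then absorb the cross terms $\la D\f_{b(t)},Du\ra_{H_{2L}}$, $\la D^2\f_{b(t)},D^2u\ra_{H_{2L}}$ and $\la\f_{b(t)},u\ra$ generated by the linear part acting on $\f$ into $\tfrac18\gamma\|D^2u\|^2_{H_{2L}}$, $\tfrac14\|u\|^2_{H_{2L}}$ and the stochastic term, using Cauchy–Schwarz and Young's inequality, and using that $\|\f_{b(t)}\|_{H^j_{2L}}=\|\f\|_{H^j_{2L}}$ is \emph{translation-invariant} so all these constants are uniform in $t$; this is where the constant $C_0$ in \eqref{form:C_0} arises. Converting back from $H_{2L}$ to $H$ norms (factor $2$) and from $\tfrac14\|u\|^2_{H_{2L}}$-absorption and the need to dominate $\|u(t)\|^2_{H_{2L}}$ by $\|u(t)\|^2_H$ plus the $\f$-correction (via $\|u\|^2_{H_{2L}}=2\|u\|^2_H$ and $|\la u,\f_{b(t)}\ra_{H_{2L}}|\le \tfrac14\|u\|^2_{H_{2L}}+\|\f\|^2_{H_{2L}}$) accounts for the factor $4$ in front of $\|u_0\|^2_H$ and the constant $C_1=3\|\f\|^2_{H_{2L}}$ on the right.

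After integrating in time from $0$ to $t$ and rearranging, I would arrive exactly at \eqref{ineq:moment:int.|D^2u|^2}, with the stochastic integral $2\int_0^t\la u(s)-\f_{b(s)},\sigma\,\d W(s)\ra_{H_{2L}}$ being the combination of $\la u,\sigma\,\d W\ra$ and $\la\f_{b(s)},\sigma\,\d W\ra$ after the completion of the square. I expect the main obstacle to be bookkeeping: carefully tracking all the lower-order linear-in-$\f$ terms produced by both the It\^o differential of $\|u\|^2_{H_{2L}}$ and of $\la u,\f_{b(t)}\ra_{H_{2L}}$, ensuring each is absorbed with the correct fractional coefficient so that a strictly positive multiple of $\gamma\|D^2u\|^2_{H_{2L}}$ and of $\|u-\f_{b(s)}\|^2_{H_{2L}}$ survives, and that the remaining deterministic terms are genuinely time-independent constants — which forces the use of translation-invariance of the $\f$-norms and the specific definition \eqref{eqn:b(t)} of $b(t)$ to cancel the square term. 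A secondary subtlety is justifying the It\^o formula for $\|u(t)\|^2_H$ and for $\la u(t),\f_{b(t)}\ra_{H_{2L}}$ at the level of the weak (variational) solution from Proposition \ref{prop:well-posed}; this is standard and can be handled by a Galerkin/regularization argument or by citing the variational It\^o formula, so I would only remark on it rather than carry it out in full.
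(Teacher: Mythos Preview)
Your proposal is correct and follows essentially the same approach as the paper: the paper simply streamlines your plan by applying It\^o's formula directly to $\tfrac12\|u(t)-\f_{b(t)}\|^2_{H_{2L}}$ (i.e., it starts from the completed square you eventually arrive at), which packages the $\la u,\sigma\,\d W\ra$, $\la \f_{b},\sigma\,\d W\ra$, the nonlinear $\tfrac12\la u^2,D\f_b\ra$ term, and the $b'(t)\la u,D\f_b\ra$ term in one stroke. All other ingredients you identify --- the use of \eqref{ineq:R(u)>|D^2u|^2:gamma/2}, the choice \eqref{eqn:b(t)} of $b(t)$ to cancel $\tfrac{1}{4L}|\la u,D\f_b\ra|^2$, Cauchy--Schwarz absorption of the linear $\f$-cross terms, translation invariance of $\|\f_b\|$, and the $H_{2L}$-to-$H$ conversion producing the factor $4$ and $C_1=3\|\f\|^2_{H_{2L}}$ --- match the paper exactly.
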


\begin{proof} 
First of all, we note that $\f_{b(t)}$ satisfies
    \begin{align*}
        \frac{\d}{\d t}\f_{b(t)} = D\f(\cdot+b(t)) b'(t).
    \end{align*}
    Now, a routine calculation produces
    \begin{align} \label{eqn:d|u-Phi_b|^2}
        \frac{1}{2}\d\|u-\f_{b}\|^2_{H_{2L}} & = \la u-\f_b,-D^2u-\gamma D^4u-uDu -D\f_b b'\ra_{H_{2L}}\d t \notag \\
        &\qquad + \la u-\f_b,\sigma\d W\ra_{H_{2L}}+\frac{1}{2}\|\sigma\|^2_{H_{2L}}\d t.
    \end{align}
We employ the fact that $\f$ is $2L$-periodic while making use of integrations by parts to compute
\begin{align*}
    & \la u-\f_b,-D^2u-\gamma D^4u-uDu -D\f_b b'\ra_{H_{2L}}\\
    &= \|Du\|^2_{H_{2L}} -\gamma\|D^2u\|^2_{H_{2L}}  +\la uDu,\f_b\ra_{H_{2L}}\\
    &\qquad-\la u,D\f_b\ra_{H_{2L}}b'+\la \f_b,D\f_b\ra_{H_{2L}} b'+\la \f_b,D^2u\ra_{H_{2L}}+\gamma\la \f_b,D^4 u\ra_{H_{2L}}\\
    &=  \|Du\|^2_{H_{2L}} -\gamma\|D^2u\|^2_{H_{2L}}  -\frac{1}{2}\la u^2,D\f_b\ra_{H_{2L}}\\
    &\qquad -\la u,D\f_b\ra_{H_{2L}}b'+\la \f_b,D^2u\ra_{H_{2L}}+\gamma\la D^2\f_b,D^2 u\ra_{H_{2L}}.
\end{align*}
In the last identity above, we invoked the fact that $\f_b$ is $2L$-periodic. Next, we employ Cauchy-Schwarz inequality to infer
\begin{align*}
    &\la \f_b,D^2u\ra_{H_{2L}}+\gamma\la D^2\f_b,D^2 u\ra_{H_{2L}} \\
    &\le \frac{1}{\gamma}\|\f_b\|^2_{H_{2L}}+\gamma\|D^2\f_b\|^2_{H_{2L}}+\frac{\gamma}{2}\|D^2u\|^2_{H_{2L}},
\end{align*}
whence
\begin{align*}
    & \la u-\f_b,-D^2u-\gamma D^4u-uDu -D\f_b b'\ra_{H_{2L}}\\
    &\le \|Du\|^2_{H_{2L}} -\frac{1}{2}\gamma\|D^2u\|^2_{H_{2L}}  -\frac{1}{2}\la u^2,D\f_b\ra_{H_{2L}}\\
    &\qquad -\la u,D\f_b\ra_{H_{2L}}b'+\frac{1}{\gamma}\|\f_b\|^2_{H_{2L}}+\gamma\|D^2\f_b\|^2_{H_{2L}}.
\end{align*}
This together with \eqref{ineq:R(u)>|D^2u|^2:gamma/2} implies
\begin{align*}
    &\la u-\f_b,-D^2u-\gamma D^4u-uDu -D\f_b b'\ra_{H_{2L}}\\
    &\le - \frac{1}{8}\gamma\|D^2u\|^2_{H_{2L}} -\frac{1}{2}\|u\|^2_{H_{2L}}+\frac{1}{4L}\big|\la u,D\f_b\ra_{H_{2L}}  \big|^2\\
    &\qquad-\la u,D\f_b\ra_{H_{2L}}b'+\frac{1}{\gamma}\|\f_b\|^2_{H_{2L}}+\gamma\|D^2\f_b\|^2_{H_{2L}}.
\end{align*}
To further estimate the above right-hand side, let $b(\cdot)$ satisfy equation \eqref{eqn:b(t)}. We employ the elementary inequality $2(a^2+b^2)\ge (a+b)^2$, $a,b\in\rbb$, to see that
\begin{align} \label{ineq:<u-phi>}
    &\la u-\f_b,-D^2u-\gamma D^4u-uDu -D\f_b b'\ra_{H_{2L}} \notag \\
    &\le - \frac{1}{8}\gamma\|D^2u\|^2_{H_{2L}} -\frac{1}{2}\|u\|^2_{H_{2L}}+\frac{1}{\gamma}\|\f_b\|^2_{H_{2L}}+\gamma\|D^2\f_b\|^2_{H_{2L}} \notag \\
    &\le - \frac{1}{8}\gamma\|D^2u\|^2_{H_{2L}} -\frac{1}{4}\|u-\f_b\|^2_{H_{2L}}+\frac{1}{2}\|\f_b\|^2_{H_{2L}}+\frac{1}{\gamma}\|\f_b\|^2_{H_{2L}}+\gamma\|D^2\f_b\|^2_{H_{2L}}.
\end{align}
From \eqref{eqn:d|u-Phi_b|^2}, we get
\begin{align} \label{ineq:d|u-Phi_b|^2}
    \d\|u-\f_{b}\|^2_{H_{2L}} & \le - \frac{1}{4}\gamma\|D^2u\|^2_{H_{2L}}\d t -\frac{1}{2}\|u-\f_b\|^2_{H_{2L}}\d t+2\la u-\f_b,\sigma\d W\ra_{H_{2L}} \notag \\
    &\qquad+\Big[\Big(1+\frac{2}{\gamma}\Big)\|\f_b\|^2_{H_{2L}}+2\gamma\|D^2\f_b\|^2_{H_{2L}} +\|\sigma\|^2_{H_{2L}}\Big] \d t.
\end{align}
As a consequence, we integrate both sides with respect to time $t$ and obtain
\begin{align*}
    &\|u(t)-\f_{b(t)}\|^2_{H_{2L}} + \frac{1}{4}\gamma\int_0^t \|D^2u(s)\|^2_{H_{2L}}\d s +\frac{1}{2}\int_0^t\|u(s)-\f_{b(s)}\|^2_{H_{2L}}\d s\\
    & \le\|u_0-\f\|^2_{H_{2L}}+\Big[\Big(1+\frac{2}{\gamma}\Big)\|\f_b\|^2_{H_{2L}}+2\gamma\|D^2\f_b\|^2_{H_{2L}} +\|\sigma\|^2_{H_{2L}}\Big] t\\
    &\qquad +2\int_0^t\la u(s)-\f_{b(s)},\sigma\d W(s)\ra_{H_{2L}}.
\end{align*}
It follows that
    \begin{align*}
    &\frac{1}{2}\|u(t)\|^2_{H_{2L}} + \frac{1}{4}\gamma\int_0^t \|D^2u(s)\|^2_{H_{2L}}\d s +\frac{1}{2}\int_0^t\|u(s)-\f_{b(s)}\|^2_{H_{2L}}\d s\\
    & \le \|\f_{b(t)}\|^2_{H_{2L}}+2\|u_0\|^2_{H_{2L}}+2\|\f\|^2_{H_{2L}} +2\int_0^t\la u(s)-\f_{b(s)},\sigma\d W(s)\ra_{H_{2L}}\\
    &\qquad+\Big[\Big(1+\frac{2}{\gamma}\Big)\|\f_b\|^2_{H_{2L}}+2\gamma\|D^2\f_b\|^2_{H_{2L}} +\|\sigma\|^2_{H_{2L}}\Big] t.
\end{align*}
Since $u$ is $L$-periodic and $\f$ is $2L$-periodic, we have
\begin{align} \label{eqn:L-periodic}
    \|u\|^2_{H_{2L}} =2\|u\|^2_{H},\quad \|\f_{b(t)}\|^2_{H_{2L}}= \|\f(\cdot +b(t))\|^2_{H_{2L}} = \|\f\|^2_{H_{2L}}.
\end{align}
So, 
\begin{align*}
   & \|u(t)\|^2_{H} + \frac{1}{2}\gamma\int_0^t \|D^2u(s)\|^2_{H}\d s +\frac{1}{2}\int_0^t\|u(s)-\f_{b(s)}\|^2_{H_{2L}}\d s\\
    & \le 4\|u_0\|^2_{H}+2\int_0^t\la u(s)-\f_{b(s)},\sigma\d W(s)\ra_{H_{2L}}+C_0t+C_1,
\end{align*}
where $C_0$ and $C_1$ are defined in \eqref{form:C_0}. This establishes \eqref{ineq:moment:int.|D^2u|^2}, thereby concluding the proof.

\end{proof}

As a consequence of the proof of Lemma \ref{lem:moment:int.|D^2u|^2}, we provide two exponential moment bounds through Lemma \ref{lem:moment:H:exponential}, cf. \eqref{ineq:moment:H:exponential:int_0^t|D^2u|} and \eqref{ineq:moment:H:exponential:|u|^2_H}. The former will be employed to establish the contraction property of the Markov semigroup $P_t$ whereas the latter supplies the Lyapunov estimate in order to extract the convergence rates toward the unique invariant probability measure.

\begin{lemma}   \label{lem:moment:H:exponential}
For all $\beta>0$ sufficiently small and for all $u_0\in H$, the followings hold:
    \begin{align} \label{ineq:moment:H:exponential:int_0^t|D^2u|}
        \E\exp\Big\{ \frac{1}{2}\beta\gamma\int_0^t\|D^2u(s)\|^2_{H}\d s  \Big\}\le 2\exp\{4\beta \|u_0\|^2_H+\beta C_0 t+\beta C_1\},\quad t\ge 0,
    \end{align}
    and  
    \begin{align} \label{ineq:moment:H:exponential:|u|^2_H}
         \E\exp\big\{ \beta \|u(t)\|^2_H \big\} \le 2\exp\Big\{4\beta C_0+\beta C_1 + 4\beta e^{-\frac{1}{4}t}\|u_0\|^2_H \Big\},\quad t\ge 0.
    \end{align}
    In the above, $\f_{b(\cdot)}$ is the function defined in \eqref{ineq:R(u)>|D^2u|^2:gamma/2}-\eqref{form:phi_b(t)}-\eqref{eqn:b(t)}, and $C_0$, $C_1$ are defined in \eqref{form:C_0}.
\end{lemma}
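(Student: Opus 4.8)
The plan is to deduce both exponential moment bounds from the pathwise inequality \eqref{ineq:moment:int.|D^2u|^2} of Lemma \ref{lem:moment:int.|D^2u|^2}, via an exponential supermartingale argument. Write
\begin{align*}
    M(t) = 2\int_0^t \la u(s)-\f_{b(s)},\sigma\d W(s)\ra_{H_{2L}},
\end{align*}
which is a continuous local martingale with quadratic variation $\la M\ra_t = 4\int_0^t \|\sigma^*(u(s)-\f_{b(s)})\|^2_{\rbb^M}\d s$. The key point is that $\sigma$ is a bounded operator, so $\la M\ra_t \le C\int_0^t \|u(s)-\f_{b(s)}\|^2_{H_{2L}}\d s$ for a constant $C$ depending only on $\sigma$; in fact, since one only needs the $P_NH$ component, $\|\sigma^*w\|^2_{\rbb^M} \le \|\sigma\|^2 \|w\|^2_{H_{2L}}$ suffices. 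Therefore the dissipative term $\tfrac12\int_0^t\|u(s)-\f_{b(s)}\|^2_{H_{2L}}\d s$ appearing on the left of \eqref{ineq:moment:int.|D^2u|^2} controls a constant multiple of $\la M\ra_t$, which is exactly what is needed to absorb the exponential of $M$.

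For the first bound \eqref{ineq:moment:H:exponential:int_0^t|D^2u|}: multiply \eqref{ineq:moment:int.|D^2u|^2} by $\beta$, exponentiate, and drop the nonnegative terms $\beta\|u(t)\|^2_H$ and $\tfrac{\beta}{2}\int_0^t\|u(s)-\f_{b(s)}\|^2_{H_{2L}}\d s$ from the left --- but keep a fraction, say half, of the latter to pay for the martingale. Concretely, from \eqref{ineq:moment:int.|D^2u|^2},
\begin{align*}
    \tfrac12\beta\gamma\int_0^t\|D^2u(s)\|^2_H\d s \le 4\beta\|u_0\|^2_H + \beta M(t) - \tfrac{\beta}{2}\int_0^t\|u(s)-\f_{b(s)}\|^2_{H_{2L}}\d s + \beta C_0 t + \beta C_1.
\end{align*}
Then write $\beta M(t) - \tfrac{\beta}{2}\int_0^t\|u-\f_b\|^2 = \big(\beta M(t) - \tfrac{\beta^2}{2}\cdot a \la M\ra_t\big) + \big(\tfrac{a\beta^2}{2}\la M\ra_t - \tfrac{\beta}{2}\int_0^t\|u-\f_b\|^2\big)$ with $a$ the constant from $\la M\ra_t \le a\int_0^t\|u-\f_b\|^2$ (absorbing $\|\sigma\|^2$ and the factor $4$); for $\beta$ small enough the second bracket is $\le 0$, and $\exp\{\beta M(t)-\tfrac{a\beta^2}{2}\la M\ra_t\}$ is a supermartingale of mean $\le 1$ (it is a genuine supermartingale by Fatou after localization). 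Taking expectations and using $\E e^X \le (\E e^{2X})^{1/2}\cdot 1$ --- i.e. Cauchy--Schwarz to split the $\beta M$ exponential from the rest, choosing the exponential-supermartingale normalization accordingly --- gives the stated bound with the factor $2$ out front (the $2$ is the $2^{1/2}\cdot 2^{1/2}$ or simply the Cauchy--Schwarz overhead; matching it precisely is a matter of bookkeeping on the constant in front of $M$).

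For the second bound \eqref{ineq:moment:H:exponential:|u|^2_H}, I need the genuinely dissipative (exponentially decaying) prefactor $e^{-t/4}$ on $\|u_0\|^2_H$, so the naive integrated inequality is not enough --- I must use the \emph{differential} form \eqref{ineq:d|u-Phi_b|^2} together with the $L$-periodic identity \eqref{eqn:L-periodic}. From \eqref{ineq:d|u-Phi_b|^2}, the process $Y(t) := \|u(t)-\f_{b(t)}\|^2_{H_{2L}}$ satisfies $\d Y \le -\tfrac12 Y\,\d t + 2\la u-\f_b,\sigma\d W\ra_{H_{2L}} + C_2\,\d t$ where $C_2 = (1+2/\gamma)\|\f_b\|^2_{H_{2L}} + 2\gamma\|D^2\f_b\|^2_{H_{2L}} + \|\sigma\|^2_{H_{2L}} = C_0$. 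Apply the Itô product rule to $e^{t/2}Y(t)$, or more precisely work with $\beta Y(t)$ and the exponential martingale $\exp\{2\beta\int_0^t e^{-(t-s)/2}\la u-\f_b,\sigma\d W\ra - 2\beta^2\int_0^t e^{-(t-s)}\|\sigma^*(u-\f_b)\|^2\d s\}$: the point is that $e^{-t/2}$ times the Gronwall-type bound yields $Y(t) \le e^{-t/2}Y(0) + 2C_0(1-e^{-t/2}) + (\text{martingale part})$, and after exponentiating with parameter $\beta$, the stochastic integral's quadratic variation is $\le 4\|\sigma\|^2\int_0^t e^{-(t-s)}Y(s)\,\d s$ which is bounded \emph{uniformly} in $t$ by $4\|\sigma\|^2 \sup_{s\le t} Y(s)$ --- but that sup is not controlled cheaply, so instead one uses the standard trick: exponentiate $\beta e^{-t/2}$-weighted form so that the martingale exponent is $\beta e^{t/2}\int_0^t 2e^{-s/2}\la\cdots\dW\ra$ whose quadratic variation involves $\int_0^t e^{-s}Y(s)\,\d s$, and \emph{this} is controlled by the already-established bound on $\int_0^t Y(s)\,\d s$ from \eqref{ineq:moment:int.|D^2u|^2} (or directly by integrating the drift inequality). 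For $\beta$ small this again produces an exponential supermartingale; taking expectations, using $Y(0) = \|u_0 - \f\|^2_{H_{2L}} \le 2\|u_0\|^2_{H_{2L}} + 2\|\f\|^2_{H_{2L}} = 4\|u_0\|^2_H + 2\|\f\|^2_{H_{2L}}$ (hence the $4\beta e^{-t/4}\|u_0\|^2_H$ term --- note the $e^{-t/2}$ from the equation becomes $e^{-t/4}$ after a Cauchy--Schwarz splitting of the martingale, which also accounts for the constant $4C_0$ rather than $2C_0$), and translating $H_{2L}$-norms back to $H$-norms via \eqref{eqn:L-periodic}, gives \eqref{ineq:moment:H:exponential:|u|^2_H}.

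\textbf{Main obstacle.} The crux in both parts is the control of the stochastic integral against the dissipation: one must verify that $\|\sigma^*(u-\f_b)\|^2_{\rbb^M} \lesssim \|u-\f_b\|^2_{H_{2L}}$ with a constant that, for $\beta$ chosen small depending on $\gamma$ and $\|\sigma\|$, lets the $-\tfrac\beta2\int\|u-\f_b\|^2$ (resp. the $e^{-t/2}$ decay) dominate the $\beta^2$-order quadratic-variation correction in the exponential martingale --- this is what fixes "how small $\beta$ must be." The secondary technical nuisance is the localization/Fatou argument needed to pass from "local supermartingale" to "$\E[\cdot]\le$ initial value," since exponential integrability is not a priori known; this is handled in the usual way by stopping times $\tau_n = \inf\{t: \|u(t)\|_H > n\}$ and monotone convergence, using $u\in C([0,\infty);H)$ from Proposition \ref{prop:well-posed}. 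Finally, matching the explicit constants $4\beta$, $\beta C_0$, $\beta C_1$, $4C_0$, $e^{-t/4}$ and the overall factor $2$ is pure bookkeeping once the structure above is in place.
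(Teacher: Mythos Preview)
Your plan for \eqref{ineq:moment:H:exponential:int_0^t|D^2u|} is essentially the paper's: absorb the quadratic variation $\la M\ra_t\le 4\beta^2\|\sigma\|^2_{H_{2L}}\int_0^t\|u-\f_b\|^2_{H_{2L}}\d s$ into the dissipative term $\tfrac{\beta}{2}\int_0^t\|u-\f_b\|^2_{H_{2L}}\d s$ for small $\beta$, then invoke an exponential martingale bound. One correction: the prefactor $2$ is not a Cauchy--Schwarz artifact. The paper takes $\lambda=2$ in the exponential martingale inequality $\P\big(\sup_t[M(t)-\tfrac{\lambda}{2}\la M\ra_t]\ge R\big)\le e^{-\lambda R}$, obtaining the tail bound $\P\big(\sup_t[M(t)-\la M\ra_t]\ge R\big)\le e^{-2R}$, which integrates to $\E\exp\big\{\sup_t[M(t)-\la M\ra_t]\big\}\le 2$; the estimate then follows by bounding the left side of the pathwise inequality by this supremum.

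For \eqref{ineq:moment:H:exponential:|u|^2_H} your outline has a real gap. Applying the product rule to $e^{t/2}Y(t)$ with $Y=\|u-\f_b\|^2_{H_{2L}}$ consumes \emph{all} of the drift $-\tfrac12 Y$, leaving nothing to absorb the quadratic variation; and your convolution alternative runs into the problem (which you yourself note) that $\int_0^t e^{-(t-s)/2}\la\cdot,\sigma\d W\ra$ is not a martingale in $t$, while the workarounds you sketch do not close. The paper's device is to use the weight $e^{\frac{1}{4}(t-T)}$ on $[0,T]$, i.e.\ at \emph{half} the drift rate. The product rule then leaves a residual $-\tfrac14 e^{\frac{1}{4}(t-T)}Y\,\d t$ of dissipation, and since $t\le T$ the quadratic-variation weight satisfies $e^{\frac{1}{2}(t-T)}\le e^{\frac{1}{4}(t-T)}$, so for small $\beta$ this residual absorbs $\d\la M_1\ra$ exactly as in part~1, and the same exponential martingale inequality applies. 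The decay $e^{-T/4}$ and the constant $4C_0$ (from $\int_0^T e^{\frac{1}{4}(t-T)}\d t\le 4$) fall out of this weight choice directly --- there is no Cauchy--Schwarz splitting anywhere in the argument.
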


\begin{proof}
    With regard to \eqref{ineq:moment:H:exponential:int_0^t|D^2u|}, we aim to employ the exponential Martingale inequality to establish the moment bound \eqref{ineq:moment:H:exponential:int_0^t|D^2u|}. To see this, letting $\beta>0$ be given and be chosen later, from \eqref{ineq:d|u-Phi_b|^2}, we get
\begin{align} \label{ineq:d.beta|u-phi_b|}
    \d\beta\|u-\f_{b}\|^2_{H_{2L}} & \le - \frac{1}{4}\beta\gamma\|D^2u\|^2_{H_{2L}}\d t -\frac{1}{2}\beta\|u-\f_b\|^2_{H_{2L}}\d t+\beta C_0\d t \notag \\
    &\qquad+2\beta\la u-\f_b,\sigma\d W\ra_{H_{2L}}.
\end{align}
Setting
\begin{align*}
    \d M = 2\beta\la u-\f_b,\sigma\d W\ra_{H_{2L}},
\end{align*}
observe that the corresponding quadratic variation process $\la M\ra(\cdot)$ satisfies
\begin{align*}
    \d\la M\ra = 4\beta^2 \big|  \la u-\f_b,\sigma\ra_{H_{2L}}\big|^2\d t\le 4\beta^2 \|\sigma\|^2_{H_{2L}} \|u-\f_b\|^2_{H_{2L}} \d t.
\end{align*}
Next, we recall the exponential Martingale inequality
\begin{align*}
    \P\Big( \sup_{t\ge 0}\Big[M(t)-\frac{1}{2}\lambda \la M\ra(t)\Big]\ge R\Big) \le e^{-\lambda R},\quad \lambda>0,\, R>0.
\end{align*}
Based on the above right hand side, we pick $\lambda=2$ and obtain
\begin{align*}
     \P\Big( \sup_{t\ge 0}\Big[M(t)-\la M\ra(t)\Big]\ge R\Big) \le e^{-2 R},
\end{align*}
implying
\begin{align} \label{ineq:E.exp.sup_[t>0]M(t)}
    \E \exp\Big\{ \sup_{t\ge 0}\Big[M(t)-\la M\ra(t)\Big] \Big\} \le 2.
\end{align}
So, provided $\beta$ is sufficiently small, e.g.,
\begin{align*}
    0<\beta < \frac{1}{8\|\sigma\|^2_{H_{2L}}},
\end{align*}
we deduce from \eqref{ineq:d.beta|u-phi_b|} that
\begin{align*}
    &\beta\|u(t)-\f_{b(t)}\|^2_{H_{2L}} + \frac{1}{4}\beta\gamma\int_0^t\|D^2u(s)\|^2_{H_{2L}}\d s -\beta C_0t\\
    &\le \beta\|u(t)-\f_{b(t)}\|^2_{H_{2L}} +  M(t)- \la M\ra(t),
\end{align*}
implying
\begin{align*}
    &\frac{1}{2}\beta\|u(t)\|^2_{H_{2L}} + \frac{1}{4}\beta\gamma\int_0^t\|D^2u(s)\|^2_{H_{2L}}\d s -\beta C_0 t-\beta \|\f_{b(t)}\|^2_{H_{2L}}\\
    &\le 2\beta\|u(0)\|^2_{H_{2L}} + 2\beta \|\f_{b(0)}\|^2_{H_{2L}}+  M(t)- \la M\ra(t).
\end{align*}
We employ \eqref{eqn:L-periodic} once again to infer 
\begin{align*}
    &\beta\|u(t)\|^2_{H} + \frac{1}{2}\beta\gamma\int_0^t\|D^2u(s)\|^2_{H}\d s -\beta C t-\beta \|\f\|^2_{H_{2L}}\\
    &\le 4\beta\|u(0)\|^2_{H} + 2\beta \|\f\|^2_{H_{2L}}+  M(t)- \la M\ra(t).
\end{align*}
In other words, it holds that
\begin{align*}
    &\beta\|u(t)\|^2_{H} + \frac{1}{2}\beta\gamma\int_0^t\|D^2u(s)\|^2_{H}\d s -\beta C_0 t-\beta C_1- 4\beta\|u(0)\|^2_{H} \\
    &\le  M(t)- \la M\ra(t) \le \sup_{s\ge0} \big[M(s)- \la M\ra(s)\big], 
\end{align*}
where $C_0,\,C_1$ are defined in \eqref{form:C_0}.
In light of \eqref{ineq:E.exp.sup_[t>0]M(t)}, we infer
\begin{align*}
    \E\exp\Big\{  \beta\|u(t)\|^2_{H} + \frac{1}{2}\beta\gamma\int_0^t\|D^2u(s)\|^2_{H}\d s\Big\}\le 2 \exp\Big\{ 4\beta\|u_0\|^2_{H} +\beta C_0 t+\beta C_1\Big\}.
\end{align*}
This establishes \eqref{ineq:moment:H:exponential:int_0^t|D^2u|}, as claimed.

Turning to the proof of \eqref{ineq:moment:H:exponential:|u|^2_H}, for $t\in[0,T]$, we have
\begin{align*}
    &\d \big[\beta e^{\frac{1}{4}(t-T)}\|u(t)-\f_{b(t)}\|^2_{H_{2L}}  \big]\\
    & = \beta\frac{1}{4}e^{\frac{1}{4}(t-T)}\|u(t)-\f_{b(t)}\|^2_{H_{2L}}\d t+ \beta e^{\frac{1}{4}(t-T)}\d\|u(t)-\f_{b(t)}\|^2_{H_{2L}}.
\end{align*}
From \eqref{ineq:d|u-Phi_b|^2}, we get
\begin{align} \label{ineq:d(e^(-t)|u-phi_b|)}
    &\d \big[\beta e^{\frac{1}{4}(t-T)}\|u(t)-\f_{b(t)}\|^2_{H_{2L}}  \big] \notag \\
    &\le  e^{\frac{1}{4}(t-T)}\Big[ -\frac{1}{4} \beta \|u(t)-\f_{b(t)}\|^2_{H_{2L}}\d t+2\beta\la u(t)-\f_{b(t)},\sigma\d W(t)\ra_{H_{2L}} +\beta C_0 \d t\Big].
\end{align}
Similar to the proof of \eqref{ineq:moment:H:exponential:int_0^t|D^2u|}, we aim to employ the exponential Martingale inequality to establish \eqref{ineq:moment:H:exponential:|u|^2_H} while making use of estimate \eqref{ineq:d(e^(-t)|u-phi_b|)}. To this end, let $M_1$ be the local Martingale process defined as
\begin{align*}
    \d M_1(t) = e^{\frac{1}{4}(t-T)}2\beta\la u(t)-\f_{b(t)},\sigma\d W(t)\ra_{H_{2L}}.
\end{align*}
Observe that for $t\in[0,T]$, the quadratic variation process $\la M_1\ra(t)$ satisfies
\begin{align*}
    \d \la M_1\ra(t) &= e^{\frac{1}{2}(t-T)}4\beta^2\big|\la u(t)-\f_{b(t)},\sigma\ra_{H_{2L}}\big|^2\d t\\
    &\le e^{\frac{1}{4}(t-T)}4\beta^2\|u(t)-\f_{b(t)}\|^2_{H_{2L}}\|\sigma\|^2_{H_{2L}}\d t.
\end{align*}
So, provided $\beta$ is small enough, e.g.,
\begin{align*}
    \beta \le \frac{1}{16 \|\sigma\|^2_{H_{2L}}},
\end{align*}
it holds that
\begin{align*}
     \d \la M_1\ra(t)  \le \frac{1}{4} \beta e^{\frac{1}{4}(t-T)}\|u(t)-\f_{b(t)}\|^2_{H_{2L}}\d t,\quad 0\le t\le T.
\end{align*}
As a consequence, \eqref{ineq:d(e^(-t)|u-phi_b|)} implies that
\begin{align*}
    \d \big[\beta e^{\frac{1}{4}(t-T)}\|u(t)-\f_{b(t)}\|^2_{H_{2L}}  \big]&\le \beta C_0 e^{\frac{1}{4}(t-T)}\d t + \d M_1(t)-\d\la M_1\ra(t).
\end{align*}
We integrate the above estimate with respect to $t\in[0,T]$ to obtain
\begin{align*}
    \beta \|u(T)-\f_{b(T)}\|^2_{H_{2L}} - \beta e^{-\frac{1}{4}T}\|u(0)-\f_{b(0)}\|^2_{H_{2L}}  - \beta C_0\int_0^T e^{\frac{1}{4}(t-T)}\d t \le M_1(T)-\la M_1\ra(T).
\end{align*}
In light of the exponential Martingale inequality \eqref{ineq:E.exp.sup_[t>0]M(t)} applying to $M_1$, we deduce
\begin{align*}
    &\E \exp \big\{\beta \|u(T)-\f_{b(T)}\|^2_{H_{2L}} - \beta e^{-\frac{1}{4}T}\|u(0)-\f_{b(0)}\|^2_{H_{2L}}  - 4\beta C_0\big\} \\
    &\le \E\exp\Big\{\sup_{0\le t\le T} \big[M_1(t)-\la M_1\ra(t)\big]\Big\} \le 2,
\end{align*}
whence
\begin{align*}
    \E \exp \big\{\beta \|u(T)-\f_{b(T)}\|^2_{H_{2L}}\big\}\le 2\exp\big\{ \beta e^{-\frac{1}{4}T}\|u(0)-\f_{b(0)}\|^2_{H_{2L}}  +4\beta C_0\big\}.
\end{align*}
It follows that 
\begin{align*}
    \E \exp \Big\{\frac{1}{2}\beta \|u(T)\|^2_{H_{2L}}\Big\}\le 2\exp\big\{\beta \|\f_{b(T)}\|^2_{H_{2L}}+ 2\beta e^{-\frac{1}{4}T}\|u_0\|^2_{H_{2L}}+ 2\beta e^{-\frac{1}{4}T}\|\f_{b(0)}\|^2_{H_{2L}}  +4\beta C_0\big\}.
\end{align*}
Recalling the choice of $\f$ from \eqref{ineq:R(u)>|D^2u|^2:gamma/2}-\eqref{form:phi_b(t)}-\eqref{eqn:b(t)}, and $C_0$, $C_1$ from \eqref{form:C_0}, we invoke once again the identity \eqref{eqn:L-periodic} to arrive at \eqref{ineq:moment:H:exponential:|u|^2_H}, namely,
\begin{align*}
    \E \exp \big\{\beta \|u(T)\|^2_{H}\big\}\le 2\exp\big\{4\beta e^{-\frac{1}{4}T}\|u_0\|^2_{H}+   4\beta C_0+\beta C_1\big\}.
\end{align*}
The proof is thus complete.
\end{proof}

\section{Proof of main result} \label{sec:mixing}

In this section, following the framework of \cite{butkovsky2020generalized, hairer2011asymptotic, kulik2017ergodic}, we proceed to establish Theorem \ref{thm:geometric-ergodicity} giving the Exponential mixing of \eqref{eqn:KSE}. In Section \ref{sec:mixing:proof-main-thm}, we review the main ingredients and conclude Theorem \ref{thm:geometric-ergodicity} while making use of useful auxiliary lemmas. In Section \ref{sec:mixing:proof-aux-results}, we provide the proofs of these auxiliary results, leveraging the estimates from Section \ref{sec:moment-estimate} and Appendix \ref{sec:periodic_functions}.

\subsection{Proof of Theorem \ref{thm:geometric-ergodicity}} \label{sec:mixing:proof-main-thm}
As mentioned in the Introduction, the argument for exponential mixing relies heavily on three crucial ingredients: a Lyapunov function, the contracting property of $P_t$ with respect to the distance-like function $d$ and the $d$-small property. For the reader's convenience, we recall these notions below in Definition \ref{def:contracting}.

\begin{definition} \label{def:contracting}
    1. A function $V:H\to[0,\infty)$ is called a Lyapunov function for $P_t$ if $V(u)\to\infty$ whenever $\|u\|_H\to\infty$ and that $V$ satisfies the estimate
    \begin{align*}
        \E V(u(t)) \le Ce^{-ct}V(u_0)+C,\quad t\ge 0,\, u_0\in H,
    \end{align*}
    for some positive constants $C,c$ independent of $t$ and $u_0$.

    2. \textup{(\cite[Definition 4.6]{hairer2011asymptotic})} A distance-like function $d$ bounded by 1 is called \textup{contracting} for $P_t$ if there exists $\alpha\in(0,1)$ such that for any $u,v\in H$ with $d(u,v)<1$, it holds that
    \begin{align*} 
        \W_{d}\big( P_t(u;\cdot), P_t(v;\cdot) \big) \le \alpha d(u,v).
    \end{align*}

    3. \textup{(\cite[Definition 4.4]{hairer2011asymptotic})} A set $B\subset H$ is called $d$-\textup{small} for $P_t$ if for some $\tilde{\alpha}\in(0,1)$,
    \begin{align*} 
       \sup_{u,v\in B} \W_{d}\big( P_t(u;\cdot), P_t(v;\cdot) \big) \le 1-\tilde{\alpha}.
    \end{align*}
\end{definition}

We now state the auxiliary results through Lemmas \ref{lem:Lyapunov}, \ref{lem:contracting} and \ref{lem:d-small} that will be employed to prove Theorem \ref{thm:geometric-ergodicity}, 

\begin{lemma} \label{lem:Lyapunov}
    For all $\beta>0$ sufficiently small, the function $V(u)=e^{\beta\|u\|^2_H}$ is a Lyapunov function for $P_t$. That is there exist positive constants $C$ and $c$ and $t_0$ such that the following holds
    \begin{align} \label{ineq:Lyapunov}
        \E \,e^{\beta\|u(t)\|^2_H} \le Ce^{-ct}e^{\beta\|u_0\|^2_H}+C,\quad t\ge t_0,
    \end{align}
    for all initial conditions $u_0\in H$.
\end{lemma}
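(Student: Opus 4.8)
The plan is to derive \eqref{ineq:Lyapunov} directly from the exponential moment bound \eqref{ineq:moment:H:exponential:|u|^2_H} in Lemma \ref{lem:moment:H:exponential}, which already states that
\begin{align*}
 \E\exp\big\{\beta\|u(t)\|^2_H\big\}\le 2\exp\big\{4\beta C_0+\beta C_1+4\beta e^{-\frac14 t}\|u_0\|^2_H\big\},\quad t\ge 0.
\end{align*}
The only thing standing between this and the claimed form $\E e^{\beta\|u(t)\|^2_H}\le C e^{-ct}e^{\beta\|u_0\|^2_H}+C$ is that the initial datum appears as $e^{4\beta e^{-t/4}\|u_0\|^2_H}$ rather than as a constant times $e^{\beta\|u_0\|^2_H}$. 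First I would fix a time $t_0$ large enough that $4e^{-t_0/4}\le \tfrac12$, say; then for every $t\ge t_0$ we have $4\beta e^{-t/4}\|u_0\|^2_H\le \tfrac12\beta\|u_0\|^2_H$, so the right-hand side is bounded by $2e^{4\beta C_0+\beta C_1}\exp\{\tfrac12\beta\|u_0\|^2_H\}$.

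Next I would convert this sublinear-in-exponent dependence into the desired decaying-coefficient form. The elementary inequality $e^{a/2}\le e^{-\delta}e^{a}+e^{\delta/(something)}$ is not quite what is needed; instead the cleanest route is: for any $r\ge 0$ and any $\kappa\in(0,1)$, $e^{\kappa r}\le \kappa e^{r}+(1-\kappa)\le e^{r}$ trivially, which is too weak. The right tool is rather to interpolate in $t$: apply \eqref{ineq:moment:H:exponential:|u|^2_H} and note that $e^{-t/4}\to 0$, so for $t\ge t_0$ large, $e^{4\beta e^{-t/4}\|u_0\|^2_H}=\big(e^{\beta\|u_0\|^2_H}\big)^{4e^{-t/4}}\le e^{\beta\|u_0\|^2_H}$ once $4e^{-t/4}\le 1$, i.e. $t\ge 4\ln 4$. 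Since $x\mapsto x^{\theta}$ with $\theta=4e^{-t/4}\le 1$ satisfies $x^{\theta}\le 1+\theta x\le 1+4e^{-t/4}x$ for $x\ge 1$ (convexity of $x\mapsto x$ vs.\ concavity of $x\mapsto x^\theta$, with equality of the two bounds at $x=1$), we get, writing $x=e^{\beta\|u_0\|^2_H}\ge 1$,
\begin{align*}
 e^{4\beta e^{-t/4}\|u_0\|^2_H}\le 1+4e^{-t/4}e^{\beta\|u_0\|^2_H}.
\end{align*}
Plugging this into \eqref{ineq:moment:H:exponential:|u|^2_H} yields, for $t\ge t_0:=4\ln4$,
\begin{align*}
 \E e^{\beta\|u(t)\|^2_H}\le 2e^{4\beta C_0+\beta C_1}\big(1+4e^{-t/4}e^{\beta\|u_0\|^2_H}\big)\le Ce^{-ct}e^{\beta\|u_0\|^2_H}+C
\end{align*}
with $C=8e^{4\beta C_0+\beta C_1}$ and $c=\tfrac14$, which is exactly \eqref{ineq:Lyapunov}. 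Finally, the coercivity requirement $V(u)\to\infty$ as $\|u\|_H\to\infty$ is immediate since $V(u)=e^{\beta\|u\|^2_H}$ with $\beta>0$. The smallness condition on $\beta$ is inherited verbatim from Lemma \ref{lem:moment:H:exponential} (namely $\beta\le 1/(16\|\sigma\|^2_{H_{2L}})$), so no new constraint is introduced.

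I do not anticipate a genuine obstacle here: the substance of the Lyapunov estimate is entirely contained in Lemma \ref{lem:moment:H:exponential}, and what remains is the bookkeeping lemma $x^\theta\le 1+\theta x$ for $x\ge 1$, $\theta\in[0,1]$, together with the choice of $t_0$. The only point requiring a little care is making sure the exponent $4e^{-t/4}$ is genuinely $\le 1$ before invoking that inequality, which is why the statement is phrased for $t\ge t_0$ rather than all $t\ge 0$; for $t\in[0,t_0]$ one could separately note that $\E e^{\beta\|u(t)\|^2_H}$ is finite and bounded in terms of $\|u_0\|_H$ via \eqref{ineq:moment:H:exponential:|u|^2_H} directly, but this is not needed for the statement as written.
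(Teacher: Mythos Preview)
Your proposal is correct and follows the same outline as the paper: both start from the exponential moment bound \eqref{ineq:moment:H:exponential:|u|^2_H} and then convert the dependence $\exp\{4\beta e^{-t/4}\|u_0\|^2_H\}$ into the form $Ce^{-ct}e^{\beta\|u_0\|^2_H}+C$. The only difference is in this elementary conversion step: the paper packages it as the auxiliary Lemma~\ref{lem:e^u<e^(-t+u)}, proved by a three-case analysis on the size of $t$ relative to $\log 2$ and $\log f(0)$, yielding decay rate $a/2$; you instead use the concavity inequality $x^{\theta}\le 1+\theta(x-1)\le 1+\theta x$ for $x\ge 1$, $\theta\in[0,1]$, which is shorter and gives the sharper rate $c=\tfrac14$ directly.
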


\begin{lemma} \label{lem:contracting}
    Let $\beta$ be the same constant from Lemma \ref{lem:Lyapunov}, $N$ be the constant as in \eqref{cond:W} and $d_{K,\beta}$ be the distance-like function defined in \eqref{form:d_K}. Assume that $N$ is sufficiently large satisfying
    \begin{align} \label{cond:N}
     \frac{\gamma}{2}\alpha_N^2 > \frac{1}{2\gamma}+\frac{1}{2}\Big(4\frac{L}{\beta\gamma}+\frac{1}{8}\beta C_0\Big),
\end{align}
where $\alpha_N$ is as in \eqref{eqn:D^2e_k=-alpha_ke_k} and $C_0$ is the constant defined in \eqref{form:C_0}. Then, there exist positive constants $t_1$, $K_1$ such that for all $t\ge t_1$, $K\ge K_1$, $d_{K,\beta}$ is contracting for $P_t$ as in Definition \ref{def:contracting}, part 2. That is, there exists a positive constant $\alpha=\alpha(N,t,K,\beta)\in(0,1)$ such that
    \begin{align} \label{ineq:d-contracting}
        \W_{d_{K,\beta}}\big( P_t(u;\cdot), P_t(v;\cdot) \big) \le \alpha d_{K,\beta}(u,v),\quad u,v\in H,
    \end{align}
    whenever $d_{K,\beta}(u,v)<1$.
\end{lemma}

\begin{lemma} \label{lem:d-small}
     Let $\beta$ be the same constant from Lemma \ref{lem:Lyapunov}, $N$ be the constant satisfying \eqref{cond:W} and \eqref{cond:N}, and $d_{K,\beta}$ be the distance-like function defined in \eqref{form:d_K}. Then, there exist positive constants $t_2$, $K_2$ such that for all $t\ge t_2$, $K\ge K_2$, the set
     \begin{align*}
         B_R = \{u\in H: \|u\|_H\le R\},\quad R>0,
     \end{align*}
     is $d_{K,\beta}$-small for $P_t$ as in Definition \ref{def:contracting}, part 3. That is, there exists a positive constant $\tilde{\alpha}=\tilde{\alpha}(N,t,K,\beta,R)$ such that 
      \begin{align} \label{ineq:d-small}
       \sup_{u,v\in B_R} \W_{d_{K,\beta}}\big( P_t(u;\cdot), P_t(v;\cdot) \big) \le 1-\tilde{\alpha}.
    \end{align}
\end{lemma}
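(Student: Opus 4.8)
The plan is to verify the $d_{K,\beta}$-small property through a generalized coupling in the spirit of \cite{butkovsky2020generalized,hairer2011asymptotic}, built from a nudged copy of \eqref{eqn:KSE}. Since $d_{K,\beta}\le 1$ by construction, it is enough to produce, for each pair $u_0,v_0\in B_R$, a coupling $(a,b)$ of $P_t(u_0,\cdot)$ and $P_t(v_0,\cdot)$ together with an event $\mathcal{G}$ --- of probability bounded below uniformly over $u_0,v_0\in B_R$ --- on which $\|a\|_H\vee\|b\|_H\le R'$ for a fixed $R'$ and $\|a-b\|_H$ is as small as we wish; on $\mathcal{G}$ one then has $d_{K,\beta}(a,b)\le K\|a-b\|_H e^{\beta (R')^2}\le\tfrac12$, whence
\begin{align*}
    \W_{d_{K,\beta}}\big(P_t(u_0,\cdot),P_t(v_0,\cdot)\big)\le \E\big[d_{K,\beta}(a,b)\big]\le \tfrac12\P(\mathcal{G})+\big(1-\P(\mathcal{G})\big)=1-\tfrac12\P(\mathcal{G}),
\end{align*}
which is \eqref{ineq:d-small}. (Here $t$ must be taken large, with $t_2$ depending on $K,N,R,\gamma$: a larger $K$ forces a smaller target for $\|a-b\|_H$, hence a larger $t_2$.)

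To build the coupling I would run the solution $u=u(\cdot;u_0)$ of \eqref{eqn:KSE} and, alongside it, the nudged process $v^\lambda$ started at $v_0$ solving
\begin{align*}
    \d v^\lambda + D^2 v^\lambda\,\d t+\gamma D^4 v^\lambda\,\d t+ v^\lambda Dv^\lambda\,\d t=\sigma\,\d W+\lambda P_N\big(u-v^\lambda\big)\,\d t,
\end{align*}
with $\lambda>0$ large. Since $P_NH\subset\textup{Range}(\sigma)$ by \eqref{cond:W}, the correction equals $\sigma g$ with $g:=\sigma^{-1}\big[\lambda P_N(u-v^\lambda)\big]$ an admissible adapted control; Novikov's condition is secured after a routine localization, using that $|g|\lesssim\lambda\|u-v^\lambda\|_H$ is integrable in time by the estimate below. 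By Girsanov's theorem the path law of $v^\lambda$ on $C([0,t];H)$ is absolutely continuous with respect to that of the solution of \eqref{eqn:KSE} started from $v_0$, so $(u(t),v^\lambda(t))$ yields, via the standard generalized-coupling lemma of \cite{butkovsky2020generalized}, a coupling of $P_t(u_0,\cdot)$ and $P_t(v_0,\cdot)$ --- the truncation $\wedge\,1$ built into $d_{K,\beta}$ being exactly what absorbs the Radon--Nikodym density in this passage.

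The quantitative core is a pathwise bound on $\rho:=u-v^\lambda$, which solves the noise-free random equation
\begin{align*}
    \partial_t\rho+D^2\rho+\gamma D^4\rho+\tfrac12 D\big((u+v^\lambda)\rho\big)+\lambda P_N\rho=0,\qquad \rho(0)=u_0-v_0.
\end{align*}
Testing against $\rho$ in $H$ and using the torus identity $\la D\rho,\rho^2\ra_H=0$ --- so that $\tfrac12\la D((u+v^\lambda)\rho),\rho\ra_H=-\la u\rho,D\rho\ra_H$ --- together with $\|u\|_{L^\infty}\lesssim\|Du\|_H$ and $\|D\rho\|_H\lesssim\|\rho\|_H^{1/2}\|D^2\rho\|_H^{1/2}$ (Sobolev and Poincaré on the torus), the nonlinear term is bounded by $\tfrac14\gamma\|D^2\rho\|_H^2+C_\gamma\|Du\|_H^{4/3}\|\rho\|_H^2$. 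The anti-diffusion $-D^2$ is then handled by splitting modes: on $|k|>N$ it is dominated by $\tfrac14\gamma D^4$ once $N$ is large (depending on $\gamma$ and $L$), and on $P_NH$ it is dominated by $\lambda P_N$ once $\lambda\gtrsim N^2$; collecting terms and using $\|P_N\rho\|_H^2+\tfrac{\gamma}{2}\|D^2\rho\|_H^2\ge\|\rho\|_H^2$ for $N$ large gives
\begin{align*}
    \ddt\|\rho(t)\|_H^2\le -2\|\rho(t)\|_H^2+2C_\gamma\|Du(t)\|_H^{4/3}\|\rho(t)\|_H^2,
\end{align*}
hence $\|\rho(t)\|_H^2\le\|u_0-v_0\|_H^2\exp\big(-2t+2C_\gamma\int_0^t\|Du(s)\|_H^{4/3}\,\d s\big)$. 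Since $\|Du\|_H^{4/3}\lesssim\|u\|_H^2+\|D^2u\|_H$, Lemma \ref{lem:moment:int.|D^2u|^2} and Lemma \ref{lem:moment:H:exponential} furnish a finite exponential moment for $\int_0^t\|Du(s)\|_H^{4/3}\,\d s$ that is uniform over $u_0\in B_R$; by Markov's inequality, for $t$ large the event $\{2C_\gamma\int_0^t\|Du(s)\|_H^{4/3}\,\d s\le t\}$ has probability close to $1$, and on it $\|\rho(t)\|_H\le 2R\,e^{-t/2}$. In parallel, the Lyapunov estimate of Lemma \ref{lem:Lyapunov} --- applied to $u$, and then to $v^\lambda=u-\rho$ --- makes $\|u(t)\|_H\vee\|v^\lambda(t)\|_H\le R'$ with probability close to $1$ for $R'$ and $t$ large, uniformly over $u_0,v_0\in B_R$. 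Letting $\mathcal{G}$ be the intersection of these events closes the argument as in the first paragraph.

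I expect the energy estimate on $\rho$ to be the main obstacle: the anti-diffusion term $D^2$ must be dominated \emph{separately} by the nudging strength $\lambda$ on the low modes and by $\gamma D^4$ on the high modes, which forces $N$ in \eqref{cond:W} to be large and $\lambda\gtrsim N^2$, while the quadratic term has to be controlled uniformly in time --- and this is precisely where the pathwise and exponential moment bounds of Section \ref{sec:moment-estimate}, themselves resting on the $\f$-shift of \cite{collet1993global,goodman1994stability}, are indispensable. A secondary technical point is the Girsanov step, where one localizes to secure integrability of the change-of-measure density before invoking the generalized-coupling lemma of \cite{butkovsky2020generalized}. Finally, we note an alternative route: if one is willing to quote the uniform irreducibility of \eqref{eqn:KSE} from \cite{gao2022irreducibility}, it suffices to drive the two copies, carried by \emph{independent} noises, into a small ball about $0\in H$ with uniform positive probability, which again renders $d_{K,\beta}$ small on that event.
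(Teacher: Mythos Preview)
Your strategy is the same as the paper's --- introduce the nudged process $v^\lambda$ and compare $P_t(u_0,\cdot)$ and $P_t(v_0,\cdot)$ through it --- but there is a genuine gap in how you pass from the \emph{generalized} coupling $(u(t),v^\lambda(t))$ to an \emph{actual} coupling $(a,b)$ of $P_t(u_0,\cdot)$ and $P_t(v_0,\cdot)$. The displayed inequality
\[
\W_{d_{K,\beta}}\big(P_t(u_0,\cdot),P_t(v_0,\cdot)\big)\le \E\big[d_{K,\beta}(a,b)\big]
\]
requires $b\sim P_t(v_0,\cdot)$, but $v^\lambda(t)$ does \emph{not} have this law; it only has a law that is absolutely continuous with respect to it. The ``absorption'' you allude to does not mean that $(u(t),v^\lambda(t))$ is a coupling --- it means that after gluing with a TV-optimal coupling between $\Law(v^\lambda(t))$ and $P_t(v_0,\cdot)$ one picks up an additional term $\W_{\TV}\big(\Law(v^\lambda(t)),P_t(v_0,\cdot)\big)$. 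Your event-based bound $1-\tfrac12\P(\mathcal G)$ is therefore unjustified as written: even with $\P(\mathcal G)$ close to $1$, you still need $\W_{\TV}\big(\Law(v^\lambda(t)),P_t(v_0,\cdot)\big)\le 1-\varepsilon$ uniformly over $u_0,v_0\in B_R$ and $t\ge 0$, and this step is missing. The ingredients to supply it are in your sketch (the pathwise decay of $\|\rho\|_H$ gives $\E\int_0^\infty|g(s)|^2\,\d s<\infty$ uniformly on $B_R$, hence $\W_{\TV}\le 1-\tfrac12\exp\{-C\}$ via the Kullback--Leibler bound, cf.\ the paper's Lemma~\ref{lem:error-in-law}), but the argument as stated does not go through without it.

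For comparison, the paper avoids the event-based construction altogether: it uses a generalized triangle inequality for $\W_{d_{K,\beta}}$ (Lemma~\ref{lem:W_(d_K,beta):triangle}) to split through $\Law(v^\lambda(t))$, bounds the first leg by $K\,\E\,\theta_{2\beta}(u(t),v^\lambda(t))\to 0$, and bounds the second leg by $\W_{\TV}\big(\Law(v^\lambda(t)),P_t(v_0,\cdot)\big)\le 1-\varepsilon$ via the explicit Girsanov/KL estimate (Lemma~\ref{lem:B2}). This sidesteps the need to build a single honest coupling. Your $\|Du\|_H^{4/3}$ energy estimate is a valid alternative to the paper's $\|Du\|_{L^\infty}\le\sqrt{L}\|D^2u\|_H$ route, but note that it forces you to also control $\int_0^t\|u\|_H^2\,\d s$ exponentially, which is one more step (doable from Lemma~\ref{lem:moment:int.|D^2u|^2}, since $\|u\|_H^2\le\|u-\f_b\|_{H_{2L}}^2+C$) beyond what the paper needs.
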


For the sake of clarity, we refer the proofs of Lemmas \ref{lem:Lyapunov}, \ref{lem:contracting} and \ref{lem:d-small} to Section \ref{sec:mixing:proof-aux-results}. Assuming the results of these lemmas, we are now in a position to conclude the proof of Theorem \ref{thm:geometric-ergodicity} by verifying the hypothesis of \cite[Theorem 4.8]{hairer2011asymptotic}.

\begin{proof}[Proof of Theorem \ref{thm:geometric-ergodicity}]
    Indeed, first of all, for every $\beta$ sufficiently small, Lemma \ref{lem:Lyapunov} supplies the Lyapunov bound for $P_t$ with $V(u)=e^{\beta\|u\|^2_H}$ being the Lyapunov function. Next, for all $N$ large (depending on $\beta$), let $t_i,K_i$, $i=1,2$ be the constants from Lemmas \ref{lem:contracting} and \ref{lem:d-small}, respectively. We note that for all $t\ge \max\{t_1,t_2\}$, $K\ge \max\{K_1,K_2\}$, $d_{K,\beta}$ defined in \eqref{form:d_K} is contracting and that every bounded ball $B_R$ is $d_{K,\beta}$-small for $P_t$. In view of \cite[Theorem 4.8]{hairer2011asymptotic}, we conclude the exponential convergence rate \eqref{ineq:geometric-ergodicity} with respect the distance-like function $\tilde{d}_{K,\beta}$ defined in \eqref{form:dtilde_K,beta}. This finishes the proof of Theorem \ref{thm:geometric-ergodicity}.
\end{proof}
  We also recall that Theorem  \ref{thm:geometric-ergodicity} is simply a more precise version
of Theorem \ref{thm:mixing:meta}, so that Theorem \ref{thm:mixing:meta} has now been proved.

\subsection{Proofs of auxiliary results} \label{sec:mixing:proof-aux-results}

In this subsection, we proceed to establish the auxiliary results presented in Section \ref{sec:mixing:proof-main-thm} that we employed to prove Theorem \ref{thm:geometric-ergodicity}. 

We start with Lemma \ref{lem:Lyapunov} whose proof is a short argument making use of Lemma \ref{lem:moment:H:exponential}, part 2 and Lemma \ref{lem:e^u<e^(-t+u)}.

\begin{proof}[Proof of Lemma \ref{lem:Lyapunov}] For all $\beta$ sufficiently small, we recall from Lemma \ref{lem:moment:H:exponential}, part 2, cf. \eqref{ineq:moment:H:exponential:|u|^2_H}, that
    \begin{align*}
    \E \exp \big\{\beta \|u(t)\|^2_{H}\big\}\le 2\exp\big\{4\beta e^{-\frac{1}{4}t}\|u_0\|^2_{H}+   4\beta C_0+\beta C_1\big\},
\end{align*}
where $C_0$ and $C_1$ are defined in \eqref{form:C_0}. It is clear that the function $f(t)=\beta\|u(t)\|^2_H$ satisfies condition \eqref{cond:e^u<e^(-t+u)}. It follows from Lemma \ref{lem:e^u<e^(-t+u)}, cf. \eqref{ineq:e^u<e^(-t+u)}, that we may infer $t_0$ such that
\begin{align*}
     \E \exp \big\{\beta \|u(t)\|^2_{H}\big\} \le Ce^{-ct}\exp\big\{\beta \|u_0\|^2_H\big\}+C,\quad t\ge t_0,
\end{align*}
for some positive constants $c=c(\beta)$, $C=C(\beta)$ independent of $t$ and $u_0$. This finishes the proof.
\end{proof}

Next, we turn our attention to the contracting and the $d$-small properties stated in Lemma \ref{lem:contracting} and Lemma \ref{lem:d-small}, respectively. Observe that inequality \eqref{ineq:d-contracting} is essentially an estimate on the difference between two solutions $u(t;u_0)$ and $u(t;v_0)$, $u_0,v_0\in H$. However, we will not directly compare them, owing to the presence of the nonlinearity in equation \eqref{eqn:KSE}. Instead, following the framework of \cite{butkovsky2014subgeometric,butkovsky2020generalized, glatt2017unique, glatt2021mixing,kulik2017ergodic}, we will employ a generalized coupling argument by slightly modifying equation \eqref{eqn:KSE} as follows: letting $N>0$ be as in \eqref{cond:W} and $\lambda>0$ be given, we introduce the process $v(t)=v(t;v_0,u_0)$ solving the equation
\begin{align} \label{eqn:KSE:v-coupling}
    \d v(t) + D^2 v(t)\d t +\gamma D^4 v(t)\d t + v(t) Dv(t)\d t = \sigma\d W(t)+\lambda P_N(u(t;u_0)-v(t)),
\end{align}
with the initial condition $v(0)=v_0$. We note that equation \eqref{eqn:KSE:v-coupling} only differs from \eqref{eqn:KSE} (with initial condition $v_0$) by the appearance of the additional term $\lambda P_N(u(t;u_0)-v(t))$, the motivation of which is two-fold. Firstly, shifting the original equation in suitable directions allows $u(t;u_0)$ and $v(t;v_0,u_0)$ to stay in close proximity as time $t$ tends to infinity, provided that noise is excited in a sufficient number of Fourier modes. This is captured in Lemma \ref{lem:coupling}. In turn, the synchronization can be exploited to effectively control the total variation distance between the distributions of $v(t;v_0,u_0)$ and $u(t;v_0)$. This is summarized in Lemma \ref{lem:error-in-law}. Altogether, the results of Lemmas \ref{lem:coupling} and \ref{lem:error-in-law} will be leveraged to establish the required contracting properties of Lemma  \ref{lem:contracting} and \ref{lem:d-small}.

Now, we state and prove Lemma \ref{lem:coupling}, giving a path-wise estimate on the difference of $u(t;u_0)$ and $v(t;v_0,u_0)$ in $H$-norm. The result of Lemma \ref{lem:coupling} will be particularly employed to prove Lemma \ref{lem:contracting}, producing the desired contracting property for $P_t$.

\begin{lemma} \label{lem:coupling}
    Given $u_0,v_0\in H$, let $u(t)=u(t;u_0)$ and $v(t)=v(t;v_0,u_0)$ respectively be the solutions of \eqref{eqn:KSE} and \eqref{eqn:KSE:v-coupling}. For all $C_2>0$, there exist $N=N(C_2,\gamma)$ and $\lambda=\lambda(C_2,\gamma)$ such that a.s.
    \begin{align} \label{ineq:|u-v|^2<|u_0-v_0|^2.e^(-ct+int.|Du|_infty)}
    \|u(t)-v(t)\|^2_H \le \|u_0-v_0\|^2_H \exp\Big\{ -C_2t  + \sqrt{L}\int_0^t \|D^2u(s)\|_{H}\d s\Big\}.
\end{align}
\end{lemma}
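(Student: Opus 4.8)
The statement is a pathwise contraction estimate for the difference $w(t) = u(t) - v(t)$, where $v$ solves the shifted equation \eqref{eqn:KSE:v-coupling}. The strategy is to subtract the two equations, test against $w$ in $H$, and exploit the damping term $-\lambda P_N w$ together with the smoothing/dissipation coming from $\gamma D^4$ to absorb the anti-diffusion $D^2$ and the nonlinearity. I would first write down the equation satisfied by $w$:
\begin{align*}
    \frac{\d}{\d t} w + D^2 w + \gamma D^4 w + uDu - vDv + \lambda P_N w = 0, \quad w(0) = u_0 - v_0,
\end{align*}
noting that this is now a random PDE with no stochastic forcing, since the noise cancels; hence all estimates below are genuinely pathwise.

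**Main energy estimate.** Testing against $w$ in $H_L$ (not $H_{2L}$ — here the solutions are genuinely $L$-periodic and no shift is needed), I would compute
\begin{align*}
    \frac{1}{2}\ddt \|w\|^2_H = \|Dw\|^2_H - \gamma\|D^2 w\|^2_H - \la uDu - vDv, w\ra_H - \lambda\|P_N w\|^2_H.
\end{align*}
For the nonlinear term, write $uDu - vDv = \tfrac12 D(u^2 - v^2) = \tfrac12 D((u+v)w)$, so that $\la uDu-vDv, w\ra_H = -\tfrac12\la (u+v)w, Dw\ra_H$; alternatively use $\la uDu-vDv,w\ra_H = \la uDw + wDv, w\ra_H$ and integrate by parts to leave only $\la wDv, w\ra_H = -\tfrac12\la w^2, Dv\ra$ type terms, bounding this by $\|Du\|_{L^\infty}\|w\|_H^2$ (plus a $\|Dv\|$ contribution, which must be handled carefully — see below). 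The anti-diffusion $\|Dw\|^2_H$ is controlled by interpolation: $\|Dw\|^2_H \le \tfrac{\gamma}{2}\|D^2 w\|^2_H + C(\gamma)\|w\|^2_H$. The crucial gain is that for the low modes, $-\lambda\|P_N w\|^2_H$ together with the spectral gap $\|Q_N w\|_H^2 \le (L/2\pi N)^2 \|D^2 (Q_N w)\|_H^2 \le C N^{-4}\|D^2 w\|_H^2$ (where $Q_N = I - P_N$) lets the remaining $\tfrac12\gamma\|D^2 w\|^2_H$ dominate $C(\gamma)\|w\|^2_H = C(\gamma)\|P_N w\|_H^2 + C(\gamma)\|Q_N w\|_H^2$ once $N$ is large and $\lambda$ large. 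This is exactly the mechanism that produces the $-C_2 t$ term, with $C_2$ prescribed in advance and $N,\lambda$ chosen accordingly.

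**Handling the nonlinearity and closing the Grönwall.** The one subtlety is that the nonlinear contribution should appear as $\sqrt{L}\|D^2 u\|_H$ in the exponent, not $\|Du\|_{L^\infty}$ or anything involving $v$. I would arrange the integration by parts on the nonlinear term so that all derivatives land on $u$: using $\la uDu - vDv, w\ra_H = \la uDu - vDv, w\ra_H$ and the decomposition $uDu - vDv = \tfrac12 D(uw) + \tfrac12 wDu$ (check: $\tfrac12 D(uw) + \tfrac12 wDu = \tfrac12(uDw + wDu) + \tfrac12 wDu$... this needs the symmetric split $uDu-vDv = \tfrac12(u+v)Dw + \tfrac12 wD(u+v)$, then $\la \tfrac12(u+v)Dw,w\ra = \tfrac14\la(u+v), D(w^2)\ra = -\tfrac14\la D(u+v), w^2\ra$, which combines with $\la\tfrac12 wD(u+v),w\ra = \tfrac12\la D(u+v),w^2\ra$ to give $\tfrac14\la D(u+v), w^2\ra$) — so one genuinely picks up $\|Dv\|_{L^\infty}$ as well. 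To avoid that, the better route is to keep $v$'s contribution inside the dissipation: bound $\tfrac14|\la Dv, w^2\ra| \le \tfrac14\|Dv\|_H \|w\|_{L^4}^2$ and use that $v$ itself satisfies a pathwise bound — but cleaner still is to write $uDu - vDv = uDw + wDv$, pair with $w$, integrate by parts the first: $\la uDw, w\ra = -\tfrac12\la Du, w^2\ra$, and for $\la wDv, w\ra = \la w^2, Dv\ra$ integrate by parts back to $-\la w^2 Dv\ra$... these are equal so $\la w^2, Dv\ra$ needs no $Dv$ if we instead use $\la wDv,w\ra + \la vDw,w\ra = \la D(vw),w\ra\cdot\tfrac{?}{}$; ultimately the clean identity is $\la uDu-vDv,w\ra_H = -\tfrac12\la Du + Dv, w^2\ra_{H}$ is false, whereas $\la uDu-vDv,w\ra_H = \tfrac14 \la D(u+v),w^2\ra$... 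I would therefore, in the write-up, accept the $\|D(u+v)\|_{L^\infty}$ bound and then use the embedding $\|D\phi\|_{L^\infty} \le C\|D^2\phi\|_H$ (valid on the torus for mean-zero functions via $\|D\phi\|_\infty \le \|D\phi\|_{H^1} \lesssim \|D^2\phi\|_H$) **only on $u$**, while for $v$ I would instead note that once the contraction holds it is self-improving, or simply absorb $v$'s rough part differently. Given the target inequality has only $\|D^2 u\|_H$, the intended argument must pair so that $v$ never appears: the correct algebra is $uDu - vDv = \tfrac12\partial_x(u^2-v^2)$, and $\la \tfrac12\partial_x(u^2-v^2), w\ra_H = -\tfrac12\la u^2 - v^2, Dw\ra_H = -\tfrac12\la (u+v)w, Dw\ra_H$; then write $(u+v)w\,Dw = \tfrac12(u+v)\partial_x(w^2)$, integrate by parts: $-\tfrac12\la(u+v)w,Dw\ra = \tfrac14\la \partial_x(u+v), w^2\ra$ — still symmetric. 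The escape is: $-\tfrac12\la(u+v)w, Dw\ra_H = -\tfrac12\la uw,Dw\ra_H - \tfrac12\la vw, Dw\ra_H$, and for the $v$ term integrate by parts the *other* way using that $\tfrac{\d}{\d t}$ estimates for $v$ give $\int\|D^2 v\|_H^2 < \infty$, OR — most likely what the authors do — by Agmon/interpolation bound $\tfrac12|\la vw,Dw\ra| \le \tfrac{\gamma}{8}\|D^2 w\|_H^2 + C\|v\|_{L^\infty}^2\|w\|_H^2$ and then *drop* $v$ by yet another pathwise exponential-moment trick. I will present it with the symmetric split and the bound $\tfrac14|\la\partial_x(u+v),w^2\ra| \le C\big(\|D^2 u\|_H + \|D^2 v\|_H\big)\|w\|_H^2$, then invoke the Lemma \ref{lem:moment:int.|D^2u|^2}-type bound to replace $\|D^2 v\|_H$; if the target truly has only $u$, the resolution is that $v$'s anti-diffusion/dissipation structure lets $\int_0^t\|D^2 v\|_H\,\d s$ be reabsorbed. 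After obtaining
\begin{align*}
    \ddt \|w\|^2_H \le \big(-2C_2 + 2\sqrt{L}\,\|D^2 u\|_H\big)\|w\|^2_H,
\end{align*}
Grönwall's inequality gives \eqref{ineq:|u-v|^2<|u_0-v_0|^2.e^(-ct+int.|Du|_infty)} immediately.

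**Expected main obstacle.** The genuine difficulty is the nonlinear term's treatment so that the exponent ends up depending only on $\|D^2 u\|_H$ (and a clean constant $C_2$), rather than on $v$ or on $L^\infty$-norms that are not directly controlled. Reconciling the symmetric structure of $uDu - vDv$ with the asymmetric conclusion is where the real work lies; everything else (interpolation to kill $D^2$, spectral gap on high modes, choosing $N$ and $\lambda$, Grönwall) is routine. I would therefore devote the bulk of the proof to a careful integration-by-parts bookkeeping of $\la uDu - vDv, w\ra_H$, using the low-mode damping to absorb the $P_N$-part of the resulting $\|w\|_H^2$ and the $\gamma D^4$-dissipation plus high-mode spectral gap to absorb the $Q_N$-part, leaving the advertised coefficient $\sqrt{L}\,\|D^2 u\|_H$ on the surviving term.
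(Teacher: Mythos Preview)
Your overall plan matches the paper: energy estimate on $w = u-v$, Cauchy--Schwarz on the anti-diffusion term, the spectral gap on high modes combined with the $\lambda P_N$ damping to absorb the bad constant into a prescribed $-C_2$, then Gr\"onwall and the embedding $\|Du\|_{L^\infty}\le\sqrt{L}\,\|D^2u\|_H$. But the step you correctly flag as the main obstacle --- making the nonlinear contribution depend only on $u$ and not on $v$ --- you leave genuinely unresolved. The workarounds you float (controlling $\int\|D^2 v\|_H$ separately, self-improvement, absorbing $v$'s part into the dissipation via $\|w\|_{L^4}$) are unnecessary and would not close cleanly, since any bound on $v$ feeds back through the coupling term.

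The missing observation is one line, and you were one step from it. You correctly arrive at
\[
\la uDu - vDv,\, w\ra_H \;=\; \tfrac14\,\la D(u+v),\, w^2\ra_H.
\]
Now simply write $u+v = 2u - w$: the $Dw$ contribution vanishes because $\la Dw,\, w^2\ra_H = \tfrac13\int_{-L/2}^{L/2} D(w^3)\,\d x = 0$ by periodicity, leaving
\[
\la uDu - vDv,\, w\ra_H \;=\; \tfrac12\,\la Du,\, w^2\ra_H \;\le\; \tfrac12\,\|Du\|_{L^\infty}\,\|w\|_H^2.
\]
This is exactly the asymmetric bound you wanted. The paper reaches the same identity via the equivalent expansion $uDu - vDv = uDw + wDu - wDw$ (substitute $v=u-w$ directly), using $\la uDw,w\ra_H = -\tfrac12\la Du,w^2\ra_H$ and $\la wDw,w\ra_H=0$. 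With this identity in hand, your differential inequality and the Gr\"onwall step go through exactly as you describe.
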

\begin{proof}%[Proof of Lemma \ref{lem:coupling}]

Setting $\bar{u}=u-v$, from \eqref{eqn:KSE} and \eqref{eqn:KSE:v-coupling}, observe that $\bar{u}$ satisfies the equation
\begin{align*}
   \frac{1}{2} \frac{\d}{\d t} \bar{u}(t) + D^2 \bar{u} (t) +\gamma D^4 \bar{u}(t) + u(t) Du(t)-v(t)Dv(t) = -\lambda P_N \bar{u}(t).
\end{align*}
A routine calculation on $H$-norm produces
\begin{align*}
    \frac{\d}{\d t} \|\bar{u}\|^2_H + \la D^2 \bar{u},\bar{u}\ra_H +\gamma \| D^2 \bar{u}\|^2_H + \la u Du-vDv,\bar{u}\ra_H = -\lambda\| P_N \bar{u}\|^2_H.
\end{align*}
On the one hand, we invoke Cauchy-Schwarz inequality to infer
\begin{align*}
    \la D^2 \bar{u},\bar{u}\ra_H\le \frac{\gamma}{2}\|D^2\bar{u}\|^2_H+\frac{1}{2\gamma}\|u\|^2_H.
\end{align*}
On the other hand, we employ an integration by parts to deduce that
\begin{align*}
    \la u Du-vDv,\bar{u}\ra_H &= \la \bar{u}^2,Du\ra_H-\la \bar{u}^2,D\bar{u}\ra_H+\la u,\bar{u}D\bar{u}\ra_H\\
    &=  \la \bar{u}^2,Du\ra_H-\frac{1}{2}\la Du,\bar{u}^2\ra_H\\
    &\le \frac{1}{2}\|\bar{u}\|^2_H\|Du\|_{L^\infty}.
\end{align*}
So, 
\begin{align*}
    \frac{1}{2} \frac{\d}{\d t}\|\bar{u}\|^2_H \le - \frac{\gamma}{2} \| D^2 \bar{u}\|^2_H -\lambda\| P_N \bar{u}\|^2_H+ \frac{1}{2\gamma}\|\bar{u}\|^2_H+\frac{1}{2}\|Du\|_{L^\infty}\|\bar{u}\|^2_H.
\end{align*}
Fixing $C_2>0$ be arbitrarily, let $N$ be large enough such that 
\begin{align} \label{ineq:alpha_N>C}
    \frac{\gamma}{2}\alpha_N^2\ge \frac{1}{2\gamma}+\frac{1}{2}C_2,
\end{align}
where $\alpha_N$ is as in \eqref{eqn:D^2e_k=-alpha_ke_k}. We pick
\begin{align} \label{form:lambda}
    \lambda = \frac{1}{2\gamma}+\frac{1}{2}C_2.
\end{align}
Observe that
\begin{align*}
    - \frac{\gamma}{2} \| D^2 \bar{u}\|^2_H -\lambda\| P_N \bar{u}\|^2_H & \le - \frac{\gamma}{2} \|(I-P_N) D^2 \bar{u}\|^2_H -\lambda\| P_N \bar{u}\|^2_H \\
    &\le -\frac{\gamma}{2} \alpha_N^2\|(I-P_N) \bar{u}\|^2_H -\lambda\| P_N \bar{u}\|^2_H \\
    &\le -\Big(\frac{1}{2\gamma}+\frac{1}{2}C_2\Big)\|\bar{u}\|^2_H.
\end{align*}
As a consequence,
\begin{align*}
    \frac{\d}{\d t}\|\bar{u}\|^2_H \le - C_2 \|\bar{u}\|^2_H+\|Du\|_{L^\infty}\|\bar{u}\|^2_H,
\end{align*}
whence
\begin{align*}
    \|\bar{u}(t)\|^2_H \le \|\bar{u}(0)\|^2_H \exp\Big\{ -C_2 t  +\int_0^t \|Du(s)\|_{L^\infty}\d s\Big\}.
\end{align*}
This together with \eqref{ineq:|u|_infty<L|Du|}, namely, 
\begin{align*}
   \|Du\|_{L^\infty} \le \sqrt{L}\|D^2u\|_H,
\end{align*}
immediately produces \eqref{ineq:|u-v|^2<|u_0-v_0|^2.e^(-ct+int.|Du|_infty)}, as claimed.
\end{proof}

Next, we formulate Lemma \ref{lem:error-in-law} providing an estimate on the total variation distance between $\Law\big(v(t;v_0,u_0)\big)$ and $P_t(v_0;\cdot)$ in terms of the difference $\|u(t;u_0)-v(t;v_0,u_0)\|_H$. The result of Lemma \ref{lem:error-in-law} will play a crucial role in establishing Lemma \ref{lem:d-small} on the $d$-small property.

\begin{lemma} \label{lem:error-in-law}
    Let $N$ be the constant as in \eqref{cond:W} and for every $u_0,v_0\in H$, let $u(t)=u(t;u_0)$ and $v(t)=v(t;v_0,u_0)$ be, respectively, the solutions of \eqref{eqn:KSE} and \eqref{eqn:KSE:v-coupling}. Then, the followings hold:
    \begin{align} \label{ineq:error-in-law:W_TV(v,u)<|u-v|^2_H}
    &\W_{\TV}\big( \Law(v(t;v_0,u_0)),P_t(v_0,\cdot)  \big)\notag \\
    &\le \frac{1}{2}\|\sigma^{-1}\|_{L(P_NH)}\lambda\sqrt{\E\int_0^t\|u(s;u_0)-v(s;v_0,u_0)\|^2_H\d s}.
    \end{align}
   and that
     \begin{align} \label{ineq:error-in-law:W_TV(v,u)<1-epsilon}
    &\W_{\TV}\big( \Law(v(t;v_0,u_0)),P_t(v_0,\cdot)  \big) \notag \\ &\le 1-\frac{1}{2}\exp\Big\{-\frac{1}{2} \|\sigma^{-1}\|^2_{L(P_NH)}\lambda^2  \E\int_0^t\|u(s;u_0)-v(s;v_0,u_0)\|^2_H\d s\Big\} .
    \end{align}
\end{lemma}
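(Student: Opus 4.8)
The plan is to bound the total variation distance by the Kullback--Leibler divergence via Pinsker's inequality, and then compute the latter using Girsanov's theorem, since $v(t;u_0,v_0)$ and $u(t;v_0)$ solve the same SPDE except that the former has the extra drift $\lambda P_N(u(t;u_0)-v(t))$, which lies in $P_N H \subset \textup{Range}(\sigma)$ by \eqref{cond:W}. Concretely, I would first observe that $\Law(v(t;u_0,v_0))$ and $P_t(v_0,\cdot)$ are the laws at time $t$ of the solutions driven, respectively, by the shifted noise $\sigma \d W(s) + \lambda P_N(u(s;u_0)-v(s))\d s$ and by $\sigma \d W(s)$ with the same initial datum $v_0$. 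Writing $h(s) = \sigma^{-1}\lambda P_N(u(s;u_0)-v(s;u_0,v_0)) \in \rbb^M$ (well-defined and bounded since $\sigma^{-1}:P_NH\to\rbb^M$ is bounded, cf. the discussion after \eqref{cond:W}), the Girsanov shift has density whose relative entropy is, by the standard formula,
\begin{align*}
    \KL\big(\Law(v_{[0,t]}) \,\|\, \Law(u(\cdot;v_0)_{[0,t]})\big) = \frac12 \E\int_0^t \|h(s)\|^2_{\rbb^M}\d s \le \frac12 \|\sigma^{-1}\|^2_{L(P_NH)}\lambda^2 \E\int_0^t \|u(s;u_0)-v(s;u_0,v_0)\|^2_H\,\d s,
\end{align*}
using $\|P_N(u-v)\|_H \le \|u-v\|_H$. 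Since total variation distance between the time-$t$ marginals is dominated by that between the path laws on $[0,t]$, this KL bound controls $\W_{\TV}\big(\Law(v(t;u_0,v_0)),P_t(v_0,\cdot)\big)$.

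For \eqref{ineq:error-in-law:W_TV(v,u)<|u-v|^2_H}, I would apply Pinsker's inequality in the form $\W_{\TV}(\mu,\nu) = \tfrac12\|\mu-\nu\|_{\TV} \le \sqrt{\tfrac12 \KL(\mu\|\nu)}$ (with the convention that $\W_{\TV}$ is the total variation Wasserstein distance, i.e.\ $\tfrac12$ the total variation norm, consistent with the discrete metric used in \eqref{form:W}), which combined with the entropy estimate above gives exactly the right-hand side $\tfrac12\|\sigma^{-1}\|_{L(P_NH)}\lambda\sqrt{\E\int_0^t\|u(s)-v(s)\|^2_H\d s}$. For the second bound \eqref{ineq:error-in-law:W_TV(v,u)<1-epsilon}, I would instead use the sharper inequality relating total variation to relative entropy that does not degenerate for large entropy, namely $1 - \W_{\TV}(\mu,\nu) \ge \tfrac12 e^{-\KL(\mu\|\nu)}$ (equivalently $\|\mu \wedge \nu\| \ge \tfrac12 e^{-\KL(\mu\|\nu)}$, a consequence of Jensen's inequality applied to $\log$ of the overlap, which is the standard companion to Pinsker used throughout \cite{hairer2011asymptotic}); feeding in the same entropy bound yields \eqref{ineq:error-in-law:W_TV(v,u)<1-epsilon} directly.

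The one genuine technical point — and the main obstacle — is justifying the Girsanov change of measure rigorously: one must verify Novikov's condition (or a localization/stopping-time argument substituting for it) so that the exponential martingale associated with the shift $h$ is a true martingale, so that the Radon--Nikodym density is legitimate and the entropy identity holds. Here the shift $h(s)$ is only finite-dimensional (living in $\rbb^M$) and, by Lemma \ref{lem:coupling}, $\|u(s)-v(s)\|_H$ decays like $\|u_0-v_0\|_H\exp\{-C_2 s + \sqrt{L}\int_0^s\|D^2u\|_H\}$, which is controlled in exponential-moment sense by Lemma \ref{lem:moment:H:exponential} part 1; so one can either invoke a suitable exponential moment bound to get Novikov outright, or — more robustly — run the Girsanov argument up to the stopping times $\tau_R = \inf\{s: \|u(s)-v(s)\|_H \ge R\}$, derive the bounds on the stopped processes, and pass to the limit $R\to\infty$ using monotone convergence on the right-hand side and lower semicontinuity of $\W_{\TV}$. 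The remaining steps — Pinsker, the overlap inequality, and the elementary estimate $\|P_N(u-v)\|_H\le \|u-v\|_H$ — are routine.
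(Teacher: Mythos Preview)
Your proposal is correct and follows essentially the same route as the paper: a Girsanov shift $h(s)=\sigma^{-1}\lambda P_N(u(s)-v(s))$, Pinsker's inequality for \eqref{ineq:error-in-law:W_TV(v,u)<|u-v|^2_H}, and the companion bound $\W_{\TV}\le 1-\tfrac12 e^{-D_{KL}}$ for \eqref{ineq:error-in-law:W_TV(v,u)<1-epsilon}. The one cosmetic difference is that the paper first bounds the solution TV by the TV between the \emph{noise} path laws $\Law(W_{[0,t]})$ and $\Law(\xi_{[0,t]})$ (via uniqueness of weak solutions) and then invokes \cite[Theorem A.2 and (A.2)]{butkovsky2020generalized} for the KL estimate on the $\rbb^M$-valued shifted Brownian motion, which sidesteps the Novikov/localization concern you raised.
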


The proof of Lemma \ref{lem:error-in-law} is a slight reworking of that of \cite[Lemma 5.5]{glatt2022short}. See also \cite[Section 4.2]{butkovsky2020generalized}.

\begin{proof}[Proof of Lemma \ref{lem:error-in-law}] For notation convenience, we denote by $z_{[0,t]}$ the sample path of the process $z(\cdot)$ on $[0,t]$.

    With regard to \eqref{ineq:error-in-law:W_TV(v,u)<|u-v|^2_H}, let $\xi$ be the process defined as
    \begin{align} \label{form:xi(t)}
        \d\xi(t)=\d W(t) + \sigma^{-1}\big(\lambda P_N(u(t;u_0)-v(t;v_0,u_0))\big)\d t.
    \end{align}
    We recall that since $\sigma$ satisfies \eqref{cond:W}, $\sigma^{-1}$ is indeed a bounded map from $P_NH$ to $\rbb^M$. In particular, this means that $\xi$ is a well-defined process. Moreover, we see that $v(t;v_0,u_0)$ obeys the equation
    \begin{align} \label{eqn:KSE:v-coupling:xi}
    \d v(t) + D^2 v(t)\d t +\gamma D^4 v(t)\d t + v(t) Dv(t)\d t = \sigma\d \xi(t),\quad v(0)=v_0.
\end{align}
By the uniqueness of weak solutions, 
\begin{align*}
    \big\{ W_{[0,t]}=\xi_{[0,t]} \big\}\subset \big\{ u(s;v_0) = v(s;v_0,u_0),\, s\in[0,t] \big\}.
\end{align*}
In other words,
\begin{align*}
    \mathbf{1}\big\{ W_{[0,t]}\neq \xi_{[0,t]} \big\} \ge \mathbf{1}\big\{ \exists \, s\in[0,t], \, u(s;v_0)\neq v(s;v_0,u_0)\big\}.
\end{align*}
So, by the definition of total variation distance $\W_{\TV}$, we obtain
\begin{align} \label{ineq:W_TV(u,v)<W_TV(W,xi)}
    \W_{\TV}\big( \Law(v(t;v_0,u_0)),P_t(v_0,\cdot)  \big) \le \W_{\TV}\big(\Law\big(W_{[0,t]} \big),\Law\big(\xi_{[0,t]} \big) \big).
\end{align}
At this point, we recall Pinsker inequality that
\begin{align*}
   \W_{\TV}\big(\Law\big(W_{[0,t]} \big),\Law\big(\xi_{[0,t]} \big) \big) \le \sqrt{\frac{1}{2} D_{KL}\big(\Law\big(W_{[0,t]} \big),\Law\big(\xi_{[0,t]} \big)\big) },
\end{align*}
where $D_{KL}$ denotes the Kullback-Leibler divergence. Also, thanks to the expression of $\xi$ in \eqref{form:xi(t)}, \cite[Theorem A.2]{butkovsky2020generalized} implies that
\begin{align} \label{ineq:D_KL<|u-v|^2_H}
    D_{KL}\big(\Law\big(W_{[0,t]} \big),\Law\big(\xi_{[0,t]} \big)\big) \le \frac{1}{2}\E\int_0^t \|\sigma^{-1}\big(\lambda P_N(u(s;u_0)-v(s;v_0)\big)\|^2_H\d s.
\end{align}
Altogether, we get
\begin{align*}
    \W_{\TV}\big( \Law(v(t;v_0,u_0)),P_t(v_0,\cdot)  \big)  & \le \frac{1}{2}\sqrt{\E\int_0^t \|\sigma^{-1}\big(\lambda P_N(u(s;u_0)-v(s;v_0,u_0)\big)\|^2_H\d s}\\
    &\le \frac{1}{2}\|\sigma^{-1}\|_{L(P_NH)}\lambda\sqrt{\E\int_0^t \|u(s;u_0)-v(s;v_0,u_0)\|^2_H\d s}.
\end{align*}
This establishes \eqref{ineq:error-in-law:W_TV(v,u)<|u-v|^2_H}, as claimed.

Turning to \eqref{ineq:error-in-law:W_TV(v,u)<1-epsilon}, we recall \cite[inequality (A.2)]{butkovsky2020generalized} that
\begin{align*}
    \W_{\TV}\big(\Law\big(W_{[0,t]} \big),\Law\big(\xi_{[0,t]} \big) \big)  \le 1-\frac{1}{2}\exp\Big\{-D_{KL}\big(\Law\big(W_{[0,t]} \big),\Law\big(\xi_{[0,t]} \big)\big) \Big\}.
\end{align*}
This together with \eqref{ineq:D_KL<|u-v|^2_H} yields the bound
\begin{align*}
    &\W_{\TV}\big(\Law\big(W_{[0,t]} \big),\Law\big(\xi_{[0,t]} \big) \big)  \\
    &\le 1-\frac{1}{2}\exp\Big\{-\frac{1}{2}\E\int_0^t \|\sigma^{-1}\big(\lambda P_N(u(s;u_0)-v(s;v_0,u_0)\big)\|^2_H\d s\Big\}\\
    &\le 1-\frac{1}{2}\exp\Big\{-\frac{1}{2} \|\sigma^{-1}\|^2_{L(P_NH)}\lambda^2  \E\int_0^t\|u(s;u_0)-v(s;v_0,u_0)\|^2_H\d s\Big\} .
\end{align*}
In turn, we deduce from \eqref{ineq:W_TV(u,v)<W_TV(W,xi)} that
\begin{align*}
     &\W_{\TV}\big( \Law(v(t;v_0)),P_t(v_0,\cdot)  \big)\\ &\le 1-\frac{1}{2}\exp\Big\{-\frac{1}{2} \|\sigma^{-1}\|^2_{L(P_NH)}\lambda^2  \E\int_0^t\|u(s;u_0)-v(s;v_0,u_0)\|^2_H\d s\Big\} .
\end{align*}
The proof is thus finished.
\end{proof}

To conclude Lemmas \ref{lem:contracting} and \ref{lem:d-small}, we will need the following intermediate results.

\begin{lemma} \label{lem:A}
     Given $u_0,v_0\in H$, let $u(t)=u(t;u_0)$ and $v(t)=v(t;v_0,u_0)$ respectively be the solutions of \eqref{eqn:KSE} and \eqref{eqn:KSE:v-coupling}. Let $\beta$ be the same constant from Lemma \ref{lem:Lyapunov}, $N$ be the constant satisfying \eqref{cond:W} and \eqref{cond:N}, and $\theta_\beta$ be defined in \eqref{form:theta(u,v)}. Then, the followings hold:
    
    1. There exists a positive constant $C=C(N,\beta,\gamma)$ such that\begin{align} \label{ineq:A:W_TV(v,u)<theta(u,v)}
    \W_{\TV}\big( \Law(v(t)),P_t(v_0,\cdot)  \big)\le C\,\theta_\beta(u_0,v_0),\quad t\ge 0,
    \end{align}

    2. There exist a positive constant $T=T(N,\beta,\gamma)$ and a function $r_1:\rbb^+\to\rbb^+$ satisfying $\lim_{t\to\infty}r_1(t)\to 0$ such that
    and
    \begin{align} \label{ineq:A:theta(v,u)<e^(-ct)theta(u_0,v_0)}
        \E\, \theta_\beta (u(t),v(t))\le r_1(t)\theta_{\beta/2}(u_0,v_0),\quad t\ge T.
    \end{align}

\end{lemma}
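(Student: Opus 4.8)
The plan is to run everything through the pathwise synchronization bound of Lemma~\ref{lem:coupling}: for part~1 one feeds it into the total-variation estimate \eqref{ineq:error-in-law:W_TV(v,u)<|u-v|^2_H} of Lemma~\ref{lem:error-in-law}, and for part~2 one combines it directly with the exponential moment bounds of Lemma~\ref{lem:moment:H:exponential}. Throughout, the free constant $C_2$ in Lemma~\ref{lem:coupling} is the lever: taking it large makes the relevant time integrals converge, at the only cost of forcing $N$ and $\lambda$ to be large, which is harmless since the asserted constants may depend on $N,\beta,\gamma$.

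\emph{Part 1.} By \eqref{ineq:error-in-law:W_TV(v,u)<|u-v|^2_H} it suffices to bound $\E\int_0^t\|u(s)-v(s)\|^2_H\,\d s$ by a constant multiple of $\theta_\beta(u_0,v_0)^2=\|u_0-v_0\|^2_H e^{2\beta\|u_0\|^2_H}$. Inserting the pathwise bound \eqref{ineq:|u-v|^2<|u_0-v_0|^2.e^(-ct+int.|Du|_infty)} reduces matters to estimating $\E\exp\{\sqrt L\int_0^s\|D^2u(r)\|_H\,\d r\}$. Since Lemma~\ref{lem:moment:H:exponential} controls an exponential moment of $\int\|D^2u\|^2_H$ rather than of $\int\|D^2u\|_H$, one first uses Young's inequality $\sqrt L\,\|D^2u\|_H\le\tfrac\delta2\|D^2u\|^2_H+\tfrac{L}{2\delta}$ pointwise and picks $\delta$ so that, after integrating in $r$, the quadratic term matches the exponent in \eqref{ineq:moment:H:exponential:int_0^t|D^2u|} applied with $\beta/2$ in place of $\beta$. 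This yields a bound of the shape $2\,e^{2\beta\|u_0\|^2_H}\,e^{(c_1(L,\gamma,\beta)+\frac\beta2C_0)s+\frac\beta2C_1}$, hence
\[
\E\int_0^t\|u(s)-v(s)\|^2_H\,\d s\;\le\;C(\beta,\gamma)\,\|u_0-v_0\|^2_H\,e^{2\beta\|u_0\|^2_H}\int_0^t e^{(-C_2+c_1(L,\gamma,\beta)+\frac\beta2C_0)s}\,\d s .
\]
Choosing $C_2>c_1(L,\gamma,\beta)+\tfrac\beta2C_0$ bounds the $s$-integral uniformly in $t$; plugging the result into \eqref{ineq:error-in-law:W_TV(v,u)<|u-v|^2_H} and taking the square root gives \eqref{ineq:A:W_TV(v,u)<theta(u,v)}, the square root being exactly what converts $e^{2\beta\|u_0\|^2_H}$ into the weight $e^{\beta\|u_0\|^2_H}$ of $\theta_\beta$.

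\emph{Part 2.} Starting from the square-root form of \eqref{ineq:|u-v|^2<|u_0-v_0|^2.e^(-ct+int.|Du|_infty)} and the definition \eqref{form:theta(u,v)},
\[
\theta_\beta(u(t),v(t))\;\le\;\|u_0-v_0\|_H\,e^{-\frac{C_2}{2}t}\exp\Big\{\tfrac{\sqrt L}{2}\int_0^t\|D^2u(s)\|_H\,\d s\Big\}e^{\beta\|u(t)\|^2_H},
\]
one takes expectations and applies the Cauchy--Schwarz inequality to separate off $e^{\beta\|u(t)\|^2_H}$, once more trading $\int\|D^2u\|_H$ for $\int\|D^2u\|^2_H$ via Young's inequality. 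This bounds $\E\,\theta_\beta(u(t),v(t))$ by $\|u_0-v_0\|_H$ times a factor $\sqrt{\E\exp\{\delta'\int_0^t\|D^2u\|^2_H\}}$ (controlled by \eqref{ineq:moment:H:exponential:int_0^t|D^2u|}, contributing $e^{ct}$ and $e^{2\beta'\|u_0\|^2_H}$ for a small auxiliary $\beta'$ we may pick freely) times a factor $\sqrt{\E\exp\{2\beta\|u(t)\|^2_H\}}$ (controlled by \eqref{ineq:moment:H:exponential:|u|^2_H}, contributing $e^{c\,e^{-t/4}\|u_0\|^2_H}$). Picking $\beta'$ with $2\beta'\le\beta/4$ and then $T$ so large that $c\,e^{-t/4}\le\beta/4$ for $t\ge T$, the total exponent on $\|u_0\|^2_H$ is at most $\beta/2$, so $\|u_0-v_0\|_H$ times that exponential is $\le\theta_{\beta/2}(u_0,v_0)$ --- which is precisely why the weaker weight $\theta_{\beta/2}$ must appear on the right of \eqref{ineq:A:theta(v,u)<e^(-ct)theta(u_0,v_0)}. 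The remaining purely $t$-dependent prefactor is $r_1(t)=C\exp\{(-\tfrac{C_2}{2}+c_2(L,\gamma,\beta,\beta'))t\}$, which tends to $0$ once $C_2>2c_2(L,\gamma,\beta,\beta')$; enlarging $C_2$ again only enlarges $N$ and $\lambda$.

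\emph{Main obstacle.} The calculations are routine, so the genuine work is the order of quantifiers. One must first freeze the smallness parameters ($\beta$ as in Lemma~\ref{lem:Lyapunov}, the auxiliary $\beta'$, and the Young-inequality parameters) so that every application of Lemma~\ref{lem:moment:H:exponential} is legitimate and so that the residual exponent on $\|u_0\|^2_H$ stays below $\beta$ in part~1 and below $\beta/2$ in part~2; only then may $C_2$ --- and hence $N$ and $\lambda$ --- be taken large enough to force the time integral in part~1 to converge and the prefactor $r_1(t)$ in part~2 to decay. The constants blow up as $C_2\to\infty$, but this is permitted because they are allowed to depend on $N,\gamma,\beta$.
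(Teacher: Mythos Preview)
Your proposal is correct and follows essentially the same approach as the paper: both parts feed the pathwise coupling bound \eqref{ineq:|u-v|^2<|u_0-v_0|^2.e^(-ct+int.|Du|_infty)} through Young's inequality to trade $\int\|D^2u\|_H$ for $\int\|D^2u\|^2_H$, then invoke the exponential moment bounds \eqref{ineq:moment:H:exponential:int_0^t|D^2u|} and \eqref{ineq:moment:H:exponential:|u|^2_H}, with Cauchy--Schwarz in part~2 to separate the two exponentials; the order-of-quantifiers discussion (fix the small parameters first, then enlarge $C_2$ and hence $N,\lambda$) matches the paper's logic exactly. The only cosmetic difference is in the specific fractions of $\beta$ chosen (you use $\beta/2$ in part~1 where the paper uses $\beta/8$, landing on $e^{\frac14\beta\|u_0\|^2_H}\le e^{\beta\|u_0\|^2_H}$), but this is immaterial.
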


\begin{lemma} \label{lem:B2}
 Given $u_0,v_0\in H$, let $u(t)=u(t;u_0)$ and $v(t)=v(t;v_0,u_0)$ respectively be the solutions of \eqref{eqn:KSE} and \eqref{eqn:KSE:v-coupling}. Let $\beta$ be the same constant from Lemma \ref{lem:Lyapunov}, $N$ be the constant satisfying \eqref{cond:W} and \eqref{cond:N}, and $\theta_\beta$ be defined in \eqref{form:theta(u,v)}. Then, the followings hold for all $R>0$:

 1. There exists a positive constant $\varepsilon=\varepsilon(N,\beta,R)>0$ such that
 \begin{align} \label{ineq:B2:W_TV<1-epsilon}
     \sup_{u_0,v_0\in B_R}\W_{\TV} \big( \Law(v(t)),P_t(v_0,\cdot)  \big) \le 1-\varepsilon,\quad t\ge 0.
 \end{align}

 2. 
 \begin{align} \label{ineq:B2:E.theta(u,v)<e^(-ct)}
      \sup_{u_0,v_0\in B_R}\E\, \theta_\beta(u(t),v(t)) \le 2R e^{\frac{1}{2}\beta R}r_1(t),\quad t\ge T,
 \end{align}
  where $T$ and $r_1(t)$ are respectively the time constant and the function from Lemma \ref{lem:A}, part 2.
\end{lemma}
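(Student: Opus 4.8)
The plan is to obtain both assertions by restricting the initial data in Lemma \ref{lem:A} and Lemma \ref{lem:error-in-law} to the ball $B_R$; the only genuinely new ingredient required is a bound on the accumulated $H$-distance of the coupled pair that is uniform in \emph{both} $t$ and $u_0,v_0\in B_R$.

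Part 2 should be essentially immediate. For $u_0,v_0\in B_R$ one has $\|u_0-v_0\|_H\le 2R$ and $e^{\frac{1}{2}\beta\|u_0\|^2_H}\le e^{\frac{1}{2}\beta R^2}$, hence $\theta_{\beta/2}(u_0,v_0)\le 2R e^{\frac{1}{2}\beta R^2}$; inserting this into \eqref{ineq:A:theta(v,u)<e^(-ct)theta(u_0,v_0)} yields \eqref{ineq:B2:E.theta(u,v)<e^(-ct)} for all $t\ge T$, with the same $T$ and $r_1$ as in Lemma \ref{lem:A}, part 2.

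For Part 1 I would first record a time-uniform moment bound on $\E\int_0^t\|u(s)-v(s)\|^2_H\,\d s$. Starting from the pathwise estimate \eqref{ineq:|u-v|^2<|u_0-v_0|^2.e^(-ct+int.|Du|_infty)} of Lemma \ref{lem:coupling}, with the contraction rate $C_2$ to be chosen large, and applying Young's inequality $\sqrt{L}\,\|D^2u(r)\|_H\le \tfrac{L}{2\beta\gamma}+\tfrac{\beta\gamma}{2}\|D^2u(r)\|^2_H$, one gets
\begin{align*}
   \int_0^t\|u(s)-v(s)\|^2_H\,\d s \le \|u_0-v_0\|^2_H\int_0^t e^{-(C_2-\frac{L}{2\beta\gamma})s}\exp\Big\{\tfrac{\beta\gamma}{2}\int_0^s\|D^2u(r)\|^2_H\,\d r\Big\}\,\d s.
\end{align*}
Taking expectations, interchanging with the $s$-integral, and bounding $\E\exp\{\tfrac{\beta\gamma}{2}\int_0^s\|D^2u\|^2_H\,\d r\}\le 2e^{4\beta\|u_0\|^2_H+\beta C_0 s+\beta C_1}$ via the exponential moment estimate \eqref{ineq:moment:H:exponential:int_0^t|D^2u|}, and finally choosing $C_2$ large enough that $C_2>\tfrac{L}{2\beta\gamma}+\beta C_0$, the remaining $s$-integral converges uniformly in $t$ and leaves
\begin{align*}
   \E\int_0^t\|u(s)-v(s)\|^2_H\,\d s \le \frac{2e^{\beta C_1}}{\,C_2-\frac{L}{2\beta\gamma}-\beta C_0\,}\,\|u_0-v_0\|^2_H\,e^{4\beta\|u_0\|^2_H},\qquad t\ge 0.
\end{align*}
Restricting to $u_0,v_0\in B_R$ bounds the right-hand side by a finite constant $M_R=M_R(N,\beta,\gamma,L,R)$ (with $C_2$, $\lambda=\lambda(C_2,\gamma)$, and $N\ge N(C_2,\gamma)$ now fixed). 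Plugging this into the second conclusion \eqref{ineq:error-in-law:W_TV(v,u)<1-epsilon} of Lemma \ref{lem:error-in-law} gives, for all $u_0,v_0\in B_R$ and all $t\ge 0$, that $\W_{\TV}(\Law(v(t)),P_t(v_0,\cdot))\le 1-\tfrac{1}{2}\exp\{-\tfrac{1}{2}\|\sigma^{-1}\|^2_{L(P_NH)}\lambda^2 M_R\}$, which is \eqref{ineq:B2:W_TV<1-epsilon} with $\varepsilon=\tfrac{1}{2}\exp\{-\tfrac{1}{2}\|\sigma^{-1}\|^2_{L(P_NH)}\lambda^2 M_R\}>0$.

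The main obstacle is precisely this time-uniform control of $\E\int_0^t\|u(s)-v(s)\|^2_H\,\d s$: one cannot simply feed the polynomial-type bound \eqref{ineq:A:W_TV(v,u)<theta(u,v)} into the argument, since on $B_R$ the quantity $C\,\theta_\beta(u_0,v_0)$ there may exceed $1$ and hence says nothing toward \eqref{ineq:B2:W_TV<1-epsilon}. The key is that Lemma \ref{lem:coupling} permits the contraction rate $C_2$ to be taken arbitrarily large, which lets us dominate the at-most-exponential growth rate $\beta C_0+\tfrac{L}{2\beta\gamma}$ coming from the exponential moment bound \eqref{ineq:moment:H:exponential:int_0^t|D^2u|}. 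Everything else is routine: bounding $\theta_\beta$ and $\theta_{\beta/2}$ on $B_R$ by explicit constants and invoking the already established Lemmas \ref{lem:A} and \ref{lem:error-in-law}.
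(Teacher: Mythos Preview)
Your proposal is correct and follows essentially the same route as the paper: Part 2 is obtained by restricting \eqref{ineq:A:theta(v,u)<e^(-ct)theta(u_0,v_0)} to $B_R$, and Part 1 by combining the pathwise contraction of Lemma \ref{lem:coupling} with the exponential moment bound \eqref{ineq:moment:H:exponential:int_0^t|D^2u|} to get a time-uniform bound on $\E\int_0^t\|u-v\|^2_H\,\d s$, which is then fed into \eqref{ineq:error-in-law:W_TV(v,u)<1-epsilon}. The only cosmetic difference is that the paper reuses the intermediate estimate \eqref{ineq:E.int_0^t|u-v|^2_H} already derived in the proof of Lemma \ref{lem:A} (with slightly different Young-inequality constants), whereas you re-derive it directly.
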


\begin{remark} \label{rem:A-B2} 1. We note that Lemma \ref{lem:A} and Lemma \ref{lem:B2} respectively correspond to \cite[Assumption A]{butkovsky2020generalized} and \cite[Assumption B2]{butkovsky2020generalized}. The former is an ingredient needed to establish the contracting property stated in Lemma \ref{lem:contracting}, whereas the latter is crucial to deduce the $d$-small property in Lemma \ref{lem:d-small}. In order to verify \cite[Assumptions A and B2]{butkovsky2020generalized}, one can draw upon the argument of \cite[Lemma 4.3]{butkovsky2020generalized} where it is typically sufficient to derive a path-wise bound of the form 
\begin{align}\label{cond:A}
    U(X(t)) + \int_0^t S(X(s))\d s \le U(X(0))+ct+ M(t),
\end{align}
where $U$ and $S$ are functionals of the process $X(t)$, and $M$ is a semi Martingale process, whose quadratic variation can be controlled by $S$. See \cite[Assumption H2]{butkovsky2020generalized}. This condition \eqref{cond:A} is shown to be the case for a variety of SPDE examples discussed in \cite[Section 4]{butkovsky2020generalized} as well as the dynamics in \cite{glatt2022short, nguyen2023ergodicity}. In contrast, from Lemma \ref{lem:moment:int.|D^2u|^2}, if we set 
\begin{align*}
    U(u(t))=\|u\|^2_H,\quad S(u(t)) = \frac{1}{2}\gamma\|D^2u(t)\|^2_H+\frac{1}{2}\|u(t)-\varphi_{b(t)}\|^2_{H_{2L}},
\end{align*}
then estimate \eqref{ineq:moment:int.|D^2u|^2} can be recast as
\begin{align*}
     U(u(t)) + \int_0^t S(u(s))\d s \le 4U(u(0))+C_0t+C_1+ M(t).
\end{align*}
In particular, this does not verify \eqref{cond:A} due to the factor of $4$ in front of $U(u(0))$. As a consequence, we have to modify the proof of \cite[Lemma 4.3]{butkovsky2020generalized} tailored to the KSE setting making use of the exponential moment bounds in Lemma \ref{lem:moment:H:exponential}.
 In literature, it is worth noting that an analogous modification of the arguments from \cite{butkovsky2020generalized} was developed in \cite{glatt2025long} within a general framework, by also exploiting exponential Lyapunov structures.

2. We remark that estimate \eqref{ineq:A:theta(v,u)<e^(-ct)theta(u_0,v_0)} is slightly different from analogous results in literature where the function $\theta_\beta$ also appears on the right hand side instead of $\theta_{\beta/2}$ as in \eqref{ineq:A:theta(v,u)<e^(-ct)theta(u_0,v_0)}. While this modification is interesting on its own, \eqref{ineq:A:theta(v,u)<e^(-ct)theta(u_0,v_0)} will be needed later when we invoke the generalized triangle inequality \eqref{ineq:W_(d_K,beta):triangle} to establish the proof of Lemma \ref{lem:contracting}. 
    
\end{remark}

For the sake of clarity, we will defer the proofs of Lemmas \ref{lem:A} and \ref{lem:B2} to the end of this subsection. Assuming that the results of Lemmas \ref{lem:A} and \ref{lem:B2} hold, let us now conclude Lemmas \ref{lem:contracting} and \ref{lem:d-small}, whose arguments are relatively short and follow closely those of \cite[Lemma 6.4]{nguyen2023ergodicity}.

\begin{proof}[Proof of Lemma \ref{lem:contracting}]
Fix $u_0,v_0\in H$ such that $d_{K,\beta}(u,v)<1$. Without loss of generality, we may assume that $\|u_0\|_H\le \|v_0\|_H$. This implies that
\begin{align*}
    d_{K,\beta}(u_0,v_0) = K\theta_{\beta}(u_0,v_0)\wedge K\theta_{\beta}(v_0,u_0) \wedge 1 & = K\|u_0-v_0\|_H  e^{\beta\|u_0\|^2_H} \wedge  K\|u_0-v_0\|_H  e^{\beta\|v_0\|^2_H} \\
    & = K\|u_0-v_0\|_H e^{\beta\|u_0\|^2_H}\\
    &= K\theta_\beta(u_0,v_0).
\end{align*}
Let $u(t)=u(t;u_0)$ and $v(t)=v(t;v_0,u_0)$ respectively be the solution of \eqref{eqn:KSE} and \eqref{eqn:KSE:v-coupling}. In light of Lemma \ref{lem:W_(d_K,beta):triangle}, cf. \eqref{ineq:W_(d_K,beta):triangle}, we have
\begin{align} \label{ineq:W_(d_K,beta)(P_t(u_0),P_t(v_0))}
    &\W_{d_{K,\beta}}\big( P_t(u_0;\cdot), P_t(v_0;\cdot) \big) \\
    &\le e^{2\beta/K^2}  \big[\W_{d_{K,2\beta}}\big( P_t(u_0;\cdot), \Law(v(t)) \big) +  \W_{d_{K,2\beta}}\big(\Law(v(t)), P_t(v_0;\cdot) \big)\big].
\end{align}
On the one hand, we invoke Lemma \ref{lem:A}, part 2, to infer a positive constant $T=T(N,\beta,\gamma)$ and $r_1(t)$ satisfying $\lim_{t\to \infty}r_1(t)=0$ such that the following holds for all $t\ge T$.
\begin{align*}
    \W_{d_{K,2\beta}}\big( P_t(u_0;\cdot), \Law(v(t)) \big) \le \E \, d_{K,2\beta}(u(t),v(t)) \le K\E\,\theta_{2\beta}(u(t),v(t)) \le Kr_1(t)\theta_{\beta}(u_0,v_0).
\end{align*}
On the other hand, in view of Lemma \ref{lem:W_(d_K,beta)<W_TV} and Lemma \ref{lem:A}, part 1, we have
\begin{align*}
    \W_{d_{K,2\beta}}\big(\Law(v(t)), P_t(v_0;\cdot) \big) \le \W_{\TV}\big(\Law(v(t)), P_t(v_0;\cdot) \big)\le C\theta_\beta(u_0,v_0),\quad t\ge 0.
\end{align*}
Altogether, we deduce for $t\ge T$
\begin{align*}
    \W_{d_{K,\beta}}\big( P_t(u_0;\cdot), P_t(v_0;\cdot) \big) 
    &\le e^{2\beta/K^2}  \big[Kr_1(t)\theta_{\beta}(u_0,v_0) +   C\theta_\beta(u_0,v_0)\big]\\
    &= e^{2\beta/K^2}\Big[r_1(t)+\frac{C}{K}\Big]K\theta_\beta(u_0,v_0)\\
    &= e^{2\beta/K^2}\Big[r_1(t)+\frac{C}{K}\Big]d_{K,\beta}(u_0,v_0).
\end{align*}
Since $r_1$ and $C$ are both independent of $K$, we may take $t$ and $K$ sufficiently large such that
\begin{align*}
    \alpha:=e^{2\beta/K^2}\Big[r_1(t)+\frac{C}{K}\Big] <1,
\end{align*}
implying 
\begin{align*}
     \W_{d_{K,\beta}}\big( P_t(u_0;\cdot), P_t(v_0;\cdot) \big)  \le \alpha d_{K,\beta}(u_0,v_0).
\end{align*}
This establishes \eqref{ineq:d-contracting}, thereby finishing the proof.
\end{proof}

\begin{proof}[Proof of Lemma \ref{lem:d-small}]
Similar to the argument of Lemma \ref{lem:contracting}, for all $u_0,v_0\in B_R$, we employ Lemma \ref{lem:B2}, cf. \eqref{ineq:B2:E.theta(u,v)<e^(-ct)}, to infer
\begin{align*}
    \W_{d_{K,2\beta}}\big( P_t(u_0;\cdot), \Law(v(t)) \big) \le \E \, d_{K,2\beta}(u(t),v(t))&\le  K\E\,\theta_{2\beta}(u(t),v(t))\\
    &\le 2KRe^{\beta R}r_1(t), \quad t\ge T,
\end{align*}
where $T$ and $r_1$ are as in Lemma \ref{lem:A}, part 2. Also, estimate \eqref{ineq:B2:W_TV<1-epsilon} and Lemma \ref{lem:W_(d_K,beta)<W_TV} imply that
\begin{align*}
    \W_{d_{K,2\beta}}\big(\Law(v(t)), P_t(v_0;\cdot) \big) \le \W_{\TV}\big(\Law(v(t)), P_t(v_0;\cdot) \big) \le 1-\varepsilon,\quad t\ge 0,
\end{align*}
for some positive $\varepsilon>0$. Together with \eqref{ineq:W_(d_K,beta)(P_t(u_0),P_t(v_0))}, we get
\begin{align*}
     &\W_{d_{K,\beta}}\big( P_t(u_0;\cdot), P_t(v_0;\cdot) \big) \\
    &\le e^{2\beta/K^2}  \big[\W_{d_{K,2\beta}}\big( P_t(u_0;\cdot), \Law(v(t)) \big) +  \W_{d_{K,2\beta}}\big(\Law(v(t)), P_t(v_0;\cdot) \big)\big]\\
    &\le e^{2\beta/K^2}  \big[2KRe^{\beta R}r_1(t)+1-\varepsilon],
\end{align*}
It is important to note that $K$ is independent of $\varepsilon,\beta$ and $R$ whereas $r_1$ defined in \eqref{form:r_1(t)} does not depend on $\varepsilon$ and $R$. This allows us to take $K$ and $t$ sufficiently large to infer the existence of a positive constant $\tilde{\alpha}$ such that 
\begin{align*}
    e^{2\beta/K^2}  \big[2Ke^{\frac{1}{2}\beta R}r_1(t)+1-\varepsilon]=1-\tilde{\alpha},
\end{align*}
whence
\begin{align*}
    \sup_{u_0,v_0\in B_R}\W_{d_{K,\beta}}\big( P_t(u_0;\cdot), P_t(v_0;\cdot) \big) \le 1-\tilde{\alpha},
\end{align*}
    as claimed.
\end{proof}

\begin{remark}
     We remark that the proofs of Lemmas \ref{lem:contracting} and \ref{lem:d-small} are similar to that of \cite[Theorem 2.4]{butkovsky2020generalized}. The main difference is that in our approach, we opt for leveraging direct estimates on Wasserstein distances collected in Appendix \ref{sec:aux}. This allows for the proofs to be more self-contained, without having to go through the technique of optimal coupling, which is employed in \cite[Theorem 2.4]{butkovsky2020generalized}.
\end{remark}
Now, we present the proofs of Lemma \ref{lem:A} by exploiting the auxiliary results from Lemmas \ref{lem:contracting} and \ref{lem:error-in-law}.

\begin{proof}[Proof of Lemma \ref{lem:A}]
    1. With regard to \eqref{ineq:A:W_TV(v,u)<theta(u,v)}, we employ Lemma \ref{lem:error-in-law}, estimate \eqref{ineq:error-in-law:W_TV(v,u)<|u-v|^2_H} to infer that
    \begin{align}  \label{ineq:A:W_TV(v,u)<theta(u,v):a}
    &\W_{\TV}\big( \Law(v(t;v_0)),P_t(v_0,\cdot)  \big) \notag \\
    &\le \frac{1}{2}\|\sigma^{-1}\|_{L(P_NH)}\lambda\Big(\E\int_0^t \|u(s;u_0)-v(s;v_0)\|^2_H\d s\Big)^{1/2},
    \end{align}
    while Lemma \ref{lem:coupling}, estimate \eqref{ineq:|u-v|^2<|u_0-v_0|^2.e^(-ct+int.|Du|_infty)} implies that
    \begin{align*}
    &\E\int_0^t \|u(s;u_0)-v(s;v_0)\|^2_H\d s\\    
    &\le \|u_0-v_0\|_H^2\int_0^t\E \exp\Big\{-C_2 s + \sqrt{L}\int_0^s \|D^2u(\ell;u_0)\|_H\d \ell\Big\}\d s.
    \end{align*}
Using Cauchy-Schwarz inequality, we have 
\begin{align} \label{ineq:L.int.|D^2u|_H:Cauchy}
      \sqrt{L}\int_0^s \|D^2u(\ell;u_0)\|_H\d \ell \le  4\frac{L}{\beta\gamma}s+\frac{1}{16}\beta\gamma\int_0^s \|D^2u(s;u_0)\|_H^2\d \ell.
\end{align}
It follows from Lemma \ref{lem:moment:H:exponential}, estimate \eqref{ineq:moment:H:exponential:int_0^t|D^2u|} that for all $\beta$ small enough,
\begin{align*}
    & \E \exp\Big\{-C_2 s + \sqrt{L}\int_0^s \|D^2u(\ell;u_0)\|_H\d \ell\Big\}\\
    &\le 2\exp\Big\{-C_2 s +4\frac{L}{\beta\gamma}s+\frac{1}{8}\beta C_0 s+\frac{1}{8}\beta C_1+\frac{1}{2}\beta\|u_0\|^2_H   \Big\},
\end{align*}
where $C_0$ and $C_1$ are the constants defined in \eqref{form:C_0}. So,
\begin{align*}
    &\E\int_0^t \|u(s;u_0)-v(s;v_0)\|^2_H\d s\\    
    &\le\|u_0-v_0\|_H^2 \times 2 \exp\Big\{\frac{1}{8}\beta C_1+\frac{1}{2}\beta\|u_0\|^2_H\Big\}\int_0^t\exp\Big\{ -C_2 s +4\frac{L}{\beta\gamma}s+\frac{1}{8}\beta C_0 s \Big\}\d s.
    \end{align*}
Recalling the condition on $N$ from \eqref{ineq:alpha_N>C}, we observe that if $N$ satisfies \eqref{cond:W} and \eqref{cond:N}, i.e.,
\begin{align*}
     \frac{\gamma}{2}\alpha_N^2 > \frac{1}{2\gamma}+\frac{1}{2}\Big(4\frac{L}{\beta\gamma}+\frac{1}{8}\beta C_0\Big),
\end{align*}
we may infer the existence of a positive constant $C_2$ such that 
\begin{align} 
    C_2  > 4\frac{L}{\beta\gamma}+\frac{1}{8}\beta C_0,
\end{align}
and as a consequence
\begin{align} \label{ineq:E.int_0^t|u-v|^2_H}
    \E\int_0^t \|u(s;u_0)-v(s;v_0)\|^2_H\d s
        &\le \|u_0-v_0\|_H^2 \times 2 \frac{\exp\Big\{\frac{1}{8}\beta C_1+\frac{1}{2}\beta\|u_0\|^2_H\Big\}}{C_2  - 4\frac{L}{\beta\gamma}-\frac{1}{8}\beta C_0}.
\end{align}
From \eqref{ineq:A:W_TV(v,u)<theta(u,v):a}, we get
\begin{align*}
    & \W_{\TV}\big( \Law(v(t;v_0)),P_t(v_0,\cdot)  \big) \le \frac{\sqrt{2}\|\sigma^{-1}\|_{L(P_NH)}\lambda\, e^{\frac{1}{16}\beta C_1}}{\sqrt{C_2  - 4\frac{L}{\beta\gamma}-\frac{1}{8}\beta C_0} }\|u_0-v_0\|_H e^{\frac{1}{4}\beta\|u_0\|^2_H}.
\end{align*}
Recalling $\theta_\beta(u_0,v_0)=\|u_0-v_0\|_H e^{\beta\|u_0\|^2_H}$ from \eqref{form:theta(u,v)}, we deduce
\begin{align*}
    \W_{\TV}\big( \Law(v(t;v_0)),P_t(v_0,\cdot)  \big) &\le \frac{\sqrt{2}\|\sigma^{-1}\|_{L(P_NH)}\lambda\, e^{\frac{1}{16}\beta C_1}}{\sqrt{C_2  - 4\frac{L}{\beta\gamma}-\frac{1}{8}\beta C_0} }\|u_0-v_0\|_H e^{\beta\|u_0\|^2_H}\\
    &= \frac{\sqrt{2}\|\sigma^{-1}\|_{L(P_NH)}\lambda\, e^{\frac{1}{16}\beta C_1}}{\sqrt{C_2  - 4\frac{L}{\beta\gamma}-\frac{1}{8}\beta C_0} }\theta_\beta(u_0,v_0).
\end{align*}
This produces \eqref{ineq:A:W_TV(v,u)<theta(u,v)}, as claimed.

2. Concerning \eqref{ineq:A:theta(v,u)<e^(-ct)theta(u_0,v_0)}, we employ \eqref{ineq:|u-v|^2<|u_0-v_0|^2.e^(-ct+int.|Du|_infty)} and \eqref{ineq:L.int.|D^2u|_H:Cauchy} once again to infer
\begin{align*}
    \|u(t)-v(t)\|_H &\le \|u_0-v_0\|_H \exp\Big\{ -\frac{1}{2}C_2 t +\frac{\sqrt{L}}{2}\int_0^t \|D^2u(s)\|_H\d s\Big\}\\
    &\le \|u_0-v_0\|_H \exp\Big\{ -\frac{1}{2}C_2 t + 2\frac{L}{\beta \gamma}t +\frac{1}{32}\beta\gamma\int_0^t \|D^2u(s)\|^2_H\d s\Big\}.
\end{align*}
So, we employ Holder's inequality to obtain
\begin{align*}
    &\E\, \theta_\beta(u(t),v(t))\\
    & = \E \Big[\|u(t)-v(t)\|_H e^{\beta\|u(t)\|^2_H}\Big]\\
    &\le \|u_0-v_0 \|_H \exp\Big\{-\frac{1}{2}C_2 t+2\frac{L}{\beta\gamma}t \Big\}\sqrt{\E\exp\Big\{\frac{1}{16}\beta\gamma\int_0^t\|D^2 u(s)\|^2_H\d s \Big\}\,\E\exp\Big\{2\beta\|u(t)\|^2_H\Big\} }.
\end{align*}
On the one hand, from Lemma \ref{lem:moment:H:exponential}, estimate  \eqref{ineq:moment:H:exponential:int_0^t|D^2u|}, we get
\begin{align*}
  \E\exp\Big\{\frac{1}{16}\beta\gamma\int_0^t\|D^2u(s)\|^2_H\d s \Big\} \le 2 \exp\Big\{\frac{1}{2}\beta\|u_0\|^2_H+\frac{1}{8}\beta C_0 t+\frac{1}{8}\beta C_1\Big\}  .
\end{align*}
On the other hand, estimate \eqref{ineq:moment:H:exponential:|u|^2_H} in Lemma \ref{lem:moment:H:exponential} implies that
\begin{align*}
   \E\exp\Big\{2\beta\|u(t)\|^2_H\Big\}\le 2 \exp\Big\{8\beta C_0+2 \beta C_1+ 8 \beta e^{-\frac{1}{4}t}\|u_0\|^2_H \Big\}.
\end{align*}
It follows that
\begin{align*}
    & \E\, \theta_\beta(u(t),v(t))\\
    &\le 2\|u_0-v_0 \|_H \exp\Big\{-\frac{1}{2}C_2 t+2\frac{L}{\beta\gamma}t \Big\}\sqrt{\exp\Big\{\frac{1}{2}\beta\|u_0\|^2_H+\frac{1}{8}\beta C_0 t+\frac{1}{8}\beta C_1\Big\} }\\
    &\qquad\qquad \times \sqrt{\exp\Big\{8\beta C_0+2 \beta C_1+ 8 \beta e^{-\frac{1}{4}t}\|u_0\|^2_H \Big\}}.
\end{align*}
By taking $T$ large enough such that
\begin{align} \label{cond:t}
    8 e^{-\frac{1}{4}T} \le \frac{1}{2},
\end{align}
we obtain for all $t\ge T$
\begin{align*}
     \E\, \theta_\beta(u(t),v(t))
    &\le  \theta_{\beta/2}(u_0,v_0)\,r_1(t),
\end{align*}
where 
\begin{align} \label{form:r_1(t)}
    r_1(t): = 2 \exp\Big\{-\frac{1}{2}\Big(C_2 -4\frac{L}{\beta\gamma}- \frac{1}{8}\beta C_0\Big)t\Big\}\exp\Big\{4\beta C_0+\frac{17}{16} \beta C_1 \Big\}.
\end{align}
This establishes \eqref{ineq:A:theta(v,u)<e^(-ct)theta(u_0,v_0)}, thereby concluding the proof.
\end{proof}

Lastly, we provide the proof of Lemma \ref{lem:B2}, which also exploits the results of Lemma \ref{lem:error-in-law}. 

\begin{proof}[Proof of Lemma \ref{lem:B2}]
    1. Recall from Lemma \ref{lem:error-in-law}, estimate \eqref{ineq:error-in-law:W_TV(v,u)<1-epsilon} that 
    \begin{align*}
     &\W_{\TV}\big( \Law(v(t;v_0)),P_t(v_0,\cdot)  \big)\\ &\le 1-\frac{1}{2}\exp\Big\{-\frac{1}{2} \|\sigma^{-1}\|^2_{L(P_NH)}\lambda^2  \E\int_0^t\|u(s;u_0)-v(s;v_0)\|^2_H\d s\Big\}\\
     &\le 1-\frac{1}{2}\exp\Big\{- \|\sigma^{-1}\|^2_{L(P_NH)}\lambda^2  \|u_0-v_0\|_H^2  \frac{e^{\frac{1}{8}\beta C_1+\frac{1}{2}\beta\|u_0\|^2_H}}{C_2  - 4\frac{L}{\beta\gamma}-\frac{1}{8}\beta C_0}\Big\}.
\end{align*}
In the last implication, we employed estimate \eqref{ineq:E.int_0^t|u-v|^2_H}. It follows that for all $u_0,v_0\in B_R$, we have
\begin{align*}
     &\W_{\TV}\big( \Law(v(t;v_0)),P_t(v_0,\cdot)  \big)\\ &\le 1-\frac{1}{2}\exp\Big\{-4R^2 \|\sigma^{-1}\|^2_{L(P_NH)}\lambda^2  \frac{e^{\frac{1}{8}\beta C_1+\frac{1}{2}\beta R^2}}{C_2  - 4\frac{L}{\beta\gamma}-\frac{1}{8}\beta C_0}\Big\}=:1-\varepsilon.
\end{align*}
This establishes \eqref{ineq:B2:W_TV<1-epsilon}, as claimed.

2. Turning to \eqref{ineq:B2:E.theta(u,v)<e^(-ct)}, in light of \eqref{ineq:A:theta(v,u)<e^(-ct)theta(u_0,v_0)} from Lemma \ref{lem:A}, we infer for all $u_0,v_0\in B_R$ that
\begin{align*}
     \E\, \theta_\beta(u(t),v(t))  \le  \theta_{\beta/2}(u_0,v_0)\,r_1(t)&= \|u_0-v_0\|_H e^{\frac{1}{2}\beta \|u_0\|^2_H} r_1(t)\\
     &\le 2R e^{\frac{1}{2}\beta R}r_1(t),\quad t\ge T,
\end{align*}
where $T$ is the time constant as in \eqref{cond:t} and $r_1(t)$ is the function defined in \eqref{form:r_1(t)}. The proof is thus finished.
\end{proof}

\section{Acknowledgements} 
 The authors would like to thank anonymous referees for their providing a thorough review of this work. We appreciate their careful reading and insightful comments, which have improved the manuscript. Peng Gao would like to thank the financial support of the China Scholarship
Council (No. 202406620219). Peng Gao is supported by NSFC (Grant No. 12371188), 
Natural Science Foundation of Jilin Province (Grant No. YDZJ202201ZYTS306), and the Fundamental Research Funds for the Central Universities (Grant
No. 2412022ZD006).

\appendix

\section{Estimates on periodic functions} \label{sec:periodic_functions}
In this section, we state and prove useful inequalities on periodic functions through Lemma \ref{lem:R(u)>|D^2u|^2:antisymmetric} and Lemma \ref{lem:R(u)}. In turn, they were invoked to handle the difficulty of the nonlinearity with arbitrary diffusion parameter $\gamma$ in the stochastic KSE. The argument that we employ is closely adapting to the technique of \cite{collet1993global} dealing with the same issue in deterministic settings. Recalling $\A_L$, the space of odd, $L-$periodic function defined in \eqref{form:A_L}, the idea can be summarized as follows: we first construct an auxiliary function $\f\in \A_L$, which can be combined with $D^4u$ to create a dissipation effect for all $u\in \A_L\cap H^2_L$. This is presented in Lemma \ref{lem:R(u)>|D^2u|^2:antisymmetric}. Then, by modifying the function $\f$ appropriately, we extend the estimate of Lemma \ref{lem:R(u)>|D^2u|^2:antisymmetric} to cover any generic function $u$ that is not necessarily antisymmetric. The precise statement is given in Lemma \ref{lem:R(u)}.

\begin{lemma} \label{lem:R(u)>|D^2u|^2:antisymmetric}
 For all $L>0$, there exists a function $\f\in \A_L$ such that the following holds
    \begin{align} \label{ineq:R(u)>|D^2u|^2:antisymmetric}
       \gamma\|D^2 u\|^2_{H_L}-\|Du\|^2_{H_L}+\frac{1}{2}\la u^2,\f'\ra_{H_L}\ge \frac{1}{4}\gamma\|D^2u\|^2_{H_L} +\frac{1}{2}\|u\|^2_{H_L},\quad u\in \A_L\cap H^2_L.
    \end{align}
\end{lemma}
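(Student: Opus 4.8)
The plan is to reduce the stated inequality to a purely quadratic coercivity estimate, and then to build $\varphi$ by concentrating the unavoidable ``bad part'' of $\varphi'$ at the common zero set of odd periodic functions, exactly as in the background-function method of \cite{collet1993global}.

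\medskip
\noindent\textbf{Reduction.} First I would record two structural facts. (i) If $u\in\A_L$ is $L$-periodic, then $u(0)=u(\pm L/2)=0$, since oddness and periodicity force $u(L/2)=u(-L/2)=-u(L/2)$; hence all integrations by parts below are boundary-free. (ii) Applying the weighted Young inequality $\|Du\|^2_{H_L}\le\tfrac\gamma2\|D^2u\|^2_{H_L}+\tfrac1{2\gamma}\|u\|^2_{H_L}$ to the left-hand side of \eqref{ineq:R(u)>|D^2u|^2:antisymmetric}, it suffices to produce $\varphi\in\A_L$ with
\begin{align*}
 \tfrac\gamma4\|D^2u\|^2_{H_L}+\tfrac12\langle u^2,\varphi'\rangle_{H_L}\ \ge\ \tfrac{1+\gamma}{2\gamma}\|u\|^2_{H_L},\qquad u\in\A_L\cap H^2_L.
\end{align*}
Using $\langle u^2,\varphi'\rangle_{H_L}=-2\langle uDu,\varphi\rangle_{H_L}$ and $\|u\|^2_{H_L}=-2\langle uDu,x\rangle_{H_L}$ (the latter holding precisely because $u(\pm L/2)=0$), this is the same as $\tfrac\gamma4\|D^2u\|^2_{H_L}+\langle uDu,\,x-\varphi\rangle_{H_L}\ge\tfrac1{2\gamma}\|u\|^2_{H_L}$, which already reveals the mechanism: $\varphi$ should be close to the $L$-periodic sawtooth $x$, corrected so that $\langle uDu,x-\varphi\rangle_{H_L}$ becomes genuinely coercive.

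\medskip
\noindent\textbf{Construction of $\varphi$.} I would take $\varphi'$ to be a smooth, even, $L$-periodic function equal to a large constant $A=A(\gamma)$ on $[-L/2,L/2]$ outside $\delta$-neighbourhoods of $0$ and of $L/2$, and dipping to a sharp negative minimum of order $-AL/\delta$ on those neighbourhoods, normalised so that $\int_{-L/2}^{L/2}\varphi'=0$ (which is exactly the condition that $\varphi:=\int_0^\cdot\varphi'$ be $L$-periodic; evenness of $\varphi'$ then makes $\varphi$ odd, so $\varphi\in\A_L$). The three parameters are chosen in the order $A$, then a Fourier cutoff $K_0$, then $\delta$.

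\medskip
\noindent\textbf{Verification.} Split $u=u_<+u_>$ into Fourier modes $\le K_0$ and $>K_0$, so that $\langle u^2,\varphi'\rangle_{H_L}=\langle u_<^2,\varphi'\rangle_{H_L}+2\langle u_<u_>,\varphi'\rangle_{H_L}+\langle u_>^2,\varphi'\rangle_{H_L}$. For the low part, $\varphi'=A$ off the spikes yields $\tfrac A2\|u_<\|^2_{H_L}$, while the contribution from the $\delta$-neighbourhoods is $\lesssim\|\varphi'\|_{L^\infty}\,\delta\,\|u_<\|_{L^\infty(\text{spike})}^2$; since $u_<$ vanishes at $0$ and $L/2$ and is band-limited, a Bernstein-type bound gives $\|u_<\|_{L^\infty(\text{spike})}\lesssim\delta\,K_0^{3/2}L^{-3/2}\|u_<\|_{H_L}$, so this error is $\lesssim A\,\delta^2K_0^3L^{-2}\|u_<\|^2_{H_L}$, negligible once $\delta\lesssim LK_0^{-3/2}$; taking $A$ large then dominates $\tfrac{1+\gamma}{2\gamma}\|u_<\|^2_{H_L}$. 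The cross term and the high term are only bounded below by $-\|\varphi'\|_{L^\infty}\|u_<\|_{H_L}\|u_>\|_{H_L}$ and $-\tfrac12\|\varphi'\|_{L^\infty}\|u_>\|^2_{H_L}$, i.e.\ by $-C\tfrac{AL}{\delta}\|u_>\|_{H_L}(\|u_<\|_{H_L}+\|u_>\|_{H_L})$; a further Young split throws part onto the already-available margin in $\|u_<\|^2_{H_L}$ and the rest onto $\tfrac\gamma4\|D^2u_>\|^2_{H_L}\ge\tfrac\gamma4(2\pi K_0/L)^4\|u_>\|^2_{H_L}$, which suffices provided $\tfrac\gamma4(2\pi K_0/L)^4\gtrsim(AL/\delta)^2$. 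The two demands $\delta\lesssim LK_0^{-3/2}$ and $\delta\gtrsim AL^3\gamma^{-1/2}K_0^{-2}$ are jointly satisfiable as soon as $K_0\gtrsim A^2L^4\gamma^{-1}$; fixing such a $K_0$ and then a valid $\delta$ closes the argument.

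\medskip
\noindent\textbf{Main obstacle.} The difficulty is entirely that periodicity forces $\int_{-L/2}^{L/2}\varphi'=0$, so $\varphi'$ cannot be a coercive positive constant and $\langle u^2,\varphi'\rangle_{H_L}$ has no obvious sign. The resolution is to hide the compensating negative mass of $\varphi'$ in a thin neighbourhood of $\{0,L/2\}$, where \emph{every} odd $L$-periodic function is quadratically small; the price is a large $\|\varphi'\|_{L^\infty}$, so the genuinely delicate points are (a) controlling the high-frequency cross-interaction it generates via the fourth-order term, and (b) keeping the hierarchy of constants ($A$ depending on $\gamma$, then $K_0$ on $A,L,\gamma$, then $\delta$ on $K_0$) mutually consistent so as to avoid circularity.
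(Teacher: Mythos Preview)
Your proposal is correct and the hierarchy of choices closes (up to harmless slippage in the exponents of $A$ in the final compatibility check, which does not matter since $A$ is fixed once $\gamma$ is). It is, however, a genuinely different route from the paper's.

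The paper works entirely in Fourier space, following \cite{collet1993global}. Writing $\psi_n$ for the Fourier coefficients of $\varphi'$, it expands $\langle u^2,\varphi'\rangle_{H_L}$ as a bilinear form in the $u_n$ and splits the resulting expression into a diagonal part $I_1=\sum_n(2\gamma|nq|^4-2|nq|^2+L^{-1/2}\psi_{2n})u_n^2$ and an off-diagonal part $I_2$ involving $\psi_{k+n}-\psi_{k-n}$. The choice $\psi_n=\sqrt{L}(1+\gamma^{-1})f(n/2M)$ with $f$ a smooth cutoff makes $\psi_{2n}$ a large constant on low modes (so $I_1\ge\tfrac12\gamma\|D^2u\|^2+\tfrac12\|u\|^2$) while the smoothness of $f$ forces $|\psi_{k+n}-\psi_{k-n}|\lesssim n/M$, which controls $I_2$ by $\tfrac14\gamma\|D^2u\|^2$ once $M$ is large. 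No preliminary interpolation of $\|Du\|^2$ is used; the quantity $\gamma|nq|^4-|nq|^2$ is handled mode by mode.

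Your physical-space construction is closer in spirit to the original background method of \cite{nicolaenko1985some,goodman1994stability}: you put the inevitable negative mass of $\varphi'$ where every odd periodic function is quadratically small, and pay for the resulting large $\|\varphi'\|_{L^\infty}$ with the fourth-order dissipation on high modes. The Fourier approach in the paper is slightly cleaner bookkeeping (one parameter $M$ rather than the chain $A\to K_0\to\delta$) and makes the $L$-dependence of $\varphi$ more transparent; your approach has the advantage of being geometrically explicit about \emph{where} the bad part of $\varphi'$ lives and why it is harmless.
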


\begin{remark}
    We note that \cite[Proposition 2.1]{collet1993global} is a special case of Lemma \ref{lem:R(u)>|D^2u|^2:antisymmetric} for the instance $\gamma=1$. In order to establish estimate \eqref{ineq:R(u)>|D^2u|^2:antisymmetric} for any $\gamma>0$, we will slightly modify the proof of \cite[Proposition 2.1]{collet1993global}.
\end{remark}

\begin{proof}[Proof of Lemma \ref{lem:R(u)>|D^2u|^2:antisymmetric}]

Setting $q=\frac{2\pi}{L}$, we recast $u$ using the Fourier representation
\begin{align*}
    u(x)=\frac{\i}{\sqrt{L}}\sum_{n\in \zbb} u_n e^{\i nq x},\quad x\in\rbb.
\end{align*}
Since $u$ is an odd function, observe that
\begin{align*}
    u(-x) &= \frac{\i}{\sqrt{L}}\sum_{n\in\zbb} u_n[\cos(nqx)-\i\sin(nq x) ]= -\frac{\i}{\sqrt{L}}\sum_{n\in\zbb} u_n[\cos(nqx)+\i\sin(nq x) ].
\end{align*}
It follows that for all $n\ge0$
\begin{align*}
    u_{n}+u_{-n}=0,\quad\text{and}\quad u_0=0,
\end{align*}
whence, 
\begin{align*}
    u(x) = -\frac{2}{\sqrt{L}}\sum_{n\ge 1} u_n \sin(nqx).
\end{align*}
In particular, this implies that $u_n\in\rbb$. Likewise, given $\f\in\A_L$ with the representation
\begin{align*}
    \f(x)=\frac{\i}{\sqrt{L}}\sum_{n\in \zbb} \f_n e^{\i nq x},\quad x\in\rbb,
\end{align*}
it holds that $\f_n=-\f_{-n}\in\rbb$. Taking derivative on both sides produces
\begin{align*}
   \f'(x) = -\frac{1}{\sqrt{L}}\sum_{n\in\zbb} nq\f_n  e^{\i nqx}.
\end{align*}
Setting $\psi_n:=-nq\f_n$, we recast $\f'$ as
\begin{align} \label{form:phi'}
    \f'(x) = \frac{1}{\sqrt{L}}\sum_{n\in\zbb} \psi_n  e^{\i nqx},
\end{align}
and deduce 
\begin{align} \label{eqn:psi_n=psi_(-n)}
    \psi_n = \psi_{-n} \in\rbb,\quad\text{and}\quad \psi_0=0.
\end{align}

Next, considering the inner product $\la u^2,\f'\ra_{H_L}$, we have 
\begin{align*}
    \la u^2,\f'\ra_{H_L} & = \frac{1}{L\sqrt{L}}\sum_{n,k,\ell\in\zbb}\int_{-L/2}^{L/2} e^{\i (n+k+\ell)qx}\d x\, u_nu_k\psi_\ell\\
    & = \frac{1}{\sqrt{L}}\sum_{n+k+\ell=0}u_nu_k\psi_\ell .
\end{align*}
We employ the identities $u_n=-u_{-n}$, and $\psi_n=\psi_{-n}$, $\psi_0=0$ from \eqref{eqn:psi_n=psi_(-n)} and obtain further that
\begin{align*}
   \sum_{n+k+\ell=0}u_nu_k\psi_\ell =\sum_{n,k\in\zbb}u_nu_k\psi_{-(k+n)}&=\sum_{n,k\in\zbb}u_nu_k\psi_{|k+n|}  \\
   &=2\sum_{n,k\ge 1} u_nu_k\big(\psi_{k+n}-\psi_{|k-n|}\big)\\
  &= 2\sum_{n\ge 1}u_n^2\psi_{2n}+4\sum_{k>n\ge 1} u_nu_k\big(\psi_{k+n}-\psi_{k-n}\big),
\end{align*}
implying
\begin{align*}
   \sqrt{L}\la u^2,\f'\ra_{H_L}  =   2\sum_{n\ge 1}u_n^2\psi_{2n}+4\sum_{k>n\ge 1} u_nu_k\big(\psi_{k+n}-\psi_{k-n}\big).
\end{align*}
Altogether, we get
\begin{align} \label{eqn:R(u)=I_1+I_2}
     &  \gamma\|D^2 u\|^2_{H_L}-\|Du\|^2_{H_L}+\frac{1}{2}\la u^2,\f'\ra_{H_L} \notag \\
    &=\sum_{n\ge 1}u_n^2\Big( 2\gamma |nq|^4-2|nq|^2+\frac{1}{\sqrt{L}}\psi_{2n}  \Big) +\frac{2}{\sqrt{L}}\sum_{k>n\ge 1}u_nu_k\big(\psi_{k+n}-\psi_{k-n}\big) \notag \\
    & = I_1+I_2.
\end{align}
We emphasize that the above identity holds for all $\varphi\in \A_L$ where $\psi_n$ is as in \eqref{form:phi'}-\eqref{eqn:psi_n=psi_(-n)}. Now, we claim that there exists a choice of $\varphi\sim\{\varphi_n\}_n$ that verifies \eqref{ineq:R(u)>|D^2u|^2:antisymmetric}. For this, it will
be sufficient to construct $\f'\sim\{\psi_n\}_n$. To see this, we fix a positive integer $M=M(\gamma)$ large enough such that
\begin{align} \label{ineq:gamma|nq|^4>2|nq|^2+1}
    \gamma|nq|^4>2|nq|^2+1,\quad n>M.
\end{align}
Also, let $f:\rbb\to[0,\infty)$ be a smooth cut-off function satisfying 
\begin{align*}
    f(x) = \begin{cases}
        1,& |x|\le 1,\\
        \text{monotonicity},& 1\le |x|\le 2,\\
        0,& |x|\ge 2.
    \end{cases}
\end{align*}
With the above choices of $M$ and $f$, we define 
\begin{align*} 
    \psi_n = \sqrt{L}\Big(\frac{1}{\gamma}+1\Big)f\Big( \frac{n}{2M}\Big) .
\end{align*}
Concerning $I_1$ on the right-hand side of \eqref{eqn:R(u)=I_1+I_2}, we note that $\psi_{2n}$ satisfies
\begin{align} \label{eqn:psi_2n}
    \frac{1}{\sqrt{L}}\psi_{2n} = \begin{cases}
        \frac{1}{\gamma}+1,& 1\le n\le M,\\
        \ge 0,& n>M.
    \end{cases} 
    \end{align}
Observe that by \eqref{ineq:gamma|nq|^4>2|nq|^2+1} and \eqref{eqn:psi_2n}, the following holds 
\begin{align*}
    2\gamma |nq|^4-2|nq|^2+\frac{1}{\sqrt{L}}\psi_{2n}  \ge \gamma|qn|^4+1,\quad n\ge 1,
\end{align*}
whence
\begin{align} \label{ineq:R(u):I_1}
    I_1 \ge \sum_{n\ge 1}(\gamma|nq|^4+1)u_n^2 =\frac{1}{2} \big(\gamma\|D^2u\|^2_{H_L}+\|u\|^2_{H_L}\big) .
\end{align}
With regard to $I_2$ on the right-hand side of \eqref{eqn:R(u)=I_1+I_2}, we claim that 
\begin{align} \label{ineq:|psi_(k+n)-psi_(k-n)|}
     \frac{1}{\sqrt{L}}|\psi_{k+n}-\psi_{k-n}| & \le |f'|_\infty \Big(\frac{1}{\gamma}+1\Big)\frac{n}{M}.
\end{align}
Indeed, on the one hand, if $k+n\le 2M$,
\begin{align*}
    \psi_{k+n}-\psi_{k-n}=0.
\end{align*}
On the other hand, by the Mean Value Theorem, if $k+n>k-n\ge 2M$,
\begin{align*}
   \frac{1}{\sqrt{L}} |\psi_{k+n}-\psi_{k-n}|\le|f'|_\infty  \Big(\frac{1}{\gamma}+1\Big) \Big(\frac{k+n}{2M}-\frac{k-n}{2M}\Big) =  |f'|_\infty\Big(\frac{1}{\gamma}+1\Big)\frac{n}{M}.
\end{align*}
Otherwise, if $k+n>  2M \ge k-n$,
\begin{align*}
   \frac{1}{\sqrt{L}} |\psi_{k+n}-\psi_{k-n}| & =  \Big(\frac{1}{\gamma}+1\Big)\Big( f\Big( \frac{k+n}{2M} \Big)-f(1)\Big) \\
    &\le  |f'|_\infty \Big(\frac{1}{\gamma}+1\Big) \Big(\frac{k+n}{2M}-1\Big) \le |f'|_\infty \Big(\frac{1}{\gamma}+1\Big)\frac{n}{M}.
\end{align*}
In the last inequality above, we employed the fact that $k\le n+2M$. Turning back to $I_2$, we employ Cauchy-schwarz's inequality together with \eqref{ineq:|psi_(k+n)-psi_(k-n)|} to infer
\begin{align*}
    \frac{1}{2}I_2= \frac{1}{\sqrt{L}}\sum_{k>n\ge 1}u_nu_k\big(\psi_{k+n}-\psi_{k-n}\big) & \le  |f'|_\infty \Big(\frac{1}{\gamma}+1\Big)\frac{1}{M} \sum_{k>n\ge 1}nu_nu_k\\
    &\le  |f'|_\infty \Big(\frac{1}{\gamma}+1\Big)\frac{1}{M}\sum_{n\ge 1}nu_n \sqrt{\sum_{k\ge 1}\frac{1}{k^4} \sum_{k\ge 1}k^4u_k^2}\\
    &\le  |f'|_\infty \Big(\frac{1}{\gamma}+1\Big)\frac{1}{M}\sqrt{\sum_{n\ge 1}\frac{1}{n^2}\sum_{n\ge 1}n^4u_n^2}\cdot  \sqrt{\sum_{k\ge 1}\frac{1}{k^4} \sum_{k\ge 1}k^4u_k^2}\\
    &=  |f'|_\infty \Big(\frac{1}{\gamma}+1\Big)\sqrt{\sum_{n\ge 1}\frac{1}{n^2}\sum_{n\ge 1}\frac{1}{n^4} }\cdot\frac{1}{Mq^4}\|D^2u\|^2_{H_L}.
\end{align*}
So, provided $M$ is sufficiently large such that
\begin{align*}
    M>\frac{8}{q^4\gamma} |f'|_\infty \Big(\frac{1}{\gamma}+1\Big)\sqrt{\sum_{n\ge 1}\frac{1}{n^2}\sum_{n\ge 1}\frac{1}{n^4} },
\end{align*}
we get
\begin{align} \label{ineq:R(u):I_2}
    |I_2| \le \frac{\gamma}{4}\|D^2u\|^2_{H_L}.
\end{align}
Now, from \eqref{eqn:R(u)=I_1+I_2}, \eqref{ineq:R(u):I_1} and \eqref{ineq:R(u):I_2}, we obtain
\begin{align*}
  \gamma\|D^2 u\|^2_{H_L}-\|Du\|^2_{H_L}+\frac{1}{2}\la u^2,\f'\ra_{H_L}\ge \frac{1}{4}\gamma\|D^2u\|^2_{H_L}+\frac{1}{2}\|u\|^2_{H_L}.
\end{align*}
This establishes \eqref{ineq:R(u)>|D^2u|^2:antisymmetric}, thereby finishing the proof.

\end{proof}

\begin{lemma} \label{lem:R(u)}
 For all $L>0$, there exists a function $\f\in \A_{2L}$ such that the following holds
    \begin{align} \label{ineq:R(u)>|D^2u|^2}
       &\gamma\|D^2 u\|^2_{H_{2L}}-\|Du\|^2_{H_{2L}}+\frac{1}{2}\la u^2,\f'(\cdot+b)\ra_{H_{2L}}  \notag  \\
       &\ge \frac{1}{4}\gamma\|D^2u\|^2_{H_{2L}} +\frac{1}{2}\|u\|^2_{H_{2L}}-\frac{1}{4L}\big|\la u,\f'(\cdot+b)\ra_{H_{2L}}  \big|^2,
    \end{align}
    for all $b\in\rbb$, $u\in H^2_L$.
\end{lemma}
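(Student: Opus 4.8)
All three $H_{2L}$-norms are invariant under the translation $u\mapsto u(\cdot-b)$, and a change of variables together with $2L$-periodicity gives $\la u^2,\f'(\cdot+b)\ra_{H_{2L}}=\la (u(\cdot-b))^2,\f'\ra_{H_{2L}}$ and $\la u,\f'(\cdot+b)\ra_{H_{2L}}=\la u(\cdot-b),\f'\ra_{H_{2L}}$. Hence \eqref{ineq:R(u)>|D^2u|^2} for all $b$ is equivalent to the case $b=0$ for every $v\in H^2_L$; the parameter $b$ is then reinstated at no cost, and is precisely the freedom exploited in Lemma~\ref{lem:moment:int.|D^2u|^2} so that the correction term annihilates the drift $-\la u,D\f_b\ra_{H_{2L}}b'$.

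\textbf{Step 2 (splitting off the symmetric part).} Write $v=v_o+v_e$ with $v_o(x)=\tfrac12(v(x)-v(-x))$ and $v_e(x)=\tfrac12(v(x)+v(-x))$; both lie in $H^2_L\subset H^2_{2L}$, with $v_o\in\A_{2L}$. Since $\f$ is odd, $\f'$ is even, so the mixed term in $\la v^2,\f'\ra_{H_{2L}}$ vanishes and $\la v_o,\f'\ra_{H_{2L}}=0$; thus $\la v^2,\f'\ra_{H_{2L}}=\la v_o^2,\f'\ra_{H_{2L}}+\la v_e^2,\f'\ra_{H_{2L}}$, $\la v,\f'\ra_{H_{2L}}=\la v_e,\f'\ra_{H_{2L}}$, and every $H_{2L}$-norm splits orthogonally. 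The inequality to be proved therefore decouples into one inequality for $v_o$ and one for $v_e$ (setting $v_e=0$ or $v_o=0$ shows both are necessary, and adding them gives the general case). For $v_o\in\A_{2L}\cap H^2_{2L}$, Lemma~\ref{lem:R(u)>|D^2u|^2:antisymmetric} applied with $L$ replaced by $2L$ yields exactly the clean bound with no correction term, and fixes the antisymmetric building block of $\f$.

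\textbf{Step 3 (the even part — the crux).} It remains to establish, for the even part,
\[
\gamma\|D^2v_e\|^2_{H_{2L}}-\|Dv_e\|^2_{H_{2L}}+\tfrac12\la v_e^2,\f'\ra_{H_{2L}}\ \ge\ \tfrac14\gamma\|D^2v_e\|^2_{H_{2L}}+\tfrac12\|v_e\|^2_{H_{2L}}-\tfrac1{4L}\bigl|\la v_e,\f'\ra_{H_{2L}}\bigr|^2 .
\]
I would rerun the Fourier computation of Lemma~\ref{lem:R(u)>|D^2u|^2:antisymmetric}, now with $v_e$ a pure cosine series. The diagonal modes are controlled exactly as before, by the perfect-square identity $(\sqrt\gamma|nq|^2-1/\sqrt\gamma)^2\ge0$ encoded in the tuned coefficient $1/\gamma+1$, leaving a $\tfrac14\gamma\|D^2v_e\|^2_{H_{2L}}$ worth of slack. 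The genuinely new feature is the off-diagonal interaction: where the antisymmetric case produced the telescoping \emph{difference} $\psi_{k+n}-\psi_{|k-n|}$, the even case produces the \emph{sum} $\psi_{k+n}+\psi_{|k-n|}$, which is $O(1)$, not $O(1/M)$. I would split $\psi_{k+n}+\psi_{|k-n|}=(\psi_{k+n}-\psi_{|k-n|})+2\psi_{|k-n|}$: the difference part is again $O(1/M)$ by the mean value theorem and is absorbed into the diagonal slack; the remaining low-frequency part $2\sum\psi_{|k-n|}a_na_k$, combined with the leftover low-frequency diagonal mass, should assemble into a nonnegative quadratic form equal to $\tfrac1{4L}|\la v_e,\f'\ra_{H_{2L}}|^2$ after moving it to the right-hand side. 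This forces $\f$ to be \emph{modified}, relative to the purely antisymmetric construction, in its low modes so that the assembled square matches this target exactly while leaving the odd-part estimate of Step~2 untouched.

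\textbf{Main obstacle.} The difficulty is entirely in Step~3: unlike the antisymmetric situation, the nonlinear overlap $\la v_e^2,\f'\ra_{H_{2L}}$ has no definite sign, and its low-frequency off-diagonal part does not become small, so one must extract from it precisely the square $\tfrac1{4L}|\la v_e,\f'\ra_{H_{2L}}|^2$ and simultaneously retune $\f$ without breaking either the diagonal perfect-square bound (which governs the high modes) or the odd-part bound. Carrying the $\gamma$-dependence of the cutoff $M=M(\gamma)$ in the definition of $\f$ through the whole argument is the book-keeping that must be done with care.
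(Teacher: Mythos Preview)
Your Steps 1 and 2 are correct and match the paper's argument. The reduction to $b=0$ and the odd/even decoupling are exactly what the paper does, and the odd piece is indeed handled verbatim by Lemma~\ref{lem:R(u)>|D^2u|^2:antisymmetric} (applied on the $2L$-torus). You also correctly locate the entire difficulty in the even piece.

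Where your proposal goes astray is Step~3. Your plan is to rerun the Fourier computation for cosines, absorb the difference $\psi_{k+n}-\psi_{|k-n|}$ as before, and then argue that the residual Toeplitz form $\sum_{k,n}\psi_{|k-n|}a_na_k$ together with leftover diagonal mass ``assembles into'' the rank-one form $\tfrac{1}{4L}|\la v_e,\f'\ra_{H_{2L}}|^2=\tfrac{1}{4L}\sum_{k,n}\psi_k\psi_n a_ka_n$. These two quadratic forms have completely different structure (full-rank Toeplitz versus rank one), and there is no reason for one to dominate or equal the other after a low-mode retuning of $\f$; the statement that this ``should'' happen is not a proof and I do not see how to make it one. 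Your suggestion to \emph{modify} $\f$ is also off: in the paper the function $\f$ is exactly the one produced by Lemma~\ref{lem:R(u)>|D^2u|^2:antisymmetric} on the $2L$-torus, unchanged.

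The paper's resolution of Step~3 is entirely different and much simpler. Instead of Fourier-analysing the even part, one subtracts the value at the origin: write $v_e=u(0)+u_S$ with $u_S$ even and $u_S(0)=0$ (here $u(0)=v_e(0)$). Because $u$ is $L$-periodic one also has $u_S(\pm L)=0$ and $u_S'(0)=u_S'(\pm L)=0$, so the sign-flip $\mathcal{T}u_S(x):=\operatorname{sgn}(x)\,u_S(x)$ is an \emph{odd}, $2L$-periodic element of $H^2_{2L}$ with $|D^m(\mathcal{T}u_S)|=|D^m u_S|$ a.e.\ and $(\mathcal{T}u_S)^2=u_S^2$. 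Lemma~\ref{lem:R(u)>|D^2u|^2:antisymmetric} therefore applies to $\mathcal{T}u_S$ and transfers verbatim to $u_S$, giving the clean bound with no correction. The correction term $-\tfrac{1}{4L}|\la u,\f'\ra_{H_{2L}}|^2$ then arises not from any off-diagonal Fourier interaction but from the scalar $u(0)$: the expansion of $\la u^2,\f'\ra_{H_{2L}}$ produces the cross term $u(0)\la u,\f'\ra_{H_{2L}}$, while $\tfrac12\|u_A+u_S\|^2_{H_{2L}}=\tfrac12\|u\|^2_{H_{2L}}+L\,u(0)^2$ supplies an extra $L\,u(0)^2$, and completing the square $L\,u(0)^2+u(0)\la u,\f'\ra_{H_{2L}}\ge -\tfrac{1}{4L}|\la u,\f'\ra_{H_{2L}}|^2$ finishes the proof. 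The missing idea in your attempt is precisely this $\mathcal{T}$-trick, which reduces the even case back to the odd lemma without touching $\f$ at all.
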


\begin{remark} \label{remark:R(u)} We remark that estimate \eqref{ineq:R(u)>|D^2u|^2} is actually not new as it was mentioned in the argument of \cite[Proposition 5.1]{ferrario2008invariant}, albeit without a proof. In our work, we opt for presenting the rigorous proof of Lemma \ref{lem:R(u)} not only for the sake of completeness, but also for its crucial role in Section \ref{sec:moment-estimate}, where we collect useful moment bounds. In turn, they are employed in Section \ref{sec:mixing} to establish the main result on the exponential mixing of the KSE.
    
\end{remark}

\begin{proof}[Proof of Lemma \ref{lem:R(u)}] Let $\f\in \A_{2L}$ be the function from Lemma \ref{lem:R(u)>|D^2u|^2:antisymmetric}. Observe that estimate \eqref{ineq:R(u)>|D^2u|^2} is equivalent to
\begin{align*}
       &\gamma\|D^2 u (\cdot-b) \|^2_{H_{2L}}-\|Du(\cdot-b) \|^2_{H_{2L}}+\frac{1}{2}\la u(\cdot-b) ^2,\f'\ra_{H_{2L}}  \notag  \\
       &\ge \frac{1}{4}\gamma\|D^2u(\cdot-b) \|^2_{H_{2L}} +\frac{1}{2}\|u(\cdot-b) \|^2_{H_{2L}}-\frac{1}{4L}\big|\la u(\cdot-b) ,\f'\ra_{H_{2L}}  \big|^2.
    \end{align*}
Since $u(\cdot-b)$ is still an element of $H^2_L$, it therefore suffices to establish \eqref{ineq:R(u)>|D^2u|^2} for $b=0$ and for all $u\in H^2_L$. 

Now, setting 
\begin{align*}
u_A(x) = \frac{u(x)-u(-x)}{2},\quad u_S=\frac{u(x)+u(-x)}{2}-u(0),
\end{align*}
we note that $u_A$ is an odd function whereas $u_S$ is an even function with $u_S(0)=0$. Moreover, we may recast $u$ as
\begin{align*}
u=u(0)+u_A+u_S.
\end{align*}
By a routine calculation, it holds that
\begin{align*}
\|D^2u\|^2_{H_{2L}}= \|D^2u_A+D^2u_S\|^2_{H_{2L}} = \|D^2u_A\|^2_{H_{2L}}+ \|D^2u_S\|^2_{H_{2L}}, 
\end{align*}
where in the last implication, we employed the fact that 
$\la D^2 u_A,D^2 u_S\ra_{H_{2L}}=0$ since this is the inner product between an odd function and an even function. Likewise, 
\begin{align*}
\|Du\|^2_{H_{2L}}= \|Du_A\|^2_{H_{2L}}+ \|Du_S\|^2_{H_{2L}}.
\end{align*}
Concerning $\la u^2,\f'\ra_{H_{2L}}$, we invoke the fact that
\begin{align*}
\la u(0)^2,\f'\ra_{H_{2L}}=\la u_Au_S,\f'\ra_{H_{2L}}=0,
\end{align*}
to obtain the identity
\begin{align*}
\la u^2,\f'\ra_{H_{2L}} & = \la u_A^2,\f'\ra_{H_{2L}}+\la u_S^2,\f'\ra_{H_{2L}}+\la u(0)^2,\f'\ra_{H_{2L}}\\
&\qquad +2\la u_Au_S,\f'\ra_{H_{2L}}+2u(0)\la u_A,\f'\ra_{H_{2L}}+2u(0)\la u_S,\f'\ra_{H_{2L}}\\
&= \la u_A^2,\f'\ra_{H_{2L}}+\la u_S^2,\f'\ra_{H_{2L}}+2u(0)\la u,\f'\ra_{H_{2L}}.
\end{align*}
It follows that
\begin{align} \label{eqn:R(u)=u_A+u_S}
\gamma\|D^2 u\|^2_{H_{2L}}-\|Du\|^2_{H_{2L}}+\frac{1}{2}\la u^2,\f'\ra_{H_{2L}}  
&=\gamma\|D^2 u_A\|^2_{H_{2L}}-\|Du_A\|^2_{H_{2L}}+\frac{1}{2}\la u_A^2,\f'\ra_{H_{2L}}  \notag \\
&\qquad + \gamma\|D^2 u_S\|^2_{H_{2L}}-\|Du_S\|^2_{H_{2L}}+\frac{1}{2}\la u_S^2,\f'\ra_{H_{2L}} \notag \\
&\qquad+u(0)\la u,\f'\ra_{H_{2L}}.
\end{align}
From \eqref{ineq:R(u)>|D^2u|^2:antisymmetric}, we readily have
\begin{align*}
\gamma\|D^2 u_A\|^2_{H_{2L}}-\|Du_A\|^2_{H_{2L}}+\frac{1}{2}\la u_A^2,\f'\ra_{H_{2L}}\ge \frac{1}{4}\gamma\|D^2u_A\|^2_{H_{2L}}+\frac{1}{2}\|u_A\|^2_{H_{2L}}.
\end{align*}
Turning to the terms involving $u_S$ on the right-hand side of \eqref{eqn:R(u)=u_A+u_S}, let $\Tcal$ be the operator defined as
\begin{align*}
\Tcal u (x) = u(x)\mathbf{1}\{x\ge 0\} - u(x) \mathbf{1}\{x< 0\}.
\end{align*}
Since $u_S$ is even with $u_S(0)=0$, observe that $\Tcal u_S$ is an odd and $2L$-periodic function with vanishing integral on $[-L,L]$. Also, a.e.
\begin{align*}
|D^m (\Tcal u_S)| = |D^m u_S|,\quad m=0,1,2.
\end{align*}
We invoke Lemma \ref{lem:R(u)>|D^2u|^2:antisymmetric} again to infer
\begin{align*}
  \gamma\|D^2 (\Tcal u_S) \|^2_{H_{2L}}-\|D(\Tcal u_S)\|^2_{H_{2L}}+\frac{1}{2}\la (\Tcal u_S)^2,\f'\ra_{H_{2L}}\ge \frac{1}{4}\gamma\|D^2(\Tcal u_S)\|^2_{H_{2L}}+\frac{1}{2}\|\Tcal u_S\|^2_{H_{2L}},
\end{align*}
whence
\begin{align*}
\gamma\|D^2 u_S\|^2_{H_{2L}}-\|Du_S\|^2_{H_{2L}}+\frac{1}{2}\la u_S^2,\f'\ra_{H_{2L}}\ge \frac{1}{4}\gamma\|D^2u_S\|^2_{H_{2L}}+\frac{1}{2}\|u_S\|^2_{H_{2L}}.
\end{align*}
So, from \eqref{eqn:R(u)=u_A+u_S}, we get
\begin{align*}
&\gamma\|D^2 u\|^2_{H_{2L}}-\|Du\|^2_{H_{2L}}+\frac{1}{2}\la u^2,\f'\ra_{H_{2L}}  \\
& \ge \frac{1}{4}\gamma\|D^2u_A\|^2_{H_{2L}}+\frac{1}{2}\|u_A\|^2_{H_{2L}}+\frac{1}{4}\gamma\|D^2u_S\|^2_{H_{2L}}+\frac{1}{2}\|u_S\|^2_{H_{2L}}+u(0)\la u^2,\f'\ra_{H_{2L}}\\
&= \frac{1}{4}\gamma\|D^2u\|^2_{H_{2L}}+\frac{1}{2}\|u_A+u_S\|^2_{H_{2L}}+u(0)\la u^2,\f'\ra_{H_{2L}}.
\end{align*}
Note that since $u$ has vanishing integral, we have
\begin{align*}
\|u_A+u_S\|^2_{H_{2L}} = \|u-u(0)\|^2_{H_{2L}}&=\|u\|^2_{H_{2L}}-2\la u(0),u\ra_{H_{2L}}+2Lu(0)^2\\
&=\|u\|^2_{H_{2L}}+2Lu(0)^2,
\end{align*}
whence
\begin{align*}
&\gamma\|D^2 u\|^2_{H_{2L}}-\|Du\|^2_{H_{2L}}+\frac{1}{2}\la u^2,\f'\ra_{H_{2L}}  \\
&\ge \frac{1}{4}\gamma\|D^2u\|^2_{H_{2L}}+\frac{1}{2}\|u\|^2_{H_{2L}}+Lu(0)^2+u(0)\la u^2,\f'\ra_{H_{2L}}\\
&\ge \frac{1}{4}\gamma\|D^2u\|^2_{H_{2L}}+\frac{1}{2}\|u\|^2_{H_{2L}}-\frac{1}{4L}\big|\la u^2,\f'\ra_{H_{2L}}\big|^2.
\end{align*}
This produces \eqref{ineq:R(u)>|D^2u|^2} for $b=0$ and for all $u\in H^2_L$. The proof is thus finished.
\end{proof}

\section{Auxiliary estimates} \label{sec:aux}

In this section, we collect a variety of estimates that are used to prove the main mixing theorem. We start with Lemma \ref{lem:|u|_infty<L|Du|} relating $L^\infty$ norm with $H^1$ norm. In particular, the result of Lemma \ref{lem:|u|_infty<L|Du|} appeared in the proof of Lemma \ref{lem:coupling}.

\begin{lemma} \label{lem:|u|_infty<L|Du|}
    Suppose that $u\in H^1$. Then,
    \begin{align} \label{ineq:|u|_infty<L|Du|}
        \|u\|_{L^\infty} \le \sqrt{L}\|Du\|_H.
    \end{align}
\end{lemma}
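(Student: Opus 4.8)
The plan is to prove this by the fundamental theorem of calculus combined with the Cauchy--Schwarz inequality, using in an essential way the vanishing-integral constraint built into $\Pcal_L$ (and hence into $H^1 = H^1_L$).

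First I would record two elementary facts. In one spatial dimension $H^1_L$ embeds continuously into the space of continuous $L$-periodic functions, so $u$ has a continuous representative; and, by Parseval's identity, the norm $\|\cdot\|_H$ agrees with the physical-space norm $\|\cdot\|_{L^2(-L/2,L/2)}$, so that $\|Du\|_H^2 = \int_{-L/2}^{L/2}|Du(x)|^2\,\d x$. Since $u$ is continuous with $\int_{-L/2}^{L/2} u(x)\,\d x = 0$, it cannot be strictly signed, so by the intermediate value theorem there is a point $x_0\in[-L/2,L/2]$ with $u(x_0)=0$.

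Next, for an arbitrary $x\in[-L/2,L/2]$ I would write, via the fundamental theorem of calculus and Cauchy--Schwarz,
\begin{align*}
|u(x)| = \Big| \int_{x_0}^{x} Du(s)\,\d s \Big| \le |x-x_0|^{1/2}\Big(\int_{x_0}^{x}|Du(s)|^2\,\d s\Big)^{1/2} \le \sqrt{L}\,\|Du\|_H,
\end{align*}
where in the last step I used $|x-x_0|\le L$ together with $\int_{x_0}^{x}|Du|^2 \le \int_{-L/2}^{L/2}|Du|^2 = \|Du\|_H^2$. Taking the supremum over $x$ yields \eqref{ineq:|u|_infty<L|Du|}. (Alternatively, one may avoid locating a zero of $u$: subtracting the mean and averaging the identity $u(x)-u(y)=\int_{y}^{x} Du$ over $y\in[-L/2,L/2]$ gives $|u(x)|\le \|Du\|_{L^1(-L/2,L/2)}\le \sqrt{L}\,\|Du\|_H$.)

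There is no genuine obstacle here; the statement is a standard Poincar\'e/Morrey-type inequality on the circle, and for this trivial lemma the displayed chain is essentially the whole argument. The only point deserving attention is the normalization: one should make sure the Fourier-coefficient definitions of $\|u\|_H$ and $\|Du\|_H$ from Section~\ref{sec:results:functional-setting} are correctly identified with the corresponding $L^2(-L/2,L/2)$ quantities before invoking the one-dimensional estimate, after which everything is immediate. (The constant $\sqrt{L}$ is not optimal --- integrating along the shorter of the two arcs joining $x_0$ to $x$ improves it to $\sqrt{L/2}$ --- but $\sqrt{L}$ is all that is used in the proof of Lemma~\ref{lem:coupling}.)
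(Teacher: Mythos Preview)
Your proof is correct and follows essentially the same approach as the paper: use the vanishing mean to locate a zero $x_0$ of the continuous representative, then apply the fundamental theorem of calculus and Cauchy--Schwarz to bound $|u(x)|$ by $\sqrt{L}\,\|Du\|_H$. Your version is in fact slightly more careful in justifying the embedding and the identification of $\|Du\|_H$ with the physical-space $L^2$ norm, and the alternative averaging argument and remark on the constant are nice but inessential additions.
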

\begin{proof}
    Since $u$ is continuous and satisfies zero mean integral on $[-L/2,L/2]$, there must exist $x_0\in [-L/2,L/2]$ such that $u(x_0)=0$. We invoke Holder inequality to deduce for all $x\in[-L/2,L/2] $
    \begin{align*}
        |u(x)| = |u(x)-u(x_0)|= \Big|\int_{x_0}^x Du(s)\d s\Big|\le \sqrt{L}\sqrt{\int_{-L/2}^{L/2}|Du(s)|^2\d s}.
    \end{align*}
    This immediately produces \eqref{ineq:|u|_infty<L|Du|}, as claimed.
\end{proof}

Next, we assert an elementary inequality through Lemma \ref{lem:e^u<e^(-t+u)} which was employed in the proof of Lemma \ref{lem:Lyapunov} establishing Lypaunov functions.

\begin{lemma} \label{lem:e^u<e^(-t+u)}
    Suppose that $f:[0,\infty)\times \Omega\to[0,\infty)$ satisfies $f(0)\ge 0$ is deterministic and that
    \begin{align} \label{cond:e^u<e^(-t+u)}
       \E e^{f(t)} \le c_1  e^{c_2e^{-a t}f(0)},\quad t\ge 0,
    \end{align}
    for some positive constants $c_1$, $c_2$ and $a$ independent of $t$. Then, the following holds
    \begin{align} \label{ineq:e^u<e^(-t+u)}
        \E\,e^{f(t)} \le 2c_1\,e^{-\frac{1}{2} a t}\,e^{f(0)}+2c_1,\quad t\ge \frac{2}{a}\log (c_2+1).
    \end{align}
   
\end{lemma}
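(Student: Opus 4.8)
The plan is to show that, past the stated threshold on $t$, the right-hand side of \eqref{cond:e^u<e^(-t+u)} is itself bounded by $2c_1 e^{-\frac{1}{2}at}e^{f(0)}+2c_1$; no probabilistic input beyond the hypothesis is needed, since $f(0)$ is deterministic and nonnegative and $\E e^{f(t)}$ is finite by assumption. The one idea to extract is that the threshold $t\ge\frac{2}{a}\log c_2$ is precisely calibrated to absorb the constant $c_2$ into half of the exponential decay.

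First I would split the decay: for $t\ge\frac{2}{a}\log c_2$ we have $\tfrac{a}{2}t\ge\log c_2$, hence $c_2\le e^{at/2}$, and therefore $c_2 e^{-at}\le e^{-at/2}$. Since $f(0)\ge0$ this gives $c_2 e^{-at}f(0)\le e^{-at/2}f(0)$, so monotonicity of $\exp$ and \eqref{cond:e^u<e^(-t+u)} yield $\E e^{f(t)}\le c_1 e^{e^{-at/2}f(0)}$.

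Next I would linearize the remaining exponential by convexity. Setting $\lambda:=e^{-at/2}\in(0,1]$ (note $\lambda\le1$ because $a>0$ and $t\ge0$) and writing $\lambda f(0)=\lambda\cdot f(0)+(1-\lambda)\cdot 0$, convexity of $\exp$ gives $e^{\lambda f(0)}\le\lambda e^{f(0)}+(1-\lambda)\le e^{-at/2}e^{f(0)}+1$. Combining the two steps produces $\E e^{f(t)}\le c_1 e^{-at/2}e^{f(0)}+c_1$, which is in fact marginally stronger than \eqref{ineq:e^u<e^(-t+u)} and certainly implies it, since $e^{-at/2}=e^{-\frac12 at}$ and $c_1\le 2c_1$.

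There is essentially no real obstacle: the whole argument is the observation "$e^{-at}=e^{-at/2}\cdot e^{-at/2}$, and one factor cancels $c_2$ once $t\ge\frac{2}{a}\log c_2$," followed by the elementary bound $e^{\lambda x}\le\lambda e^{x}+(1-\lambda)$ for $\lambda\in[0,1]$, $x\ge0$. The only things worth a sentence of care are that $\lambda\le1$, that the hypothesis guarantees finiteness of $\E e^{f(t)}$, and that $f(0)$ being deterministic is what permits treating it as a fixed nonnegative scalar throughout.
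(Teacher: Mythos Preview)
Your argument is correct and in fact yields the sharper bound $\E e^{f(t)}\le c_1 e^{-\frac{1}{2}at}e^{f(0)}+c_1$, which immediately implies \eqref{ineq:e^u<e^(-t+u)}. The key step---interpolating $e^{\lambda f(0)}\le \lambda e^{f(0)}+(1-\lambda)$ via convexity of $\exp$ for $\lambda=e^{-at/2}\in(0,1]$---is clean and entirely elementary.

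The paper takes a different route after the common first reduction to $\E e^{f(t)}\le c_1 e^{e^{-at/2}f(0)}$: it rewrites the target as $e^{\frac{1}{2}at}\E e^{f(t)}\le 2c_1 e^{f(0)}+2c_1 e^{\frac{1}{2}at}$ and proceeds by a three-way case split on whether $\tfrac{1}{2}at\le\log 2$, $\tfrac{1}{2}at\ge\log f(0)$, or $\log 2<\tfrac{1}{2}at<\log f(0)$, the last case using the inequality $xe^{x/2}\le 2e^x$. Your convexity argument is shorter, avoids the case analysis altogether, and gives a better constant; the paper's approach is more hands-on but buys nothing extra here.
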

\begin{proof} We note that for all $t\ge \frac{2}{a}\log (c_2+1)$, it holds that
\begin{align*}
    c_2< e^{\frac{1}{2}at}.
\end{align*}
So, setting $p=e^{at}/c_2>1$ and $q=1/(1-c_2 e^{-at})>1$, we apply Young's inequality to the right hand side of \eqref{cond:e^u<e^(-t+u)} to infer
\begin{align*}
    e^{c_2e^{-a t}f(0)} \le c_2 e^{-at}e^{f(0)}+ 1-c_2 e^{-at} &=  c_2 e^{-\frac{1}{2}at}\cdot e^{-\frac{1}{2}at}e^{f(0)}+ 1-c_2 e^{-at} \\
    &\le  e^{-\frac{1}{2}at}e^{f(0)}+ 1.
\end{align*}
This immediately implies \eqref{ineq:e^u<e^(-t+u)}, as claimed.
\end{proof}

We conclude this section with two auxiliary estimates on Wasserstein distances in Lemma \ref{lem:W_(d_K,beta):triangle} and \ref{lem:W_(d_K,beta)<W_TV}. We have employed their results to establish the contracting properties in Lemma \ref{lem:contracting} and \ref{lem:d-small}.

\begin{lemma} \label{lem:W_(d_K,beta):triangle}
    Given $K,\beta>0$, let $d_{K,\beta}$ be the distance defined in \eqref{form:d_K}. Then, the following holds
    \begin{align} \label{ineq:W_(d_K,beta):triangle}
        \W_{d_{K,\beta}}(\nu_1,\nu_2)\le e^{2\beta/K^2} \big[\W_{d_{K,2\beta}}(\nu_1,\nu_3)+ \W_{d_{K,\beta}}(\nu_3,\nu_2)\big],\quad \nu_1,\nu_2,\nu_3\in \Pcal r(H).
    \end{align}
\end{lemma}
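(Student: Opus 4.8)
The plan is to reduce \eqref{ineq:W_(d_K,beta):triangle} to a pointwise inequality among the three distance-like functions and then lift it to Wasserstein distances by a standard gluing of couplings. First I would record the elementary reformulation: since $\theta_\beta(v,u)=\|u-v\|_H e^{\beta\|v\|^2_H}$ and $t\mapsto e^{\beta t}$ is increasing, the inner minima in \eqref{form:d_K} always anchor the exponential weight at the endpoint of smaller $H$-norm, so that
\[
d_{K,\beta}(u,v)=\min\bigl\{\,K\|u-v\|_H\, e^{\beta(\|u\|^2_H\wedge\|v\|^2_H)},\ 1\,\bigr\}.
\]
Granting this, the lemma follows from the pointwise claim
\[
d_{K,\beta}(x_1,x_2)\ \le\ e^{2\beta/K^2}\bigl[\,d_{K,2\beta}(x_1,x_3)+d_{K,\beta}(x_3,x_2)\,\bigr]\qquad\text{for all }x_1,x_2,x_3\in H.
\]
Indeed, given $\varepsilon>0$, choose couplings $(X_1,X_3)$ of $(\nu_1,\nu_3)$ and $(X_3,X_2)$ of $(\nu_3,\nu_2)$ that are $\varepsilon$-optimal for $\W_{d_{K,2\beta}}$ and $\W_{d_{K,\beta}}$ respectively, glue them along the common $\nu_3$-marginal into a single triple $(X_1,X_3,X_2)$ carrying the prescribed pairwise laws, take expectations in the pointwise bound, and let $\varepsilon\downarrow 0$.

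For the pointwise claim I would split on the size of its right-hand side. If $d_{K,2\beta}(x_1,x_3)\ge1$ or $d_{K,\beta}(x_3,x_2)\ge1$, the right-hand side is $\ge e^{2\beta/K^2}\ge1\ge d_{K,\beta}(x_1,x_2)$ and there is nothing to prove; so assume both are $<1$, whence each equals its ``$K\|\cdot\|e^{\beta(\cdot)}$'' form, in particular $K\|x_1-x_3\|_H e^{2\beta(\|x_1\|^2_H\wedge\|x_3\|^2_H)}<1$ and $K\|x_3-x_2\|_H e^{\beta(\|x_3\|^2_H\wedge\|x_2\|^2_H)}<1$. By the reverse triangle inequality $\bigl|\,\|x_i\|_H-\|x_3\|_H\,\bigr|\le\|x_i-x_3\|_H$ these force $\|x_i\|_H<\|x_3\|_H+\tfrac1K$ whenever $\|x_i\|_H>\|x_3\|_H$, and then the completion of square $(\|x_3\|_H-\tfrac1K)^2\ge0$ yields the key norm estimate
\[
\|x_1\|^2_H\wedge\|x_2\|^2_H\ \le\ \tfrac{2}{K^2}+2\bigl(\|x_1\|^2_H\wedge\|x_3\|^2_H\bigr).
\]
Combining this with $\|x_1-x_2\|_H\le\|x_1-x_3\|_H+\|x_3-x_2\|_H$, the leg from $x_1$ to $x_3$, i.e.\ $K\|x_1-x_3\|_H e^{\beta(\|x_1\|^2_H\wedge\|x_2\|^2_H)}$, is bounded directly by $e^{2\beta/K^2}d_{K,2\beta}(x_1,x_3)$.

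The crux, and the step I expect to be the main obstacle, is the leg from $x_3$ to $x_2$. The weight of $d_{K,\beta}(x_3,x_2)$ is anchored at $\|x_3\|^2_H\wedge\|x_2\|^2_H$, whereas the term $K\|x_3-x_2\|_H e^{\beta(\|x_1\|^2_H\wedge\|x_2\|^2_H)}$ produced by the triangle inequality carries the weight of $d_{K,\beta}(x_1,x_2)$; when both $\|x_1\|_H$ and $\|x_2\|_H$ exceed $\|x_3\|_H$, the gap $\|x_1\|^2_H\wedge\|x_2\|^2_H-(\|x_3\|^2_H\wedge\|x_2\|^2_H)$ may surpass $2/K^2$ and so cannot be absorbed into $e^{2\beta/K^2}$ leg-by-leg. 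The resolution is to treat the two legs together: because the first slot carries the \emph{doubled} exponent $2\beta$, the $x_1$-to-$x_3$ bound above is genuinely slack, one in fact has $\|x_1\|^2_H\wedge\|x_2\|^2_H-2(\|x_1\|^2_H\wedge\|x_3\|^2_H)<2/K^2$, and this surplus compensates the deficit on the $x_3$-to-$x_2$ leg once one uses that $d_{K,\beta}(x_3,x_2)<1$ makes $\|x_3-x_2\|_H$ exponentially small in $\|x_3\|^2_H$ while $\|x_1-x_3\|_H\ge\bigl|\,\|x_1\|_H-\|x_3\|_H\,\bigr|$ is bounded below by the very quantity driving the gap. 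What remains is an elementary (if fussy) inequality of the schematic shape $p\,(X-Y)\le q\,(Z-X)$ among the three relevant exponentials and the lengths $p=\|x_1-x_3\|_H$, $q=\|x_3-x_2\|_H$, which is closed by another completion of square; this finishes the pointwise claim and hence the lemma. I would also record the monotonicity $d_{K,\beta}\le d_{K,2\beta}$, and hence $\W_{d_{K,\beta}}\le\W_{d_{K,2\beta}}$, which is what allows the lemma to be invoked later with $d_{K,2\beta}$ on both legs.
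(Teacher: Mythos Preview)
Your overall strategy---reduce to a pointwise inequality and lift via the gluing lemma---is exactly right, and your treatment of Case~1 and of the first leg in Case~2 matches the paper's. The gap is in the second leg. Note first that the lemma as printed carries a typo: the paper's proof actually establishes
\[
d_{K,\beta}(u,v)\le e^{2\beta/K^2}\bigl[d_{K,2\beta}(u,z)+d_{K,2\beta}(z,v)\bigr],
\]
with $2\beta$ on \emph{both} right-hand terms, and this is also how the lemma is invoked later (see \eqref{ineq:W_(d_K,beta)(P_t(u_0),P_t(v_0))}). With that correction the second leg is handled symmetrically to the first: from $K\|z-v\|_H<1$ one has $e^{\beta\|v\|^2_H}\le e^{2\beta/K^2}e^{2\beta\|z\|^2_H}$ and trivially $e^{\beta\|v\|^2_H}\le e^{2\beta\|v\|^2_H}$, whence $e^{\beta\|u\|^2_H}\wedge e^{\beta\|v\|^2_H}\le e^{2\beta/K^2}\bigl(e^{2\beta\|z\|^2_H}\wedge e^{2\beta\|v\|^2_H}\bigr)$, and the leg-by-leg bound closes without any ``trading'' of slack.

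Your attempt to prove the stronger statement as literally written (with only $d_{K,\beta}$ on the second leg) does not close. You correctly diagnose that when $\|x_3\|_H<\|x_1\|_H\le\|x_2\|_H$ the weight gap $\beta\|x_1\|^2_H-\beta\|x_3\|^2_H$ can exceed $2\beta/K^2$, but the proposed resolution---replace $\|x_1-x_2\|_H$ by $p+q$ and then trade the surplus on the first leg against the deficit on the second---is only a heuristic. The sufficient inequality it produces,
\[
q\bigl(e^{\beta(a-b)}-e^{2\beta/K^2}\bigr)\le p\bigl(e^{2\beta/K^2+\beta b}-e^{\beta(a-b)}\bigr)
\]
with $a=\|x_1\|^2_H$ and $b=\|x_3\|^2_H$, can fail for admissible $(p,q,a,b)$ (e.g.\ $K=10$, $\beta=1$, $b=0.01$, $p=\sqrt a-\sqrt b=0.09$, $q$ near $e^{-\beta b}/K$). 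What actually rescues the original inequality in that regime is that the triangle bound $\|x_1-x_2\|_H\le p+q$ is \emph{itself} slack whenever both $\|x_1\|_H,\|x_2\|_H>\|x_3\|_H$ with a substantial norm gap, and you have discarded this slack at the very first step; the promised ``completion of square'' along the route you sketch does not exist. The clean fix is simply to prove (and use) the $2\beta$-on-both-legs version, as the paper does.
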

\begin{proof}
First of all, we claim that for all $u,v,z\in H$, 
\begin{align} \label{ineq:d_K,beta:triangle}
    d_{K,\beta}(u,v) \le e^{2\beta/K^2}\big[ d_{K,2\beta}(u,z)+ d_{K,2\beta}(z,v)\big].
\end{align}
Indeed, recalling expression \eqref{form:d_K} of $ d_{K,\beta}$, there are two cases to be considered depending on the values of $\|u-z\|_H$ and $\|z-v\|_H$. 

Case 1: $\max\{K\|u-z\|_H,K\|z-v\|_H\}\ge 1$. Without loss of generality, assume that $K\|u-z\|_H\ge 1$. Then, 
\begin{align*}
    K\theta_{2\beta}(u,z)\wedge K\theta_{2\beta}(z,u)= K\|u-z\|_H\big(e^{2\beta\|u\|^2_H} \wedge  e^{2\beta\|z\|^2_H}\big)\ge 1,
\end{align*}
whence,
\begin{align*}
    d_{K,2\beta}(u,z) = K\theta_{2\beta}(u,z)\wedge K\theta_{2\beta}(z,u)\wedge 1 =1.
\end{align*}
This immediately implies \eqref{ineq:d_K,beta:triangle} since $d_{K,\beta}(u,v)\le 1$.

Case 2: $\max\{K\|u-z\|_H,K\|z-v\|_H\}< 1$. In this case, observe that
\begin{align*}
    e^{\beta\|u\|^2_H} \le e^{2\beta\|u-z\|^2_H}e^{2\beta\|z\|^2_H}\le e^{2\beta/K^2}e^{\beta\|z\|^2_H}.
\end{align*}
Likewise, 
\begin{align*}
     e^{\beta\|v\|^2_H} \le e^{2\beta/K^2}e^{\beta\|z\|^2_H}.
\end{align*}
As a consequence, it holds that
\begin{align*}
    e^{\beta\|u\|^2_H}\wedge e^{\beta\|v\|^2_H}  \le e^{\beta\|u\|^2_H} \le e^{2\beta/K^2}\big(e^{2\beta\|u\|^2_H}\wedge e^{2\beta\|z\|^2_H} \big),
\end{align*}
and that
\begin{align*}
    e^{\beta\|u\|^2_H}\wedge e^{\beta\|v\|^2_H}  \le e^{\beta\|v\|^2_H} \le e^{2\beta/K^2}\big(e^{2\beta\|v\|^2_H}\wedge e^{2\beta\|z\|^2_H} \big).
\end{align*}
Now, using triangle inequality, we have
\begin{align*}
  & K\theta_{\beta}(u,v)\wedge K\theta_{\beta}(v,u) \\
  & = K\|u-v\|_H \big( e^{\beta\|u\|^2_H}\wedge e^{\beta\|v\|^2_H} \big)\\
   &\le K\|u-z\|_H \big( e^{\beta\|u\|^2_H}\wedge e^{\beta\|v\|^2_H} \big)+K\|z-v\|_H \big( e^{\beta\|u\|^2_H}\wedge e^{\beta\|v\|^2_H} \big)\\
   &\le e^{2\beta/K^2} K\|u-z\|_H \big( e^{2\beta\|u\|^2_H}\wedge e^{2\beta\|z\|^2_H} \big)+ e^{2\beta/K^2}K\|z-v\|_H \big( e^{2\beta\|z\|^2_H}\wedge e^{\beta\|v\|^2_H} \big)\\
   &= e^{2\beta/K^2}\big[ K\theta_{2\beta} (u,z)\wedge K\theta_{2\beta}(z,u)  +  K\theta_{2\beta} (v,z)\wedge K\theta_{2\beta}(z,v) \big].
\end{align*}
This implies that
\begin{align*}
    d_{K,\beta}(u,v)& = K\theta_{\beta}(u,v)\wedge K\theta_{\beta}(v,u) \wedge 1\\
    &\le e^{2\beta/K^2}\big[ K\theta_{2\beta} (u,z)\wedge K\theta_{2\beta}(z,u) \wedge 1 +  K\theta_{2\beta} (v,z)\wedge K\theta_{2\beta}(z,v) \wedge 1\big]\\
    &= e^{2\beta/K^2}\big[ d_{K,2\beta}(u,z)+ d_{K,2\beta}(z,v)\big],
\end{align*}
which proves \eqref{ineq:d_K,beta:triangle}, as claimed.

Turning back to \eqref{ineq:W_(d_K,beta):triangle}, we invoke \eqref{ineq:d_K,beta:triangle} to infer for all random variables $X_i\sim\nu_i$, $i=1,2,3$,
\begin{align*}
    \E\, d_{K,\beta}(X_1,X_2) \le  e^{2\beta/K^2}\big[ \E\,d_{K,2\beta}(X_1,X_3)+\E\, d_{K,2\beta}(X_3,X_2)\big].
\end{align*}
In view of the definition \eqref{form:W} of $\W_{d_{k,\beta}}$, we deduce 
\begin{align*}
    \W_{d_{K,\beta}}(\nu_1,\nu_2)\le e^{2\beta/K^2} \big[\W_{d_{K,2\beta}}(\nu_1,\nu_3)+ \W_{d_{K,\beta}}(\nu_3,\nu_2)\big],
\end{align*}
thereby finishing the proof. 
\end{proof}

\begin{lemma} \label{lem:W_(d_K,beta)<W_TV}
    Given $K,\beta>0$, let $d_{K,\beta}$ be the distance defined in \eqref{form:d_K}. Then, the following holds
    \begin{align} \label{ineq:W_(d_K,beta)<W_TV}
        \W_{d_{K,\beta}}(\nu_1,\nu_2)\le \W_{\TV}(\nu_1,\nu_2),\quad \nu_1,\nu_2\in \Pcal r(H).
    \end{align}
\end{lemma}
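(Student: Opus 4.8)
The plan is to reduce the statement to the trivial pointwise bound $d_{K,\beta}(u,v)\le \mathbf 1\{u\neq v\}$ on $H\times H$, and then to pass to Wasserstein distances through the coupling representation \eqref{form:W}. No nontrivial estimate is needed.

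First I would record the pointwise inequality. By the very definition \eqref{form:d_K}, $d_{K,\beta}(u,v)=K\theta_\beta(u,v)\mi K\theta_\beta(v,u)\mi 1\le 1$ for all $u,v\in H$; and if $u=v$ then $\theta_\beta(u,v)=\|u-v\|_He^{\beta\|u\|^2_H}=0$ by \eqref{form:theta(u,v)}, so $d_{K,\beta}(u,u)=0$. Combining the two observations gives $d_{K,\beta}(u,v)\le \mathbf 1\{u\neq v\}$ for every $u,v\in H$.

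Next I would invoke the fact, already recalled in Section \ref{sec:results:Exponential mixing}, that $\W_{\TV}$ is exactly the Wasserstein distance associated with the discrete metric $d(u,v)=\mathbf 1\{u\neq v\}$, i.e.\ $\W_{\TV}(\nu_1,\nu_2)=\inf\E\,\mathbf 1\{X_1\neq X_2\}$, the infimum taken over all couplings $(X_1,X_2)$ with $X_1\sim\nu_1$ and $X_2\sim\nu_2$. For any such coupling, the pointwise bound yields $\E\,d_{K,\beta}(X_1,X_2)\le\E\,\mathbf 1\{X_1\neq X_2\}$. Since this holds coupling by coupling, taking the infimum over couplings on both sides and using \eqref{form:W} for the left-hand side gives
\begin{align*}
\W_{d_{K,\beta}}(\nu_1,\nu_2)=\inf\E\,d_{K,\beta}(X_1,X_2)\le \inf\E\,\mathbf 1\{X_1\neq X_2\}=\W_{\TV}(\nu_1,\nu_2),
\end{align*}
which is \eqref{ineq:W_(d_K,beta)<W_TV}. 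The only point requiring (minimal) care is the monotonicity of the infimum, namely that $f\le g$ pointwise on the set of couplings forces $\inf f\le\inf g$; this is immediate. Hence there is no real obstacle, and the argument is essentially one line once the pointwise comparison $d_{K,\beta}\le\mathbf 1\{\cdot\neq\cdot\}$ is noted.
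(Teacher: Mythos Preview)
Your proposal is correct and follows essentially the same approach as the paper: establish the pointwise bound $d_{K,\beta}(u,v)\le\mathbf 1\{u\neq v\}$ (using that $d_{K,\beta}\le 1$ and $d_{K,\beta}(u,u)=0$), then pass to the infimum over couplings via \eqref{form:W}. The paper's argument is more terse but identical in substance.
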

\begin{proof}
    We note that for all $u,v\in H$,\begin{align*}
        K\|u-v\|_He^{\beta\|u\|^2_H} \wedge K\|u-v\|_He^{\beta\|v\|^2_H}  \wedge 1\le \mathbf{1}\{u\neq v\}.
    \end{align*}
    In other words,
    \begin{align*}
        d_{K,\beta}(u,v) \le  \mathbf{1}\{u\neq v\}.
    \end{align*}
    This immediately produces \eqref{ineq:W_(d_K,beta)<W_TV} by virtue of expression \eqref{form:W}.
\end{proof}

\bibliographystyle{abbrv}
{\footnotesize\bibliography{wave-bib}}

\end{document}